\def\R {\mathbb{R}}
\def\D {\mathcal{D}}
\def\N {\mathbb{N}}
\def\eps{\varepsilon}
\def\dist{{\rm dist}}
\newcommand{\cA}{{\mathcal A}}
\newcommand{\cB}{{\mathcal B}}
\newcommand{\cD}{{\mathcal D}}
\newcommand{\cE}{{\mathcal E}}
\newcommand{\cG}{{\mathcal G}}
\newcommand{\cP}{{\mathcal P}}
\newcommand{\cS}{{\mathcal S}}
\newcommand{\cT}{{\mathcal T}}
\newcommand{\be}{\beta}
\newcommand{\la}{\lambda}
\newcommand{\si}{\sigma}
\newcommand{\De}{\Delta}
\newcommand{\loc}{\mathrm{loc}}
\newcommand{\wc}{\rightharpoonup}
\newcommand{\pa}{\partial}
\DeclareMathOperator{\rad}{rad}
\newtheorem{proposition}{Proposition}[section]
\newtheorem{theorem}[proposition]{Theorem}
\newtheorem*{thm A}{Theorem A}
\newtheorem*{theorem*}{Theorem}
\newtheorem{lemma}[proposition]{Lemma}
\theoremstyle{definition}
\newtheorem{definition}[proposition]{Definition}
\newtheorem{remark}[proposition]{Remark}
\numberwithin{equation}{section}
\newenvironment{altproof}[1]
{\noindent
{\em Proof of {#1}}.}
{\nopagebreak\mbox{}\hfill $\Box$\par\addvspace{0.5cm}}
\title[Multiple normalized solutions]{Multiple normalized solutions for a competing system of Schr\"odinger equations}
\author{Thomas Bartsch and Nicola Soave}
\address{
\hbox{\parbox{5.7in}{\medskip\noindent
Thomas Bartsch\\
Mathematisches Institut, Justus-Liebig-Universit\"at Giessen, \\
Arndtstrasse 2, 35392 Giessen (Germany).\\[2pt]
{\em{E-mail address: }}{\tt Thomas.Bartsch@math.uni-giessen.de.} \\ [5pt]
Nicola Soave\\
Dipartimento di Matematica, Politecnico di Milano, \\
Piazza Leonardo da Vinci, 32, 20133 Milano (Italy). \\[2pt]
{\em{E-mail address: }}{\tt nicola.soave@gmail.com, nicola.soave@polimi.it}}}}
\keywords{Elliptic systems, Schr\"odinger systems, Natural constraint, min-max methods.}
\subjclass[2010]{35J50, 35J15, 35J60.}
\thanks{\em{Acknowledgements:} Nicola Soave is partially supported by the project ERC Advanced Grant 2013 n. 339958 ``Complex Patterns for Strongly Interacting Dynamical Systems - COMPAT'', by the PRIN-2015KB9WPT\texttt{\char`_}010 Grant: ``Variational methods, with applications to problems in mathematical physics and geometry", and by the GNAMPA group.}
\begin{document}

\begin{abstract}
We prove the existence of infinitely many solutions $\lambda_1, \lambda_2 \in \R$, $u,v \in H^1(\R^3)$, for the nonlinear Schr\"odinger system
\[
\begin{cases}
-\Delta u - \lambda_1 u = \mu  u^3+ \beta u v^2 & \text{in $\R^3$} \\
-\Delta v- \lambda_2 v =  \mu v^3 +\beta u^2 v & \text{in $\R^3$}\\
u,v>0 & \text{in $\R^3$} \\
\int_{\R^3} u^2 = a^2 \quad \text{and} \quad \int_{\R^3} v^2 = a^2,
\end{cases}
\]
where $a,\mu>0$ and $\beta \le -\mu$ are prescribed. Our solutions satisfy $u\ne v$ so they do not come from a scalar equation. The proof is based on a new minimax argument, suited to deal with normalization conditions.
\end{abstract}

\maketitle

\section{Introduction}

In this paper we consider the stationary nonlinear Schr\"odinger system
\begin{equation}\label{system}
\begin{cases}
-\Delta u - \lambda_1 u = \mu_1  u^3+ \beta u v^2 & \text{in $\R^3$} \\
-\Delta v- \lambda_2 v =  \mu_2 v^3 +\beta u^2 v & \text{in $\R^3$}\\
u,v>0 & \text{in $\R^3$} \\
\int_{\R^3} u^2 = a_1^2 \quad \text{and} \quad \int_{\R^3} v^2 = a_2^2,
\end{cases}
\end{equation}
where $a_1,a_2,\mu_1,\mu_2>0$ and $\beta<0$ are prescribed and $u,v \in H^1(\R^3)$, $\lambda_1, \lambda_2 \in \R$ have to be determined.

This problem possesses many physical motivations, e.g.\ it appears in models for nonlinear optics and Bose-Einstein condensation (we refer to \cite{BaSo} and the references therein for a more exhaustive discussion). Due to the physical background, it seems natural to search for normalized solutions (i.e.\ solutions with prescribed $L^2$-norm), but despite this fact most of the papers regarding \eqref{system} deal with the system with fixed frequencies (i.e.\ $\lambda_1,\lambda_2<0$ are prescribed, and the $L^2$-constraints are neglected), and not much is known about the full problem \eqref{system}. The only results available in the setting considered here are presented in \cite{BaJeSo, BaSo}, where for possibly non-symmetric systems we proved existence of one positive radial normalized solution, both for suitable choices of $\beta > 0$ \cite{BaJeSo}, and for all $\beta<0$ \cite{BaSo}. In this paper we consider the symmetric problem \eqref{system} with $\mu_1=\mu_2$ and $a_1 = a_2$ and, exploiting the symmetry, we prove the existence of infinitely many solutions, which will be found as critical points of the \emph{energy} functional $J_\be: \mathcal{S} \to \R$, defined by
\[
  J_\be(u,v)
   := \frac12\int_{\R^3} |\nabla u|^2 + |\nabla v|^2 -\frac14 \int_{\R^3} \mu_1 u^4  + 2 \beta u^2 v^2 +  \mu_2 v^4,
\]
with
\[
  \mathcal{S} := S_{a_1} \times S_{a_2}, \quad \text{and}
  \quad S_a:= \left\{ w \in H_{\rad}^1(\R^3): \int_{\R^3} w^2 = a^2\right\}.
\]
Here $H^1_{\rad}(\R^3)$ denotes the space of radially symmetric functions in $H^1(\R^3)$. In this perspective, $\lambda_1$ and $\lambda_2$ arise as Lagrange multipliers with respect to the mass constraint. Clearly, if $(u,v,\la_1,\la_2)$ solves \eqref{system} then so does $(v,u,\la_2,\la_1)$.

\begin{theorem}\label{thm: main}
Let $a, \mu>0$, and let us consider system \eqref{system} with $a_1=a_2=a$ and $\mu_1=\mu_2 = \mu$. Then for any $k\in\N$ there exists $\be_k>-\mu$ such that for $\be<\be_k$ the problem \eqref{system} has at least $k$ different pairs $(u_{j,\beta},v_{j,\beta}, \lambda_{1,\beta}^j, \lambda_{2,\beta}^j)$, $(v_{j,\beta},u_{j,\beta}, \lambda_{2,\beta}^j, \lambda_{1,\beta}^j)$, $j=1,\dots,k$, of radial solutions with increasing energy. The solutions satisfy $u_{j,\beta}\ne v_{j,\beta}$, and $J_\beta(u_{j,\beta},v_{j,\beta})\to\infty$ as $j\to\infty$, provided $\be\le-\mu$.
\end{theorem}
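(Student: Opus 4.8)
The plan is to find critical points of $J_\beta$ on $\mathcal S$ by a minimax scheme adapted to the $L^2$-constraint. The first difficulty is that $J_\beta$ is unbounded below on $\mathcal S$ (the quartic term dominates under $L^2$-preserving dilations $w\mapsto s^{3/2}w(s\cdot)$), so one cannot simply minimize. The standard fix in the normalized setting is to pass to the dilation-augmented functional $\widetilde J_\beta(s,u,v):=J_\beta(s\star u, s\star v)$ on $\R\times\mathcal S$, where $s\star w := e^{3s/2}w(e^s\cdot)$, and to look for critical points there; critical points of $\widetilde J_\beta$ are exactly the pairs that are critical for $J_\beta$ constrained to $\mathcal S$ together with the natural Pohozaev-type constraint being automatically satisfied. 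The reason the symmetric case $\mu_1=\mu_2$, $a_1=a_2$ is singled out is the $\Z_2$-symmetry $\tau(u,v):=(v,u)$, under which $J_\beta$ is invariant; so I would work on the fixed-point-free part by restricting attention to a suitable sublevel/neighborhood away from the diagonal $\{u=v\}$, and use an equivariant minimax.

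Concretely, I would proceed as follows. First, fix a family of symmetric minimax classes indexed by $k$: for each $k$ choose a $\Z_2$-invariant reference set — e.g.\ the image of an odd map from $S^{k-1}$ (or a $k$-dimensional symmetric set in $H^1_{\rad}\times H^1_{\rad}$) into $\mathcal S$ — and set $c_k(\beta):=\inf_{A}\max_A \widetilde J_\beta$ over deformations of this class. The monotonicity $c_1(\beta)\le c_2(\beta)\le\cdots$ gives the increasing-energy ordering, and a standard dimension/genus comparison together with the compactness of the embedding $H^1_{\rad}(\R^3)\hookrightarrow L^4(\R^3)$ gives $c_k(\beta)\to\infty$ as $k\to\infty$. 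The second step is a Palais–Smale analysis: along a PS sequence at level $c_k$, the extra $s$-variable controls the scaling, and radial compactness recovers strong convergence in $L^4$; here is where $\beta\le-\mu$ (strong competition) enters, forcing the limit to be nondegenerate and, crucially, forcing $u\ne v$, since on the diagonal the system collapses to the scalar equation with coefficient $\mu+\beta\le0$, which has no positive $L^2$-normalized solution — so the diagonal cannot carry any critical level, and the minimax values are attained off-diagonal. The third step is to extract, for $\beta$ sufficiently negative (depending on $k$), at least $k$ distinct critical points: since $c_1\le\cdots\le c_k$ and each is a genuine (off-diagonal, hence via $\tau$ giving a pair $(u,v)\ne(v,u)$) critical value, a standard argument shows that if several $c_j$ coincide the corresponding critical set has positive genus, still producing infinitely many — in particular at least $k$ — geometrically distinct solutions; this yields the threshold $\beta_k>-\mu$.

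The main obstacle I expect is the Palais–Smale compactness at the minimax levels $c_k$: because the problem is set on all of $\R^3$ and the $L^2$-constraint is non-compact, a PS sequence could lose mass at infinity or split into bumps carrying fractions of the mass $a^2$. Radial symmetry kills translation-invariance and gives compactness of $H^1_{\rad}\hookrightarrow L^p$ for $2<p<6$, which handles the nonlinear terms, but one still must rule out vanishing and control the Lagrange multipliers $\lambda_i$: one needs an a priori bound showing $\lambda_{i}<0$ (so that the limit problem is the right one and the weak limit is nontrivial with full mass). The device that makes this work is precisely the augmented functional — the derivative in the scaling variable $s$ encodes the Pohozaev identity, which, combined with $J_\beta(c_k)$ bounded and $\beta<0$, pins down the scaling and prevents the mass from escaping; making this quantitative, uniformly in $k$, so that the threshold $\beta_k$ is well-defined, is the technical heart of the argument.
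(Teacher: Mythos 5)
Your overall framework (dilation/Pohozaev device, radial compactness, equivariant genus minimax under $\sigma(u,v)=(v,u)$) matches the paper's, but three essential steps are missing or would fail as sketched. First, your mechanism for keeping the critical points off the diagonal only works for $\beta\le-\mu$, where indeed $\{u=v\}\cap\mathcal{P}_\beta=\emptyset$; it does not produce the threshold $\beta_k>-\mu$ claimed in the statement, i.e.\ solutions for a range of $\beta$ \emph{above} $-\mu$ where the diagonal does carry an entire curve of symmetric normalized solutions. The paper's device here is quantitative: the minimax values satisfy $c_{k,\beta}\le C_k$ uniformly in $\beta$ (test sets built from functions with disjoint positivity sets, so the $\beta$-coupling term vanishes), while the infimum of $J_\beta$ on the fixed-point set $\mathcal{P}_\beta^\sigma$ blows up as $\beta\downarrow-\mu$ by Gagliardo--Nirenberg; defining $\beta_k$ by the comparison $c_{k+1,\beta}<m_\beta^\sigma$ is exactly what makes the genus argument legitimate for $-\mu<\beta<\beta_k$ and what forces $u_{j,\beta}\ne v_{j,\beta}$ there. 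Without this comparison your scheme proves nothing for $\beta\in(-\mu,\beta_k)$.

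Second, you misidentify the compactness failure. The scaling variable cannot ``prevent the mass from escaping'': Palais--Smale sequences on $\mathcal{P}_\beta$ at the bottom level $\ell=\inf_{\mathcal{P}_\beta}J_\beta$ genuinely converge (after the Pohozaev normalization) to the semitrivial profiles $(w_0,0)$ or $(0,w_0)$, with one component vanishing in $\mathcal{D}^{1,2}$ while keeping its $L^2$-mass, and these limits are not in $\mathcal{S}$. So the Palais--Smale condition fails precisely at $c=\ell$, and one of the Lagrange multipliers may be $\ge 0$ along such sequences, which also blocks your ``a priori bound $\lambda_i<0$''. The paper copes with this by proving only a partial compactness result (strong convergence whenever $c\ne\ell$, using a Liouville-type lemma and exponential decay to handle the case $\lambda_2\ge0$) and by the separate, crucial lemma that the nonnegative sublevel set $\{J_\beta\le\ell+\delta\}$ in $\mathcal{P}_\beta$ has genus one (it splits into two disjoint closed sets exchanged by $\sigma$, neighborhoods of the two semitrivial profiles), which yields $c_{2,\beta}>\ell$ and lets the multiplicity argument run at levels where compactness holds. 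Your sketch has no counterpart of this step, so even $k=2$ does not follow. Finally, your assertion that $c_k\to\infty$ by a ``standard dimension/genus comparison plus compact embedding'' is unsubstantiated: without a Palais--Smale condition on whole sublevel sets such an argument is not available, and the paper instead proves $J_\beta(u_{k,\beta},v_{k,\beta})\to\infty$ (and only for $\beta\le-\mu$, as stated) via Morse index estimates attached to the minimax characterization, combined with the fact that any strong $H^1$ limit of these critical points would have finite generalized Morse index.
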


We can also show that, for any $k \in \N$ fixed, the family $\{(u_{k,\beta},v_{k,\beta}): \beta \le -\mu\}$ segregates in the limit of strong competition:

\begin{theorem}\label{thm: phase sep}
Let $k \in \N$. As $\beta \to -\infty$, up to a subsequence we have:
\begin{itemize}
\item[($i$)] $(\lambda_{1,\beta}^k,\lambda_{2,\beta}^k) \to (\lambda_1^k,\lambda_2^k)$, with $\lambda_1^k,\lambda_2^k \le 0$;
\item[($ii$)] $(u_{k,\beta},v_{k,\beta}) \to (u_k,v_k)$ in $\mathcal{C}^{0,\alpha}_{\loc}(\R^N)$ and in $H^1_{\loc}(\R^N)$, for any $\alpha \in (0,1)$;
\item[($iii$)] $u_k$ and $v_k$ are nonnegative Lipschitz continuous functions having disjoint positivity sets, in the sense that $u_k v_k \equiv 0$ in $\R^N$;
\item[($iv$)] the difference $u_k- v_k$ is a sign-changing radial solution of
\[
  -\Delta w -\lambda_1^k w^+ +\lambda_2^k w^- = \mu w^3 \qquad \text{in $\R^3$}.
\]
\end{itemize}
\end{theorem}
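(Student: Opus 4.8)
The plan is to obtain a priori bounds on $(u_{k,\beta},v_{k,\beta},\lambda_{1,\beta}^k,\lambda_{2,\beta}^k)$ that are uniform in $\beta\le-\mu$, and then to pass to the limit $\beta\to-\infty$ using the compactness of radial Sobolev embeddings together with the regularity theory for strongly competing systems. For the a priori bounds I would first note that the minimax level $c_{k,\beta}:=J_\beta(u_{k,\beta},v_{k,\beta})$ is bounded above uniformly in $\beta\le-\mu$ (the minimax class of Theorem~\ref{thm: main} is realized by $\beta$-independent competitors, on which $J_\beta$ is controlled). Then, combining the Nehari identity obtained by testing \eqref{system} against $(u_{k,\beta},v_{k,\beta})$, the Pohozaev identity for the system, and the expression for $J_\beta$, one gets the relation $\int_{\R^3}(|\nabla u_{k,\beta}|^2+|\nabla v_{k,\beta}|^2)=6c_{k,\beta}$; with $\|u_{k,\beta}\|_2=\|v_{k,\beta}\|_2=a$ this bounds $(u_{k,\beta},v_{k,\beta})$ in $H^1_{\rad}\times H^1_{\rad}$, bounds $\mu\int(u_{k,\beta}^4+v_{k,\beta}^4)+2\beta\int u_{k,\beta}^2v_{k,\beta}^2$, hence (using $\beta<0$ and Gagliardo--Nirenberg) gives $|\beta|\int_{\R^3}u_{k,\beta}^2v_{k,\beta}^2\le C$, and finally — reading off the multipliers from the scalar equations — $|\lambda_{1,\beta}^k|+|\lambda_{2,\beta}^k|\le C$.

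For $(i)$ I would show $\lambda_{i,\beta}^k\le0$ already for every $\beta\le-\mu$. Suppose $\lambda_{1,\beta}^k>0$. By elliptic regularity $u_{k,\beta},v_{k,\beta}$ are of class $\mathcal{C}^2$, and being radial $H^1(\R^3)$ functions they vanish at infinity, so $\mu u_{k,\beta}^2+\beta v_{k,\beta}^2\to0$ as $|x|\to\infty$ and therefore
\[
  -\Delta u_{k,\beta}=u_{k,\beta}\bigl(\lambda_{1,\beta}^k+\mu u_{k,\beta}^2+\beta v_{k,\beta}^2\bigr)\ge\tfrac12\lambda_{1,\beta}^k\,u_{k,\beta}>0\qquad\text{for }|x|\ge R.
\]
Setting $\tilde u(r)=r\,u_{k,\beta}(r)$ and using $\Delta u=r^{-1}(r u)''$ in the radial variable, this reads $\tilde u''\le-\tfrac12\lambda_{1,\beta}^k\,\tilde u$ on $(R,\infty)$ with $\tilde u>0$; but such a function is strictly concave, hence $\tilde u'\ge0$ (else $\tilde u$ would cross zero), hence $\tilde u\ge\tilde u(R)>0$, hence $\tilde u''\le-c<0$ with $c$ fixed, forcing $\tilde u'\to-\infty$ — a contradiction. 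Thus $\lambda_{1,\beta}^k\le0$, and likewise $\lambda_{2,\beta}^k\le0$; passing to the limit (along a subsequence, $\lambda_{i,\beta}^k\to\lambda_i^k\in\R$) gives $\lambda_1^k,\lambda_2^k\le0$. (Pohozaev and Nehari also give directly $(\lambda_{1,\beta}^k+\lambda_{2,\beta}^k)a^2=-2c_{k,\beta}<0$, but the ODE argument yields each sign separately.)

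For $(ii)$ and $(iii)$ I would pass to the limit. Up to a subsequence $u_{k,\beta}\wc u_k$, $v_{k,\beta}\wc v_k$ in $H^1_{\rad}(\R^3)$; by the compact embeddings $H^1_{\rad}(\R^3)\hookrightarrow L^p(\R^3)$ for $2<p<6$, the convergence is strong in each such $L^p$ and a.e., so $u_{k,\beta}^2v_{k,\beta}^2\to u_k^2v_k^2$ in $L^1$ while $\int u_{k,\beta}^2v_{k,\beta}^2\le C/|\beta|\to0$, giving $u_kv_k\equiv0$. A Brezis--Kato/Moser iteration from the $H^1$ bounds yields $\|u_{k,\beta}\|_\infty+\|v_{k,\beta}\|_\infty\le C$, and then the uniform-in-$\beta$ H\"older estimates for strongly competing systems (in the spirit of Noris--Tavares--Terracini--Verzini and subsequent works) give uniform $\mathcal{C}^{0,\alpha}_{\loc}$ bounds; by Arzel\`a--Ascoli the convergence is $\mathcal{C}^{0,\alpha}_{\loc}$, and testing the equations against localizations of $u_{k,\beta}$, $v_{k,\beta}$ upgrades it to $H^1_{\loc}$, proving $(ii)$. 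The same theory shows that $\beta u_{k,\beta}v_{k,\beta}^2$ and $\beta u_{k,\beta}^2v_{k,\beta}$ are bounded in $L^\infty_{\loc}$, converge to a common nonnegative measure supported on the interface $\partial\{u_k>0\}\cap\partial\{v_k>0\}$, have difference converging to $0$ in $L^1_{\loc}$, and that $w_k:=u_k-v_k$ is locally Lipschitz; together with $u_k,v_k\ge0$ and $u_kv_k\equiv0$ this is $(iii)$. (Near $\{u_k>0\}$ one can also argue by hand: there $u_{k,\beta}\ge c>0$, so $-\Delta v_{k,\beta}\le-\tfrac{c^2}{2}|\beta|\,v_{k,\beta}$ and a barrier forces $v_{k,\beta}\to0$ at a rate exponential in $\beta$, so $\beta v_{k,\beta}^2\to0$ there.)

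Finally, for $(iv)$ I would subtract the two equations in \eqref{system}, test against $\varphi\in C^\infty_c(\R^3)$, and let $\beta\to-\infty$, using $\lambda_{i,\beta}^k\to\lambda_i^k$, the weak $H^1$/strong $L^p_{\loc}$ convergences, $u_{k,\beta}^3-v_{k,\beta}^3\to u_k^3-v_k^3=w_k^3$ (disjoint supports), and $\beta(u_{k,\beta}v_{k,\beta}^2-u_{k,\beta}^2v_{k,\beta})\to0$ in $L^1_{\loc}$; since $w_k^+=u_k$ and $w_k^-=v_k$ this gives $-\Delta w_k-\lambda_1^k w_k^++\lambda_2^k w_k^-=\mu w_k^3$ in $\R^3$, and $w_k\in H^1_{\rad}(\R^3)$ is the desired radial solution. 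Its sign changes because both $u_k$ and $v_k$ are nontrivial, which I would deduce from the fact that the minimax scheme of Theorem~\ref{thm: main} keeps $(u_{j,\beta},v_{j,\beta})$ away from the semitrivial sets $\{u=0\}$, $\{v=0\}$ (and from the diagonal $u=v$), a property that survives the limit because of the uniform $\mathcal{C}^{0,\alpha}_{\loc}$ and $L^p$-in-$\R^3$ bounds and the uniform lower bound $c_{k,\beta}\ge c_k>0$ on the levels. The main obstacle is precisely this passage across the interface: establishing the uniform-in-$\beta$ H\"older and Lipschitz bounds and the consequent $L^1_{\loc}$-cancellation $\beta(u_{k,\beta}v_{k,\beta}^2-u_{k,\beta}^2v_{k,\beta})\to0$, i.e.\ that the segregated pair genuinely solves the free-boundary equation; this is where one must invoke and adapt to the present radial, mass-constrained setting the theory of strongly competing elliptic systems. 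The remaining steps are comparatively routine.
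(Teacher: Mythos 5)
Your outline follows the same route as the paper's proof, which for Theorem~\ref{thm: phase sep} is literally a reference to Section 3.4 of \cite{BaSo}: uniform-in-$\beta$ bounds on the levels, on the $H^1$-norms, on the competition term $|\beta|\int u_{k,\beta}^2v_{k,\beta}^2$ and on the multipliers, then the uniform H\"older/segregation theory for strongly competing systems, and finally passage to the limit in the difference of the two equations. Several of your steps are even easier in the paper's framework: the solutions of Theorem~\ref{thm: main} come from Proposition~\ref{prop: PS cond} with $c\neq\ell$, so $\lambda_{1,\beta}^k,\lambda_{2,\beta}^k<0$ for every $\beta$ and your ODE argument is not needed, while the $\beta$-independent bound $c_{k,\beta}<C_k$ is exactly Lemma~\ref{lem: c_k finite}; your identities $\int_{\R^3}\bigl(|\nabla u_{k,\beta}|^2+|\nabla v_{k,\beta}|^2\bigr)=6c_{k,\beta}$, $(\lambda_{1,\beta}^k+\lambda_{2,\beta}^k)a^2=-2c_{k,\beta}$ and the bound $|\beta|\int_{\R^3}u_{k,\beta}^2v_{k,\beta}^2\le C$ are correct.

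The one genuine gap is your justification that the limit is sign-changing, i.e.\ that neither $u_k$ nor $v_k$ vanishes. The reason you give --- that the minimax scheme keeps $(u_{k,\beta},v_{k,\beta})$ away from the semitrivial sets and that this ``survives the limit'' thanks to the uniform bounds and $c_{k,\beta}\ge c_k>0$ --- is not established anywhere and is not obviously true: no uniform-in-$\beta$ distance from $\{u=0\}\cup\{v=0\}$ is proved in the paper, mass may leak to spatial infinity (the limit pair solves the equation \emph{without} the constraint $\int u_k^2=a^2$), and a level bounded below by $\ell>0$ does not by itself exclude a semitrivial limit profile, since Palais--Smale sequences at level $\ell$ do converge to $(w_0,0)$ modulo the constraint. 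The fix is elementary and uniform in $\beta$: testing the first equation with $u_{k,\beta}$ and using $\lambda_{1,\beta}^k\le0$, $\beta<0$ gives
\[
\int_{\R^3}|\nabla u_{k,\beta}|^2\;\le\;\mu\int_{\R^3}u_{k,\beta}^4\;\le\;C\,a\left(\int_{\R^3}|\nabla u_{k,\beta}|^2\right)^{3/2}
\]
by Gagliardo--Nirenberg, whence $\int_{\R^3}|\nabla u_{k,\beta}|^2\ge c>0$ and therefore $\int_{\R^3}u_{k,\beta}^4\ge c/\mu>0$ with $c$ independent of $\beta$; since $H^1_{\rad}(\R^3)\hookrightarrow L^4(\R^3)$ is compact, the global $L^4$-limit coincides with $u_k$ and so $\int_{\R^3}u_k^4>0$, and the same holds for $v_k$. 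With this replacement your argument matches the blueprint of \cite{BaSo}, Section 3.4, modulo the invocation of the uniform H\"older and Lipschitz estimates for strongly competing systems, which is also exactly what the paper relies on.
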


\begin{remark}
The scalar problem
\begin{equation}\label{syst scalar}
\begin{cases}
  -\De w -\la w = \mu w^3 & \text{in $\R^3$} \\
  w \in S_a
\end{cases} \quad \text{for some $\la <0$};
\end{equation}
has a unique positive radial solution $w_0\in H^1_{\rad}(\R^3)$. Setting
\[
  w_\be(x):=\left(\frac{\mu}{\mu+\be}\right)^{\frac32}w_0\left(\frac{\mu}{\mu+\be}x\right),\quad
  \la_\be:=\left(\frac{\mu}{\mu+\be}\right)^2,
\]
for $\be>-\mu$ we obtain a smooth curve
\[
  \cT:=\left\{(w_\be,w_\be,\la_\be,\la_\be):\be>-\mu\right\}
\]
of symmetric solutions of \eqref{system}. This suggests that the solutions in Theorem~\ref{thm: main} bifurcate from $\cT$ as in \cite{BaDaWa}. We do not pursue this approach here.
\end{remark}

In what follows we recall basic facts concerning the existence of normalized solutions for nonlinear Schr\"odinger equations in $\R^N$ and describe the strategy of the proof of Theorem \ref{thm: main}, emphasizing the main differences with respect to the results already present in the literature. This serves also as motivation to our study.

The homogeneous nonlinear Schr\"odinger equation with normalization constraint is
\begin{equation}\label{eq intro}
  -\Delta w -\lambda w = |w|^{p-2} w \qquad \text{in $\R^N$}, \quad \int_{\R^N} w^2 = a^2.
\end{equation}
It is well known that two exponents play a special role for existence and properties of the solutions: in addition to the \emph{Sobolev critical} exponent $p=2N/(N-2)$, we have the \emph{$L^2$-critical} one $p=2+4/N$. If $2<p<2+4/N$ ($L^2$-subcritical regime), then the energy functional associated to \eqref{eq intro} is bounded from below on the $L^2$-sphere $S_a$, while if $p \ge 2+4/N$ ($L^2$-critical or supercritical regime) this is not true and one is forced to search for critical points that are not global minima. The critical Sobolev exponent defines the threshold for the existence of a $H^1$-solution. The very same discussion applies to systems of type \eqref{system}, and this is why our results concern the space dimension $N=3$: since we are considering cubic nonlinearities, in dimension $N=1,2,3$ or $4$ we have respectively a $L^2$-subcritical, $L^2$-critical, $L^2$-supercritical and Sobolev-subcritical, or Sobolev-critical setting, and each framework requires its own techniques. With regard to this, we mention that while for $L^2$-subcritical problems many results are available (see e.g. \cite{Stu1, Stu2, Lio1,Lio2} for equations and \cite{Caoetal,GuoJea, NguWan} for systems), the $L^2$-critical or supercritical ones are much less understood, and we refer to \cite{Je, BaDVa} for equations and to \cite{BaJeSo, BaJe, BaSo} for systems.

Let us focus now on system \eqref{system} in the symmetric case $a_1=a_2$ and $\mu_1=\mu_2$. Since the problem is invariant both under rotations, and with respect to the involution $\sigma:(u,v) \mapsto (v,u)$, it is natural to adapt the Krasnoselskii genus approach to the constrained functional $J_\beta|_{\mathcal{S}}$. This is the strategy used in \cite{NoTaTeVeJEMS}, where the authors considered normalized solution to \eqref{system} in the case when $\mu<0$ and $\R^3$ is replaced by a bounded domain $\Omega$ (with homogeneous Dirichlet boundary conditions). In such a situation the functional is bounded from below, coercive, and satisfies the Palais-Smale condition on the product of the $L^2$-spheres\footnote{Indeed, in \cite{NoTaTeVeJEMS} the authors exploit a uniform-in-$\beta$ Palais-Smale condition to derive the convergence of the whole minimax structure to a limit problem.}. All these properties, which are essential to use the Krasnoselskii genus, fail when considering \eqref{system} in $\R^3$ with $\mu>0$: the functional $J_\beta$ is indeed unbounded both from above and from below on $\cS$, and the Palais-Smale condition is not satisfied. In \cite{DaWeWe}, where system \eqref{system} is studied in the case of fixed frequencies $\lambda_1,\lambda_2<0$, the same complications are overcome with the introduction of a Nehari-type manifold $\mathcal{N}_\beta$ associated to the problem. The authors proved that the constrained functional $J_\beta|_{\mathcal{N}_\beta}$ is bounded from below, coercive, and satisfies the Palais-Smale condition.

Searching for normalized solutions the Nehari manifold is not available, but in \cite{BaSo} we introduced a different additional constraint, suited to treat problems with normalization conditions:
\[
  \cP_\be := \left\{ (u,v) \in \mathcal{S}:
      \int_{\R^3} |\nabla u|^2 + |\nabla v|^2 = \frac34 \int_{\R^3} \mu_1 u^4  + 2\beta u^2 v^2 + \mu_2 v^4 \right\}.
\]
This is a $\mathcal{C}^2$ submanifold of $\mathcal{S}$, see \cite[Lemma 2.2]{BaSo}\footnote{In \cite{BaSo} we showed that $\cP_\be$ is a $\mathcal{C}^1$ manifold since this was enough for our purpose, the extra regularity is straightforward.}, and using the Pohozaev identity it is easy to check that any weak solution to \eqref{system} stays in $\cP_\be$. The manifold $\cP_\be$ is a \emph{natural constraint}, i.e.\ if $(u,v)\in\cP_\be$ is a critical point of $J|_{\cP_\be}$ then it is a critical point of $J$ on $\cS$. In \cite[Theorem 2.1]{BaSo}, we also stated that a constrained Palais-Smale sequences for $J_\beta$ on $\cP_\be$ gives rise to a ``free" Palais-Smale sequences for $J_\beta$ on $\cS$; however, the proof of \cite[Theorem 2.1]{BaSo}, and analogously the one of \cite[Theorem 4.1]{BaSo}, contains a gap. In the present paper we show that a minimax value for the constrained functional $J|_{\cP_\be}$ yields a Palais-Smale sequence for $J|_\cS$ consisting of elements in $\cP_\be$. This is slightly weaker than \cite[Theorem 4.1]{BaSo} but sufficient for our purposes here, and also for the main results from \cite{BaSo}.

We need some notation first. Let $X \subset H^1_{\rad}(\R^3,\R^2)$ and, as above, let $\sigma(u,v) = (v,u)$. A set $A \subset X$ is $\sigma$-invariant if $\sigma(A) = A$; similarly, a function $f:X \to \R$ is called $\sigma$-invariant if $f(\sigma(u,v)) = f(u,v)$ for every $(u,v)$. A continuous function $h:X \to X$ is $\sigma$-equivariant if $h(\sigma(u,v)) = \sigma(h(u,v))$. A homotopy $\eta:[0,1] \times X \to X$ is $\sigma$-equivariant if $\eta(t,\cdot)$ is $\sigma$-equivariant for any $t \in [0,1]$.

Notice that both $J_\be$ and $\cP_\be$ are $\sigma$-invariant, under the assumption $a_1=a_2$ and $\mu_1=\mu_2$.

\begin{definition}\label{def hsf equi}
Let $B$ be a closed $\si$-invariant subset of $X$. We say that a class $\mathcal{F}$ of compact subsets of $X$ is a \emph{$\sigma$-homotopy stable family with closed boundary $B$} provided:
\begin{itemize}
\item[($a$)] every set in $\mathcal{F}$ contains $B$.
\item[($b$)] for any $A \in \mathcal{F}$ and any $\sigma$-equivariant homotopy $\eta \in C([0,1] \times X,X)$ satisfying $\eta(t,x) = x$ for all $(t,x) \in (\{0\} \times X) \cup ([0,1] \times B)$, we have that $\eta(\{1\} \times A) \in \mathcal{F}$.
\item[($c$)] every set in $\mathcal{F}$ is $\sigma$-invariant.
\end{itemize}
\end{definition}

\begin{theorem}\label{thm:PSequi}
Let $\beta \in \R$, $a_1=a_2$, $\mu_1=\mu_2$, and let $\mathcal{F}$ be a $\sigma$-homotopy stable family of compact subsets of $\mathcal{P}_\beta$, with closed boundary $B \subset \mathcal{P}_\beta$. Let
\[
c_{\mathcal{F},\beta}:= \inf_{A \in \mathcal{F}} \, \max_{(u,v) \in A} J_\beta(u,v)
\]
Suppose that
\begin{equation}\label{hp minmax equi}
\text{$B$ is contained in a connected component of $\mathcal{P}_\beta$},
\end{equation}
and that
\begin{equation}\label{hp minmax2 equi}
\max\{\sup J_\beta(B),0\} <  c_{\mathcal{F},\beta} < +\infty.
\end{equation}
Then there exists a sequence $\{(u_n,v_n)\}$ with the following properties:
\begin{itemize}
\item[($i$)] $(u_n,v_n) \in \mathcal{P}_\beta$ for every $n$;
\item[($ii$)] $J_\beta(u_n,v_n) \to c_{\mathcal{F},\beta}$ as $n \to \infty$;
\item[($iii$)] $\|\nabla (J_\beta|_{\cS})(u_n,v_n)\| \to 0$ as $n \to \infty$, i.e. $\{(u_n,v_n)\}$ is a Palais-Smale sequence for $J_\beta$ on $\cS$.
\end{itemize}
If moreover we can find a minimizing sequence $\{A_n\}$ for $c_{\mathcal{F},\beta}$ in such a way that $(u,v) \in A_n$ implies $u,v \ge 0$ a.e., then we can find the sequence $\{(u_n,v_n)\}$ satisfying the additional condition
\begin{itemize}
\item[($iv$)] $u_{n}^-, v_{n}^- \to 0$ a.e. in $\R^3$ as $n \to \infty$.
\end{itemize}
\end{theorem}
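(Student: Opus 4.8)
The plan is to trade the constrained minimax of $J_\beta$ on the manifold $\mathcal{P}_\beta$ for a minimax of a genuinely $C^1$ functional defined on an open subset of $\mathcal{S}$, on which the $L^2$-preserving dilations act smoothly; this is what legitimately replaces the ill-defined ``differentiation in the dilation direction'' behind the gap in \cite{BaSo}. For $s\in\R$ set $s\star(u,v):=\bigl(e^{3s/2}u(e^{s}\cdot),\,e^{3s/2}v(e^{s}\cdot)\bigr)$, $A(u,v):=\int_{\R^3}|\nabla u|^2+|\nabla v|^2$, $B(u,v):=\int_{\R^3}\mu_1u^4+2\beta u^2v^2+\mu_2v^4$, so that $J_\beta(s\star(u,v))=\tfrac12e^{2s}A-\tfrac14e^{3s}B$ and $(u,v)\in\mathcal{P}_\beta\Leftrightarrow A=\tfrac34B$. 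On the open, $\sigma$-invariant set $\mathcal{S}^+:=\{(u,v)\in\mathcal{S}:B(u,v)>0\}\supseteq\mathcal{P}_\beta$ define
\[
  \Phi_\beta(u,v):=\max_{s\in\R}J_\beta\bigl(s\star(u,v)\bigr)=\frac{8}{27}\,\frac{A(u,v)^{3}}{B(u,v)^{2}},
  \qquad
  m_\beta(u,v):=\Bigl(\log\tfrac{4A(u,v)}{3B(u,v)}\Bigr)\star(u,v).
\]
Then $\Phi_\beta\in C^{1}(\mathcal{S}^+)$ is $\sigma$-invariant and dilation-invariant ($\Phi_\beta(s\star(u,v))=\Phi_\beta(u,v)$), $m_\beta\colon\mathcal{S}^+\to\mathcal{P}_\beta$ is a continuous $\sigma$-equivariant retraction with $\Phi_\beta=J_\beta\circ m_\beta$ and $\Phi_\beta|_{\mathcal{P}_\beta}=J_\beta|_{\mathcal{P}_\beta}$, and a one-line computation from the explicit formula (using $A=\tfrac34B$ on $\mathcal{P}_\beta$) gives the identity of differentials $d\Phi_\beta(u,v)=dJ_\beta(u,v)$ for all $(u,v)\in\mathcal{P}_\beta$, hence $\nabla(\Phi_\beta|_{\mathcal{S}})(u,v)=\nabla(J_\beta|_{\mathcal{S}})(u,v)$ there. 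Since $A>0$ on $\mathcal{S}$ and $A,B$ are continuous, every sublevel set $\{\Phi_\beta\le M\}$ is closed in $\mathcal{S}$, hence complete.

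Next I lift $\mathcal{F}$ to $\mathcal{S}^+$: let $\widehat{\mathcal{F}}$ be the collection of all $\eta(\{1\}\times A)$ with $A\in\mathcal{F}$ and $\eta$ a $\sigma$-equivariant homotopy of $\mathcal{S}^+$ satisfying $\eta(0,\cdot)=\mathrm{id}$ and $\eta(t,\cdot)|_B=\mathrm{id}$. Then $\widehat{\mathcal{F}}$ is a $\sigma$-homotopy stable family of compact subsets of $\mathcal{S}^+$ with closed boundary $B$, and $\widehat c:=\inf_{\widehat A\in\widehat{\mathcal{F}}}\max_{\widehat A}\Phi_\beta=c_{\mathcal{F},\beta}$: the inequality $\le$ comes from $\eta=\mathrm{id}$ together with $\Phi_\beta|_{\mathcal{P}_\beta}=J_\beta|_{\mathcal{P}_\beta}$, while for $\ge$ one notes that $m_\beta(\widehat A)=\bigl(m_\beta\circ\eta(1,\cdot)|_{\mathcal{P}_\beta}\bigr)(A)$ lies in $\mathcal{P}_\beta$, belongs to $\mathcal{F}$ (because $m_\beta\circ\eta(t,\cdot)|_{\mathcal{P}_\beta}$ is an admissible $\sigma$-equivariant homotopy of $\mathcal{P}_\beta$ fixing $B$, using $m_\beta|_{\mathcal{P}_\beta}=\mathrm{id}$), and satisfies $\max_{m_\beta(\widehat A)}J_\beta=\max_{\widehat A}(J_\beta\circ m_\beta)=\max_{\widehat A}\Phi_\beta\ge c_{\mathcal{F},\beta}$.

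Now I run the equivariant minimax for $\Phi_\beta|_{\mathcal{S}}$ at the level $\widehat c=c_{\mathcal{F},\beta}$. By \eqref{hp minmax2 equi}, i.e.\ $\max\{\sup\Phi_\beta(B),0\}<\widehat c<+\infty$, near-minimisers of $\widehat c$ stay in the complete set $\{\Phi_\beta\le\widehat c+1\}$, and a standard $\Z_2$-equivariant deformation argument (a $\sigma$-invariant locally Lipschitz pseudo-gradient for $\Phi_\beta|_{\mathcal{S}}$, localised in $\{\widehat c-\delta\le\Phi_\beta\le\widehat c+\delta\}$ so as to be the identity near $B$) shows: if there were no Palais--Smale sequence for $\Phi_\beta|_{\mathcal{S}}$ at level $\widehat c$, one could push a near-minimiser $\widehat A\in\widehat{\mathcal{F}}$ to a set in $\widehat{\mathcal{F}}$ with $\max\Phi_\beta<\widehat c$, contradicting the definition of $\widehat c$. (Hypothesis \eqref{hp minmax equi} enters here, where the abstract minimax needs $B$ in one component.) Performed along a minimising sequence $\{A_n\}\subset\mathcal{F}$, whose sets lie in $\mathcal{P}_\beta$, the quantitative form of the argument moreover yields a Palais--Smale sequence $(\tilde u_n,\tilde v_n)\in\mathcal{S}^+$ with $\Phi_\beta(\tilde u_n,\tilde v_n)\to c_{\mathcal{F},\beta}$, $\|\nabla(\Phi_\beta|_{\mathcal{S}})(\tilde u_n,\tilde v_n)\|\to0$, and $\dist_{H^1}\bigl((\tilde u_n,\tilde v_n),A_n\bigr)\to0$, hence $\dist_{H^1}\bigl((\tilde u_n,\tilde v_n),\mathcal{P}_\beta\bigr)\to0$; in particular $s_n:=\log\frac{4A(\tilde u_n,\tilde v_n)}{3B(\tilde u_n,\tilde v_n)}\to0$, and if the $A_n$ can be taken with nonnegative components, also $\|(\tilde u_n)^-\|_{L^2},\|(\tilde v_n)^-\|_{L^2}\to0$.

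Finally set $(u_n,v_n):=m_\beta(\tilde u_n,\tilde v_n)=s_n\star(\tilde u_n,\tilde v_n)\in\mathcal{P}_\beta$, which is ($i$). By dilation-invariance $J_\beta(u_n,v_n)=\Phi_\beta(\tilde u_n,\tilde v_n)\to c_{\mathcal{F},\beta}$, which is ($ii$). For ($iii$), differentiating $\Phi_\beta(s\star w)=\Phi_\beta(w)$ in $w$ gives $d\Phi_\beta(u_n,v_n)[\zeta]=d\Phi_\beta(\tilde u_n,\tilde v_n)[(-s_n)\star\zeta]$ for $\zeta\in T_{(u_n,v_n)}\mathcal{S}$, with $(-s_n)\star\zeta\in T_{(\tilde u_n,\tilde v_n)}\mathcal{S}$ and $\|(-s_n)\star\zeta\|_{H^1}^2\le\max\{e^{-2s_n},1\}\|\zeta\|_{H^1}^2$; since $s_n\to0$ this gives $\|\nabla(\Phi_\beta|_{\mathcal{S}})(u_n,v_n)\|\le(1+o(1))\|\nabla(\Phi_\beta|_{\mathcal{S}})(\tilde u_n,\tilde v_n)\|\to0$, and as $(u_n,v_n)\in\mathcal{P}_\beta$ the identity $d\Phi_\beta=dJ_\beta$ on $\mathcal{P}_\beta$ turns this into $\|\nabla(J_\beta|_{\mathcal{S}})(u_n,v_n)\|\to0$, which is ($iii$). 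For ($iv$), the dilation commutes with taking negative parts and preserves $L^2$-norms, so $\|u_n^-\|_{L^2}=\|(\tilde u_n)^-\|_{L^2}\to0$ and likewise for $v_n$, whence ($iv$) after passing to a subsequence.

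The step I expect to be most delicate is the first one: pinning down the \emph{smooth} functional $\Phi_\beta$ on the open neighbourhood $\mathcal{S}^+$ of $\mathcal{P}_\beta$ whose restriction to $\mathcal{P}_\beta$ is $J_\beta|_{\mathcal{P}_\beta}$ and whose differential coincides with $dJ_\beta$ along $\mathcal{P}_\beta$ — this is precisely the device that bypasses the non-differentiability of the dilations on $H^1$. The remaining points — that $\widehat{\mathcal{F}}$ carries exactly the value $c_{\mathcal{F},\beta}$, and that the equivariant minimax and its deformation can be run inside the non-complete set $\mathcal{S}^+$ while fixing $B$ — are handled by the retraction $m_\beta$ and by the a priori control in \eqref{hp minmax2 equi}--\eqref{hp minmax equi}.
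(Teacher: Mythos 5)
Your proposal is correct and follows essentially the same route as the paper: your $\Phi_\beta$ and $m_\beta$ are exactly the paper's auxiliary functional $E_\beta$ from \eqref{expE} and the retraction $(u,v)\mapsto s^\beta_{(u,v)}\star(u,v)$, your lifted family $\widehat{\mathcal{F}}$ with $\widehat{c}=c_{\mathcal{F},\beta}$ is the paper's class $\mathcal{G}$ with $e_{\mathcal{G}}=c_{\mathcal{F}}$, and the localized equivariant minimax followed by the rescaling $(u_n,v_n)=s_n\star(\tilde u_n,\tilde v_n)$ reproduces Proposition \ref{prop: PS} and the proof of Theorem \ref{thm:PS} (with \cite[Theorem 7.2]{Ghou} in place of \cite[Theorem 3.2]{Ghou}). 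The only step stated too quickly is the implication that $\dist_{H^1}\bigl((\tilde u_n,\tilde v_n),A_n\bigr)\to 0$ forces $s_n\to 0$: this also uses that the nearby points of $A_n\subset\mathcal{P}_\beta$ have $\mathcal{D}^{1,2}$-norms bounded above (by the finite minimax level) and below (Gagliardo--Nirenberg, the paper's Lemma \ref{minP}), both of which hold in your setup, so this is a missing line rather than a gap.
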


Here and in the rest of the paper, $\|\cdot\|$ denotes the $H^1(\R^3,\R^2)$ norm. We explicitly observe that $B=\emptyset$ is admissible, it is sufficient to adopt the usual convention $\sup(\emptyset) = -\infty$.

Theorem \ref{thm:PSequi} establishes that, if the assumptions of the equivariant minimax principle \cite[Theorem 7.2]{Ghou} are satisfied by the constrained functional $J_\be|_{\mathcal{P}_\be}$, then we can find a ``free" Palais-Smale sequence for $J_\be$ on $\mathcal{S}$, made of elements of $\cP_\beta$. The advantage of working with the constrained functional $J_\beta|_{\cP_\beta}$ stays in the fact that it has much better properties than $J_\beta|_{\cS}$: indeed, $J_\beta|_{\cP_\beta}$ is bounded from below and coercive. On the other hand, differently to what happen in the fixed frequency case \cite{DaWeWe}, the Palais-Smale condition is not satisfied on $\cP_\be$. This is a phenomenon purely related to the normalization conditions, and is motivated by the lack of compactness of the embedding $H^1_{\rad}(\R^3) \hookrightarrow L^2(\R^3)$. In particular, there exist Palais-Smale sequences for $J_\be|_{\cP_\be}$ converging weakly in $H^1(\R^3)$ (actually strongly in $\mathcal{D}^{1,2}(\R^3)$) to \emph{semi-trivial} bound states $(w,0)$ and $(0,w)$, where $w$ is a radial solution to \eqref{syst scalar}; we stress that $(w,0)$ and $(0,w)$ do not stay on $\cS$, hence not on $\cP_\be$.

As further complication, we observe that since $\lambda_1,\lambda_2$ are not prescribed and could be non-negative, the operators $-\Delta -\lambda_i$ could be both non-positive and non-invertible. This is particularly relevant here since we are interested in positive solutions, and not knowing the sign of $\lambda_i$ we cannot argue as in \cite{DaWeWe}, where the authors simply replaced the nonlinearity $\mu u^3$ with $\mu (u^+)^3$ in \eqref{system}, found non-trivial solutions of the new problem, and applied the maximum principle to obtain positive solutions of the original system.

In light of the previous discussion, for the proof of Theorem \ref{thm: main} we shall considerably refine the Krasnoselskii genus approach for $J_\beta|_{\cP_\beta}$, and in particular a careful analysis of the behavior of Palais-Smale sequences is needed.

\begin{remark}
An analogue of Theorem \ref{thm:PSequi} holds also in the non-symmetric setting, see Theorem \ref{thm:PS} in Section \ref{sec: nat}.
\end{remark}

\medskip

\noindent \textbf{Structure of the paper.} The proof of Theorem \ref{thm:PSequi} is the object of Section \ref{sec: nat} where we also treat the non-symmetric version \ref{thm:PS}.

Afterwards, we proceed with the proof of Theorem \ref{thm: main}. From Section \ref{sec: PS cond} on we always focus on the symmetric case $\mu_1=\mu_2=\mu$, $a_1=a_2=a$. In order to simplify some expressions and computations, we shall consider the case $\mu=1$, without loss of generality. We shall first show in Section \ref{sec: PS cond} that specially constructed Palais-Smale sequences do converge. In Section \ref{sec: many PS} we set up a minimax scheme producing infinitely many such sequences. Sections \ref{sec: prel} and \ref{sec: energy} are devoted to some auxiliary facts, and in Section \ref{sec: end} we complete the proofs of our main result.


\medskip

To conclude the introduction, we mention that both \eqref{eq intro} and system \eqref{system} are studied also in bounded domains. When dealing with fixed frequencies, the search for radial solutions in $\R^N$, or for solutions satisfying homogeneous Dirichlet boundary conditions in $\Omega$ bounded, can be treated essentially in the same way. It is remarkable that, on the contrary, searching for normalized solutions the two problems ``$\R^N$" vs. ``bounded domains" presents substantial differences. In order to better understand this aspect, we invite the reader to compare the results and the techniques used here and in \cite{BaDVa, BaJeSo, BaSo, Je} in $\R^N$, and those in \cite{FibMer, NoTaVe2, NoTaVe, PiVe} in bounded domains.

\section{Notation and preliminaries}\label{sec: prel}

We always work in the whole space $\R^3$, and hence we often write $H^1$ instead of $H^1(\R^3)$ and $H^1(\R^3,\R^2)$, to keep the notation as compact as possible. The same discussion applies to all the functional spaces we shall use. Regarding this, we recall that $\mathcal{D}^{1,2}(\R^3)$ denotes the closure of $\mathcal{C}^\infty_c(\R^3)$ with respect to norm $\|w\|_{\mathcal{D}^{1,2}}:= \|\nabla w\|_{L^2}$. We denote by $\|\cdot\|$ the $H^1(\R^3,\R^2)$ norm, and sometimes also the $H^1(\R^3)$ norm. In the same spirit, the symbol $\|\cdot\|_{\mathcal{D}^{1,2}}$ will often be used for also for the $\mathcal{D}^{1,2}(\R^3,\R^2)$.

For $s \in \R$ and $w \in H^1(\R^3)$, we define the function
\[
  (s \star w)(x) := e^{3s/2} w(e^s x).
\]
One can easily check that $\|s \star w\|_{L^2} = \|w\|_{L^2}$ for every $s \in \R$. As a consequence, given $(u,v) \in \cS$, it results that $s \star (u,v)  := (s \star u, s \star v) \in \cS$ as well, for every $s \in \R$.

We consider the real valued function $\Psi_{(u,v)}^\beta(s):= J_\beta(s \star (u,v))$. By changing variables in the integrals, we obtain
\begin{equation}\label{explicit expression}
  \Psi^\beta_{(u,v)}(s)
    = \frac{e^{2s}}{2}\int_{\R^3} |\nabla u|^2 + |\nabla v|^2 - \frac{e^{3s}}{4}\int_{\R^3} \mu_1 u^4  + 2 \be u^2 v^2 + \mu_2 v^4.
\end{equation}
Let us introduce
\[
  \cE_\be
    := \left\{(u,v) \in \cS:  \int_{\R^3} \mu_1 u^4 + 2 \beta u^2 v^2 + \mu_2 v^4 > 0\right\}.
\]
Clearly $\cE_\be = \cS$ in case $- \sqrt{\mu_1 \mu_2} < \beta < +\infty$, while for $\beta \le - \sqrt{\mu_1 \mu_2}$ it results that $\cE_\be \subset \cS$ is an open subset with strict inclusion. By definition and using \eqref{explicit expression}, it is not difficult to check that:

\begin{lemma}\label{lem: star}
For any $(u,v) \in \cS$, a value $s \in \R$ is a critical point of $\Psi^\beta_{(u,v)}$ if and only if $s \star (u,v) \in \mathcal{P}_\beta$. It results that:
\begin{itemize}
\item[($i$)] If $(u,v) \in \cE_\be$, then there exists a unique critical point $s_{(u,v)}^\beta \in \R$ for $\Psi^\beta_{(u,v)}$, which is a strict maximum point, and is defined by
\begin{equation}\label{def s_u}
  \exp(s_{(u,v)}^\be) = \frac{4 \int_{\R^3} |\nabla u|^2 + |\nabla v|^2}{3 \int_{\R^3} \mu_1 u^4 + 2 \be u^2 v^2 + \mu_2 v^4 }.
\end{equation}
In particular, if $(u,v) \in \mathcal{P}_\beta$, then $s^\beta_{(u,v)}=0$.
\item[($ii$)] If $(u,v) \not \in \cE_\be$, then $\Psi^\beta_{(u,v)}$ has no critical point in $\R$.
\end{itemize}
\end{lemma}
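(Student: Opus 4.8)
The plan is to read everything off the explicit formula \eqref{explicit expression} for $\Psi^\be_{(u,v)}$, reducing the statement to elementary one-variable calculus. Write $A := \int_{\R^3}|\nabla u|^2 + |\nabla v|^2$ and $B := \int_{\R^3}\mu_1 u^4 + 2\be u^2 v^2 + \mu_2 v^4$, so that $\Psi^\be_{(u,v)}(s) = \tfrac12 e^{2s}A - \tfrac14 e^{3s}B$. Differentiating gives $\frac{d}{ds}\Psi^\be_{(u,v)}(s) = e^{2s}A - \tfrac34 e^{3s}B = e^{2s}\bigl(A - \tfrac34 e^{s}B\bigr)$, and since $e^{2s}>0$ the critical points of $\Psi^\be_{(u,v)}$ are exactly the solutions of $A = \tfrac34 e^{s}B$. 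Here I would also record that $A>0$ for every $(u,v)\in\cS$: since $a_1,a_2>0$ the components $u,v$ are nonzero elements of $H^1(\R^3)$, hence not constant, so $\int|\nabla u|^2 + |\nabla v|^2 >0$.

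Next I would translate membership in $\cP_\be$ into the same equation. By the change of variables $y=e^s x$ one has $\int_{\R^3}|\nabla(s\star w)|^2 = e^{2s}\int_{\R^3}|\nabla w|^2$, $\int_{\R^3}(s\star w)^4 = e^{3s}\int_{\R^3} w^4$, and likewise $\int_{\R^3}(s\star u)^2(s\star v)^2 = e^{3s}\int_{\R^3}u^2 v^2$. Hence the defining identity of $\cP_\be$ evaluated at $s\star(u,v)$ reads $e^{2s}A = \tfrac34 e^{3s}B$, that is $A = \tfrac34 e^{s}B$. Comparing with the previous paragraph, $s$ is a critical point of $\Psi^\be_{(u,v)}$ if and only if $s\star(u,v)\in\cP_\be$, which is the first assertion of the lemma.

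For ($i$), if $(u,v)\in\cE_\be$ then $B>0$, so $s\mapsto A-\tfrac34 e^{s}B$ is strictly decreasing, positive for $s\to-\infty$ and negative for $s\to+\infty$, hence vanishes at exactly one point, the $s$ with $e^{s}=\tfrac{4A}{3B}$, which is precisely \eqref{def s_u}. It follows that $\Psi^\be_{(u,v)}$ is strictly increasing on $(-\infty,s^\be_{(u,v)})$ and strictly decreasing afterwards, so $s^\be_{(u,v)}$ is its strict global maximum (equivalently, at the critical point $(\Psi^\be_{(u,v)})''(s)=2e^{2s}A-\tfrac94 e^{3s}B=-e^{2s}A<0$ after substituting $e^{3s}B=\tfrac43 e^{2s}A$, and uniqueness then upgrades the local maximum to a global one). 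If in addition $(u,v)\in\cP_\be$, then $A=\tfrac34 B$ forces $e^{s^\be_{(u,v)}}=1$, i.e.\ $s^\be_{(u,v)}=0$. For ($ii$), if $(u,v)\notin\cE_\be$ then $B\le 0$, whence $\frac{d}{ds}\Psi^\be_{(u,v)}(s)=e^{2s}(A-\tfrac34 e^{s}B)\ge e^{2s}A>0$ for every $s\in\R$, so $\Psi^\be_{(u,v)}$ is strictly monotone and has no critical point.

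There is no real obstacle in this proof: once \eqref{explicit expression} is available it is a direct computation, and the change-of-variables identities for the $\star$-action are routine. The only point that genuinely deserves a line of justification is the elementary fact $A>0$ on $\cS$, which is what makes the dichotomy in ($i$)–($ii$) clean (it rules out the degenerate case $A=0$, where $\Psi^\be_{(u,v)}$ could be constant or have no critical point for a different reason).
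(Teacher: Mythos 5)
Your proof is correct and proceeds exactly as the paper intends: the paper gives no written proof (it states the lemma as "not difficult to check" from \eqref{explicit expression}), and your argument—differentiating the explicit formula, matching the critical-point equation with the defining identity of $\mathcal{P}_\beta$ via the scaling identities, and the elementary case analysis on the sign of $\int \mu_1 u^4 + 2\beta u^2v^2 + \mu_2 v^4$—is the standard verification, with the useful extra remark that $\int|\nabla u|^2+|\nabla v|^2>0$ on $\cS$.
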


%

\section{Proof of Theorem \ref{thm:PS}}\label{sec: nat}

For the sake of generality and applications elsewhere, we consider at first the non-symmetric case, i.e. $a_1$ and $a_2$, and also $\mu_1$ and $\mu_2$, are not necessarily equal. In addition, $\be\in\R$ may be arbitrary in this section.

We recall the following definition \cite[Definition 3.1]{Ghou}.

\begin{definition}\label{def hsf}
Let $B$ be a closed subset of a set $X \subset H^1_{\rad}(\R^3,\R^2)$. We say that a class $\mathcal{F}$ of compact subsets of $X$ is a \emph{homotopy stable family with closed boundary $B$} provided
\begin{itemize}
\item[($a$)] every set in $\mathcal{F}$ contains $B$.
\item[($b$)] for any $A \in \mathcal{F}$ and any $\eta \in C([0,1] \times X,X)$ satisfying $\eta(t,x) = x$ for all $(t,x) \in (\{0\} \times X) \cup ([0,1] \times B)$, we have that $\eta(\{1\} \times A) \in \mathcal{F}$.
\end{itemize}
\end{definition}

This section is devoted to the proof of the following statement, and of its equivariant version, Theorem \ref{thm:PSequi}.

\begin{theorem}\label{thm:PS}
Let $\beta \in \R$, and let $\mathcal{F}$ be a homotopy stable family of compact subsets of $\mathcal{P}_\beta$, with closed boundary $B \subset \mathcal{P}_\beta$. Let
\[
c_{\mathcal{F},\beta}:= \inf_{A \in \mathcal{F}} \, \max_{(u,v) \in A} J_\beta(u,v)
\]
Suppose that
\[
\text{$B$ is contained in a connected component of $\mathcal{P}_\beta$},
\]
and that
\[
\max\{\sup J_\beta(B),0\} <  c_{\mathcal{F},\beta} < +\infty.
\]
Then there exists a sequence $\{(u_n,v_n)\}$ with the following properties:
\begin{itemize}
\item[($i$)] $(u_n,v_n) \in \mathcal{P}_\beta$ for every $n$;
\item[($ii$)] $J_\beta(u_n,v_n) \to c_{\mathcal{F},\beta}$ as $n \to \infty$;
\item[($iii$)] $\|\nabla (J_\beta|_{\cS})(u_n,v_n)\| \to 0$ as $n \to \infty$.
\end{itemize}
If moreover we can find a minimizing sequence $\{A_n\}$ for $c_{\mathcal{F},\beta}$ in such a way that $(u,v) \in A_n$ implies $u,v \ge 0$ a.e., then we can find the sequence $\{(u_n,v_n)\}$ satisfying the additional condition
\begin{itemize}
\item[($iv$)] $u_{n}^-, v_{n}^- \to 0$ a.e. in $\R^3$ as $n \to \infty$.
\end{itemize}
\end{theorem}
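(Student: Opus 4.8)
The plan is to reduce everything to the fibering map $s\mapsto\Psi^\be_{(u,v)}(s)=J_\be(s\star(u,v))$ of \eqref{explicit expression} and to the projection onto $\cP_\be$ it provides. Write $A(u,v):=\int_{\R^3}|\nabla u|^2+|\nabla v|^2$ and $B(u,v):=\int_{\R^3}\mu_1u^4+2\be u^2v^2+\mu_2v^4$, so that $\cE_\be=\{B>0\}$ is open in $\cS$ and contains $\cP_\be$. By Lemma~\ref{lem: star} it carries the retraction $m\colon\cE_\be\to\cP_\be$, $m(u,v):=s^\be_{(u,v)}\star(u,v)$, which is the identity on $\cP_\be$, satisfies $m\circ m=m$, is $\si$-equivariant (because $A$, $B$, hence $s^\be_{(u,v)}$ through \eqref{def s_u}, are $\si$-invariant), and depends continuously on its argument. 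I shall work with the projected functional $\widetilde J_\be:=J_\be\circ m$ on $\cE_\be$: maximising \eqref{explicit expression} in $s$ gives the closed form $\widetilde J_\be(u,v)=\tfrac{8}{27}A(u,v)^3/B(u,v)^2$, so $\widetilde J_\be$ is $C^1$ (in fact $C^\infty$) on $\cE_\be$, it is invariant under the $\star$-action and under $m$ (i.e.\ $\widetilde J_\be\circ m=\widetilde J_\be$), it coincides with $J_\be$ on $\cP_\be$, and — since $s^\be_{(u,v)}=0$ there — its differential along $\cS$ at a point of $\cP_\be$ equals that of $J_\be$. Endowing $\cE_\be$ with the $\star$-invariant ($\si$-invariant) Finsler metric $g_{(u,v)}(\phi,\psi):=\langle s^\be_{(u,v)}\star\phi,\,s^\be_{(u,v)}\star\psi\rangle_{H^1}$, a one-line computation gives the crucial identity $\|\nabla_g\widetilde J_\be(u,v)\|_g=\|\nabla(J_\be|_{\cS})(m(u,v))\|$ for every $(u,v)\in\cE_\be$, which in particular equals $\|\nabla(J_\be|_{\cS})(u,v)\|$ when $(u,v)\in\cP_\be$. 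Finally, on $\cP_\be$ we have $\widetilde J_\be=J_\be$, so $c_{\mathcal F,\be}=\inf_{A\in\mathcal F}\max_A\widetilde J_\be$ and $\sup\widetilde J_\be(B)=\sup J_\be(B)<c_{\mathcal F,\be}$.

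The core is a deformation argument. Suppose no sequence satisfies $(i)$--$(iii)$; then there are $\delta,\bar\eps>0$, which may be taken small enough that $\bar\eps<c_{\mathcal F,\be}-\sup J_\be(B)$, with $\|\nabla(J_\be|_{\cS})(u,v)\|\ge\delta$ for all $(u,v)\in\cP_\be$ with $|J_\be(u,v)-c_{\mathcal F,\be}|\le\bar\eps$. By the identity above, $\|\nabla_g\widetilde J_\be\|_g\ge\delta$ on $\cP_\be\cap\{|\widetilde J_\be-c_{\mathcal F,\be}|\le\bar\eps\}$, and since $g$ and $\widetilde J_\be$ are $\star$-invariant and every point of $\{|\widetilde J_\be-c_{\mathcal F,\be}|\le\bar\eps\}$ is the $\star$-image of a point of $\cP_\be$ in that set, the bound $\|\nabla_g\widetilde J_\be\|_g\ge\delta$ holds on the whole $\star$-invariant band $\{|\widetilde J_\be-c_{\mathcal F,\be}|<\bar\eps\}$ — which, being a sublevel band of $\widetilde J_\be$, lies inside $\cE_\be$ and needs no completeness discussion. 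The quantitative deformation lemma (as in \cite[\S3]{Ghou}), applied to $\widetilde J_\be$ on this band with the metric $g$ and a $\si$-equivariant pseudo-gradient (cut off to vanish near the edges of the band), then yields, for a suitably small $0<\eps<\bar\eps$, a $\si$-equivariant homotopy $\eta\colon[0,1]\times\cE_\be\to\cE_\be$ equal to the identity at $t=0$ and on $\{\widetilde J_\be\le c_{\mathcal F,\be}-\bar\eps\}$ (hence on $B$), with $\eta(1,\{\widetilde J_\be\le c_{\mathcal F,\be}+\eps\})\subseteq\{\widetilde J_\be\le c_{\mathcal F,\be}-\eps\}$. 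The decisive point is that $\eta$ can be pushed back to $\cP_\be$: $H(t,\cdot):=m\bigl(\eta(t,\cdot)\bigr)|_{\cP_\be}$ is a $\si$-equivariant homotopy of $\cP_\be$ into itself with $H(0,\cdot)=\mathrm{id}$ and $H(t,\cdot)|_B=\mathrm{id}$, so Definition~\ref{def hsf}$(b)$ gives $m(\eta(1,A))\in\mathcal F$ for every $A\in\mathcal F$; choosing $A$ with $\max_A J_\be<c_{\mathcal F,\be}+\eps$ and using $\widetilde J_\be\circ m=\widetilde J_\be$ together with $\widetilde J_\be=J_\be$ on $\cP_\be$, we get $\max_{m(\eta(1,A))}J_\be=\max_{\eta(1,A)}\widetilde J_\be\le c_{\mathcal F,\be}-\eps$, contradicting the definition of $c_{\mathcal F,\be}$. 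Hence a sequence satisfying $(i)$--$(iii)$ exists, and negating the assumption with $\delta=\bar\eps=1/n$ produces it directly inside $\cP_\be$.

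For $(iv)$ I run the same scheme while bookkeeping a minimising sequence $\{A_n\}\subset\mathcal F$: the standard refinement of the deformation/min-max scheme then also gives $\dist\bigl((u_n,v_n),A_n\bigr)\to0$, measured in the $g$-metric near the minimising sets; since $g$ agrees with the $H^1$-metric on $T\cP_\be$ and is comparable to it in a neighbourhood, this yields $\dist_{H^1}\bigl((u_n,v_n),A_n\bigr)\to0$. If each $A_n$ consists of pairs with $u,v\ge0$ a.e., the elementary pointwise inequality $(w-k)^-\ge w^-\ge0$ for $k\ge0$ gives $\|u_n^-\|_{L^2}^2+\|v_n^-\|_{L^2}^2\le\dist_{L^2}\bigl((u_n,v_n),A_n\bigr)^2\to0$, so $u_n^-,v_n^-\to0$ a.e.\ along a subsequence; here one also uses that the $\star$-action and $m$ preserve the cone of pairs with nonnegative components, so the whole construction can be kept close to it. The equivariant statement Theorem~\ref{thm:PSequi} is obtained verbatim: $J_\be$, $\cP_\be$, $\cE_\be$, $m$, $\widetilde J_\be$ and $g$ are $\si$-invariant (resp.\ equivariant) and $\{1,\si\}$ is finite, so every pseudo-gradient and homotopy above may be chosen $\si$-equivariant, and the class $\mathcal F$ of Definition~\ref{def hsf equi} is exactly the input needed for $H(t,\cdot)$ above.

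The main obstacle, and the reason for the weakened conclusion, is that $\cP_\be$ is \emph{not} a natural constraint at the level of Palais--Smale sequences: the scaling direction $\partial_s(s\star(u,v))|_{s=0}=\tfrac32(u,v)+x\cdot\nabla(u,v)$ is not controlled in $H^1$ uniformly along $\cP_\be$, so one cannot simply convert a Palais--Smale sequence for $J_\be|_{\cP_\be}$ into one for $J_\be|_{\cS}$ — this is precisely the gap in \cite{BaSo}. The device that circumvents it is the retraction $m$: one transfers \emph{deformations} rather than sequences, projecting them back by $m$, which is harmless because $\widetilde J_\be\circ m=\widetilde J_\be$; and the metric $g$ is what makes the relevant gradient $\|\nabla_g\widetilde J_\be\|_g=\|\nabla(J_\be|_{\cS})(m(\cdot))\|$ both $\star$-invariant (so that the bad-set bound propagates off $\cP_\be$) and equal, on $\cP_\be$, to the quantity appearing in $(iii)$. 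The remaining technical points — confining the $g$-gradient flow to $\cE_\be$ (handled by confining it to a sublevel band of $\widetilde J_\be$, where it automatically remains), comparability of $g$ and the $H^1$-metric near $\cP_\be$, and ensuring the final sequence lies in $\cP_\be$ — are all controlled by the fact that, on $\cP_\be$ near level $c_{\mathcal F,\be}$, one has $B=\tfrac43A$ bounded away from $0$ and $\infty$, so everything stays in a bounded region uniformly interior to $\cE_\be$.
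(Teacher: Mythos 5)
Your construction rests on the same two objects as the paper's proof --- the fibered functional $\widetilde J_\be$, which is exactly the $E_\be$ of \eqref{expE}, and the projection $m(u,v)=s^\be_{(u,v)}\star(u,v)$ --- but you deploy them differently. The paper pushes minimizing \emph{sets} into $\cP_\be$ by the homotopy $(t,(u,v))\mapsto (t\,s_{(u,v)})\star(u,v)$, applies the abstract min--max principle \cite[Theorem 3.2]{Ghou} to $E$ on $\cE_\be$ with the \emph{standard} $H^1$ Finsler structure (completeness of sublevels coming from Lemma \ref{lem:su bordo}), and then converts the resulting Palais--Smale sequence for $E$ into one for $J_\be$ on $\cP_\be$ by proving $s_{(\tilde u_n,\tilde v_n)}\to 0$, which uses the $H^1$-closeness of that sequence to the projected minimizing sets inside $\cP_\be$ (Proposition \ref{prop: PS}). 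You instead keep the family inside $\cP_\be$, transfer \emph{deformations} back with $m$, and replace the $s_n\to 0$ step by the scaling-invariant metric $g$, for which the identity $\|\nabla_g\widetilde J_\be(u,v)\|_g=\|\nabla(J_\be|_{\cS})(m(u,v))\|$ is indeed correct (it is Lemma \ref{lem: diff} combined with Lemma \ref{lem: tg}). This is a genuinely different, and in principle viable, implementation.

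There is, however, a genuine gap exactly at the delicate point. You invoke the quantitative deformation lemma for $\widetilde J_\be$ with the metric $g$ on the band $\{|\widetilde J_\be-c_{\mathcal{F},\be}|\le\bar\eps\}$, asserting that it ``needs no completeness discussion'' and, later, that ``everything stays in a bounded region uniformly interior to $\cE_\be$''. Neither holds as stated: the band is invariant under the $\star$-action, hence unbounded in $H^1$ (if $(u,v)\in\cP_\be$ lies at level $c_{\mathcal{F},\be}$ then $\tau\star(u,v)$ lies in the band for every $\tau$, with $\int_{\R^3}|\nabla(\tau\star u)|^2=e^{2\tau}\int_{\R^3}|\nabla u|^2$), and along such orbits $g$ degenerates with respect to the $H^1$ metric: at $\tau\star(u,v)$ a $g$-unit tangent vector can have $H^1$-norm of order $e^{\tau}$. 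Consequently neither the completeness required by the Finsler deformation machinery of \cite{Ghou} (with respect to $g$, not to $H^1$) nor the existence and continuity of the pseudo-gradient flow up to time $1$ is automatic --- the flow could a priori blow up in $H^1$ in finite time --- and this is precisely the constrained-versus-free conversion issue the theorem is meant to repair. The argument can be saved, but only with an estimate you never state: for instance, $(u,v)\mapsto s_{(u,v)}$ is uniformly $g$-Lipschitz on the band (its $g$-dual differential norm is constant along $\star$-orbits, and on $\cP_\be\cap\{|J_\be-c_{\mathcal{F},\be}|\le\bar\eps\}$ the kinetic and quartic integrals are bounded above and below by Lemma \ref{minP} and the level bound), so trajectories issuing from $\cP_\be$ keep $|s_{(u,v)}|\le C$ on $[0,1]$ and remain in a region where $g$ and $H^1$ are uniformly equivalent and which is $H^1$-complete; only then does your $\eta$, restricted to initial data in $\cP_\be$ (which is all you actually use), exist with the stated properties. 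A second, smaller gap: item ($iv$) is not proved. Your contradiction argument gives no information on $\dist_{H^1}((u_n,v_n),A_n)$; the ``standard refinement'' that tracks minimizing sequences is exactly \cite[Theorem 3.2]{Ghou}, i.e.\ the paper's route, and after invoking it you still need the comparability/$s_n\to0$ argument of Proposition \ref{prop: PS} to convert closeness to $A_n$ into the a.e.\ vanishing of $u_n^-,v_n^-$ for the projected sequence.
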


\begin{remark}
The theorem establishes that, if the assumptions of the minimax principle \cite[Theorem 3.2]{Ghou} are satisfied by the constrained functional $J_\be|_{\mathcal{P}_\be}$, then we can find a ``free" Palais-Smale sequence for $J_\be$ on $\mathcal{S}$, made of elements of $\cP_\beta$. This is exactly what we needed in the proof of \cite[Proposition 3.5]{BaSo} and consequently \cite[Theorem~1.1]{BaSo}. With regard to this, we point out that for the proof of Theorem \ref{thm:PS} the fact that we deal with radial solutions is not needed and never used, and hence the result holds also in the non-radial case. Notice also that the result is true for every $\beta \in \R$, and in particular in this statement we do not assume $\beta<0$.
\end{remark}

In order to simplify the notation, from now on we omit the dependence of all the quantities with respect to $\beta$; i.e.\ we write $J$ instead of $J_\be$ etc. For the proof of Theorem \ref{thm:PS}, we define the functional $E:\mathcal{E} \to \R$ by
\[
E(u,v):= J(s_{(u,v)} \star (u,v)).
\]
Using \eqref{def s_u} and the fact that $s_{(u,v)} \star (u,v) \in \mathcal{P}$, it is easy to check that the following equivalent expressions hold:
\begin{equation}\label{expE}
\begin{split}
E(u,v) &  = \frac16 \int_{\R^3}  |\nabla (s_{(u,v)} \star u)|^2 +  |\nabla (s_{(u,v)} \star v)|^2= \frac{e^{2 s_{(u,v)}}   }{6}   \int_{\R^3}  |\nabla u|^2 + |\nabla v|^2  \\
& = \frac{8 \left( \int_{\R^3}  |\nabla u|^2 + |\nabla v|^2 \right)^3}{27 \left( \int_{\R^3}  \mu_1 u^4 + 2\beta u^2 v^2 +  \mu_2 v^4\right)^2}
\end{split}
\end{equation}
for every $(u,v) \in \mathcal{E}$. Moreover, observing that for every $s_1,s_2 \in \R$ and $w \in H^1(\R^3)$
\begin{equation}\label{eq:star}
  (s_1 + s_2) \star w = s_1 \star (s_2 \star w), \quad \text{and} \quad 0 \star w = w,
\end{equation}
it follows immediately by Lemma \ref{lem: star} that $E(s \star (u,v)) = E(u,v)$ for every $s \in \R$ and $(u,v) \in \mathcal{E}$.


We aim at proving that a Palais-Smale sequence for $E$ on $\cS$ yields a Palais-Smale sequence for $J$ on $\cS$ with elements on $\mathcal{P}$. In this direction, we need some preliminary lemmas.

\begin{lemma}\label{minP}
There exists $\delta>0$ (depending on $a_1,a_2,\mu_1,\mu_2>0 $ and on $\beta \in \R$) such that
\[
\inf_{(u,v) \in \mathcal{P}} \, \|(u,v)\|_{\mathcal{D}^{1,2}} \ge \delta.
\]
\end{lemma}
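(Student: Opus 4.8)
The plan is to argue by contradiction, exploiting the Pohozaev-type relation defining $\mathcal{P}$ together with the Gagliardo--Nirenberg inequality. Suppose there is a sequence $(u_n,v_n) \in \mathcal{P}$ with $\|(u_n,v_n)\|_{\mathcal{D}^{1,2}}^2 = \int_{\R^3} |\nabla u_n|^2 + |\nabla v_n|^2 \to 0$. By the definition of $\mathcal{P}$ we have
\[
  \int_{\R^3} |\nabla u_n|^2 + |\nabla v_n|^2 = \frac34 \int_{\R^3} \mu_1 u_n^4 + 2\beta u_n^2 v_n^2 + \mu_2 v_n^4.
\]
The right-hand side is bounded above by $\frac34 \int_{\R^3} \mu_1 u_n^4 + 2|\beta| u_n^2 v_n^2 + \mu_2 v_n^4 \le C\big(\int_{\R^3} u_n^4 + v_n^4\big)$ with $C = C(\mu_1,\mu_2,\beta)$, using $2u_n^2 v_n^2 \le u_n^4 + v_n^4$. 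Now I would apply the Gagliardo--Nirenberg inequality in $\R^3$, namely $\|w\|_{L^4}^4 \le C_{GN}\|\nabla w\|_{L^2}^3 \|w\|_{L^2}$, to each of $u_n$ and $v_n$. Since $(u_n,v_n) \in \mathcal{S}$, the $L^2$-norms are fixed: $\|u_n\|_{L^2} = a_1$, $\|v_n\|_{L^2} = a_2$. Hence
\[
  \int_{\R^3} u_n^4 + v_n^4 \le C_{GN}\Big(a_1 \|\nabla u_n\|_{L^2}^3 + a_2 \|\nabla v_n\|_{L^2}^3\Big) \le C' \|(u_n,v_n)\|_{\mathcal{D}^{1,2}}^3,
\]
with $C' = C'(a_1,a_2)$.

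Combining the two displays gives
\[
  \|(u_n,v_n)\|_{\mathcal{D}^{1,2}}^2 \le C C' \|(u_n,v_n)\|_{\mathcal{D}^{1,2}}^3,
\]
that is, $1 \le C C' \|(u_n,v_n)\|_{\mathcal{D}^{1,2}}$, so $\|(u_n,v_n)\|_{\mathcal{D}^{1,2}} \ge (CC')^{-1} =: \delta > 0$, contradicting $\|(u_n,v_n)\|_{\mathcal{D}^{1,2}} \to 0$. This $\delta$ depends only on $\mu_1,\mu_2,\beta$ (through $C$) and on $a_1,a_2$ (through $C'$), as claimed. Note the argument in fact gives the quantitative bound $\|(u,v)\|_{\mathcal{D}^{1,2}} \ge \delta$ directly for every $(u,v) \in \mathcal{P}$, without passing to a sequence.

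There is no serious obstacle here; the only point requiring a little care is the sign issue when $\beta < 0$: one must not try to bound the cubic term from below, only from above, and the bound $\int \mu_1 u^4 + 2\beta u^2 v^2 + \mu_2 v^4 \le \int \mu_1 u^4 + 2|\beta|u^2 v^2 + \mu_2 v^4$ together with $2u^2v^2 \le u^4 + v^4$ suffices. (Implicitly one also uses that the left-hand side of the $\mathcal{P}$-relation is nonnegative, so membership in $\mathcal{P}$ forces $(u,v) \in \overline{\mathcal{E}}$ with the cubic integral nonnegative; this is consistent with Lemma~\ref{lem: star} and is all we need.) Applying Gagliardo--Nirenberg separately to each component, rather than to the pair, keeps the constants transparent.
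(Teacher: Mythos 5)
Your argument is correct and is essentially the paper's proof: both use the Pohozaev-type relation defining $\mathcal{P}$, bound the quartic form via $2u^2v^2\le u^4+v^4$, and apply the Gagliardo--Nirenberg inequality with the fixed $L^2$-masses to force $\|(u,v)\|_{\mathcal{D}^{1,2}}$ away from zero. The contradiction framing is superfluous (as you note, the bound is direct), but that is a cosmetic difference only.
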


\begin{proof}
This is a consequence of the Gagliardo-Nirenberg inequality, which asserts that there exists a universal constant $C>0$ such that
\[
  \int_{\R^3} w^4 \le C \left(\int_{\R^3} w^2\right)^\frac12\left(\int_{\R^3} |\nabla w|^2 \right)^\frac32 \qquad
   \text{for all } w \in H^1(\R^3).
\]
If $(u,v) \in \mathcal{P}$, then $(u,v) \in \mathcal{E}$, and then we have
\begin{align*}
0 <& \left( \int_{\R^3}  |\nabla u|^2 + |\nabla v|^2 \right)^{\frac23}
  = \left( \frac34 \int_{\R^3} \mu_1 u^4 + 2\beta u^2v^2 + \mu_2 v^4 \right)^\frac23   \\
  \le & C  \left(\int_{\R^3} u^4 + v^4\right)^\frac23  \le C \int_{\R^3}  |\nabla u|^2 + |\nabla v|^2
\end{align*}
for a positive constant $C$ depending on the data.
\end{proof}

\begin{lemma}\label{BaSo2.7}
Let $\{u_n\} \subset H^1(\R^3)$, $\{s_n\} \subset \R$, and let us assume that $u_n \to u$ strongly in $H^1(\R^3)$, and $s_n \to s$ in $\R$. Then $s_n \star u_n \to s \star u$ strongly in $H^1(\R^3)$.
\end{lemma}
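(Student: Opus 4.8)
The plan is to reduce everything to the elementary fact that the one-parameter scaling group $s\mapsto s\star(\cdot)$ acts continuously on $L^2(\R^3)$. First I would record two identities coming straight from the definition $(s\star w)(x)=e^{3s/2}w(e^sx)$: the action is an $L^2$-isometry, $\|s\star w\|_{L^2}=\|w\|_{L^2}$ (already noted in the paper), and differentiating through the scaling gives $\nabla(s\star w)=e^{s}\,\bigl(s\star(\nabla w)\bigr)$ componentwise, hence $\|\nabla(s\star w)\|_{L^2}=e^{s}\|\nabla w\|_{L^2}$. In particular $\|s\star w\|_{H^1}^2=\|w\|_{L^2}^2+e^{2s}\|\nabla w\|_{L^2}^2$, so for $s$ in a bounded set one has a uniform bound $\|s\star w\|_{H^1}\le C\|w\|_{H^1}$.

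Step 1 is the continuity of the action on $L^2$: for fixed $f\in L^2(\R^3)$ I claim $t\star f\to f$ in $L^2$ as $t\to 0$, which by the isometry property and the group law \eqref{eq:star} gives $s'\star f\to s\star f$ in $L^2$ whenever $s'\to s$ (apply $(-s)\star$ and write $\|s'\star f-s\star f\|_{L^2}=\|(s'-s)\star f-f\|_{L^2}$). For $f\in C^\infty_c$ the claim is immediate: $t\star f\to f$ pointwise, and for $|t|\le 1$ the functions $t\star f$ are supported in a fixed compact set and uniformly bounded, so dominated convergence applies. For general $f\in L^2$, pick $g\in C^\infty_c$ with $\|f-g\|_{L^2}<\eps$ and estimate $\|t\star f-f\|_{L^2}\le\|t\star(f-g)\|_{L^2}+\|t\star g-g\|_{L^2}+\|g-f\|_{L^2}=2\|f-g\|_{L^2}+\|t\star g-g\|_{L^2}<3\eps$ for $|t|$ small.

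Step 2 is the decomposition. Since $\{s_n\}$ converges it is bounded, say $|s_n|\le M$, and I would write
\[
s_n\star u_n-s\star u=s_n\star(u_n-u)+\bigl(s_n\star u-s\star u\bigr).
\]
For the first term, $\|s_n\star(u_n-u)\|_{L^2}=\|u_n-u\|_{L^2}\to 0$ and $\|\nabla(s_n\star(u_n-u))\|_{L^2}=e^{s_n}\|\nabla(u_n-u)\|_{L^2}\le e^{M}\|\nabla(u_n-u)\|_{L^2}\to 0$. For the second term, $\|s_n\star u-s\star u\|_{L^2}\to 0$ by Step 1, while
\[
\|\nabla(s_n\star u)-\nabla(s\star u)\|_{L^2}\le|e^{s_n}-e^{s}|\,\|\nabla u\|_{L^2}+e^{s}\|s_n\star\nabla u-s\star\nabla u\|_{L^2}\longrightarrow 0,
\]
using continuity of the exponential and Step 1 applied to each component of $\nabla u$. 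Adding the two estimates yields $\|s_n\star u_n-s\star u\|_{H^1}\to 0$. There is essentially no hard step; the only point requiring a little care is Step 1, the strong $L^2$-continuity of the dilation action, which is the standard density argument above, and everything else is bookkeeping with the two scaling identities together with the boundedness of $\{s_n\}$.
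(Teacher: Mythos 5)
Your proof is correct, but it takes a genuinely different route from the paper. You prove strong continuity of the dilation action on $L^2$ (the density argument with $C^\infty_c$ functions, dominated convergence, the isometry property and the group law \eqref{eq:star}), and then conclude by the decomposition $s_n\star u_n-s\star u=s_n\star(u_n-u)+(s_n\star u-s\star u)$, estimating each piece directly in the $H^1$ norm via the scaling identities $\|s\star w\|_{L^2}=\|w\|_{L^2}$ and $\|\nabla(s\star w)\|_{L^2}=e^{s}\|\nabla w\|_{L^2}$. The paper instead avoids any strong-continuity statement for the group: it tests $s_n\star u_n$ against $\varphi\in C^\infty_c$, changes variables and uses dominated convergence to get weak convergence $s_n\star u_n\wc s\star u$ in $H^1$, and then computes $\|s_n\star u_n\|_{H^1}^2=e^{2s_n}\|\nabla u_n\|_{L^2}^2+\|u_n\|_{L^2}^2\to\|s\star u\|_{H^1}^2$, so that weak convergence together with convergence of norms in the Hilbert space $H^1$ yields strong convergence. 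Your argument is more self-contained at the level of hard analysis (everything is an explicit norm estimate, and it isolates the reusable fact that $s\mapsto s\star f$ is strongly continuous on $L^2$), at the price of the extra density step; the paper's argument is shorter because it delegates the compactness-free part to the Radon--Riesz-type principle that weak convergence plus norm convergence implies strong convergence in a Hilbert space. Both are complete proofs of the lemma.
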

\begin{proof}
Let us show at first that $s_n \star u_n \to s \star u$ weakly in $L^2(\R^3)$.
To this purpose, we take any $\varphi \in C^\infty_c(\R^3)$, and for a compact set $K$ containing the support of $\varphi(e^{-s_n}\,\cdot)$ for every $n$ sufficiently large, we observe that
\begin{align*}
\int_{\R^3} e^{3/2 s_n} u_n(e^{s_n} x) \varphi(x)\,dx
&= e^{-3/2s_n} \int_{K} u_n(y) \varphi(e^{-s_n} y)\,dy \\
&\to e^{-3/2 s}\int_{K} u(y) \varphi(e^{-s} y)\,dy = \int_{\R^3} e^{3/2 s}u(e^s x) \varphi(x)\,dx
\end{align*}
as $n \to \infty$, by the dominated convergence theorem. In the same way, we can show that for any $i=1,\dots,3$ and any $\varphi \in C^\infty_c(\R^3)$
\[
\int_{\R^3} \varphi \, \pa_{x_i}(s_n \star u_n)  \to \int_{\R^3} \varphi \, \pa_{x_i}(s \star u),
\]
and as a consequence $s_n \star u_n \to s \star u$ weakly in $H^1(\R^3)$. Furthermore,
\[
\|s_n \star u_n\|_{H^1}^2 = e^{2s_n} \int_{\R^3} |\nabla u_n|^2 + \int_{\R^3} u_n^2  \to e^{2s} \int_{\R^3} |\nabla u|^2 + \int_{\R^3} u^2= \|s \star u\|_{H^1}^2,
\]
and the thesis follows.
%
%
%
\end{proof}

\begin{lemma}\label{lem: tg}
For $(u,v) \in \cS$ and $s\in\R$ the map
\[
  T_{(u,v)} \cS \to T_{s \star (u,v)} \cS,\quad (\varphi_1,\varphi_2) \mapsto s \star (\varphi_1,\varphi_2),
\]
is a linear isomorphism with inverse $(\psi_1,\psi_2) \mapsto (-s) \star (\psi_1,\psi_2)$. In particular, for $(u,v) \in \mathcal{E}$ the map
\[
  T_{(u,v)} \cS \to T_{s_{(u,v)} \star (u,v)} \cS,\quad (\varphi_1,\varphi_2) \mapsto s_{(u,v)} \star (\varphi_1,\varphi_2),
\]
is an isomorphism.
\end{lemma}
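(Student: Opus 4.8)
The plan is to verify directly that the map $(\varphi_1,\varphi_2)\mapsto s\star(\varphi_1,\varphi_2)$ is well-defined between the indicated tangent spaces, is linear, and has the claimed inverse. First I would recall the description of the tangent space: since $\cS = S_{a_1}\times S_{a_2}$ and $S_a = \{w\in H^1_{\rad}: \int w^2 = a^2\}$, we have
\[
  T_{(u,v)}\cS = \left\{(\varphi_1,\varphi_2)\in H^1_{\rad}(\R^3,\R^2): \int_{\R^3} u\varphi_1 = 0 = \int_{\R^3} v\varphi_2\right\}.
\]
So the statement reduces to showing that for a single component $w\in S_a$ and a single tangent vector $\varphi\in T_w S_a$, one has $s\star\varphi\in T_{s\star w}S_a$, i.e.\ $\int (s\star w)(s\star\varphi) = 0$.

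The key computation is that the operator $w\mapsto s\star w$ preserves $L^2$ inner products, not merely $L^2$ norms: by the change of variables $y = e^s x$,
\[
  \int_{\R^3} (s\star w)(x)(s\star\varphi)(x)\,dx
  = \int_{\R^3} e^{3s} w(e^s x)\varphi(e^s x)\,dx
  = \int_{\R^3} w(y)\varphi(y)\,dy.
\]
Hence $\int(s\star w)(s\star\varphi) = \int w\varphi = 0$ whenever $\varphi\in T_w S_a$, so $s\star\varphi \in T_{s\star w}S_a$; applying this to both components gives that the map sends $T_{(u,v)}\cS$ into $T_{s\star(u,v)}\cS$. Linearity is immediate from the explicit formula $(s\star\varphi)(x) = e^{3s/2}\varphi(e^s x)$, since scaling and dilation are linear in $\varphi$. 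The inverse: from \eqref{eq:star} we have $(-s)\star(s\star\psi) = 0\star\psi = \psi$ and $s\star((-s)\star\psi) = \psi$, and by the same inner-product identity (with $-s$ in place of $s$) the map $\psi\mapsto(-s)\star\psi$ carries $T_{s\star(u,v)}\cS$ back into $T_{(u,v)}\cS = T_{(-s)\star(s\star(u,v))}\cS$. This establishes that the map is a linear isomorphism.

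For the second assertion, one simply specializes $s = s_{(u,v)}^\beta$, which is a well-defined real number for $(u,v)\in\cE_\be$ by Lemma~\ref{lem: star}(i); the first part then applies verbatim with this particular choice of $s$. I do not expect any serious obstacle here: the only point requiring a moment's care is checking that $s\star$ really maps $H^1_{\rad}$ to $H^1_{\rad}$ and is bounded (so that the tangent-space codomain makes sense), but radiality is preserved by dilations and boundedness follows from $\|s\star w\|_{H^1}^2 = e^{2s}\|\nabla w\|_{L^2}^2 + \|w\|_{L^2}^2$, exactly as used in the proof of Lemma~\ref{BaSo2.7}. The essential ingredient is the $L^2$-isometry property of $s\star$ extended to a bilinear (inner product) identity, which is what makes the orthogonality condition defining the tangent space transform correctly.
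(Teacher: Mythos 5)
Your proof is correct and follows essentially the same route as the paper: the key step in both is the change-of-variables identity $\int_{\R^3}(s\star u)(s\star\varphi_1)=\int_{\R^3}u\varphi_1=0$ showing the map preserves the orthogonality conditions defining the tangent space, with the inverse then given by \eqref{eq:star}. Your additional remarks on boundedness of $s\star$ on $H^1_{\rad}$ and the explicit specialization to $s=s_{(u,v)}$ are fine but not essentially different from what the paper leaves implicit.
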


\begin{proof}
For $(\varphi_1,\varphi_2)\in T_{(u,v)} \cS$ we have
\[
  \int_{\R^3} (s \star u)  (s \star \varphi_1) = \int_{\R^3} e^{3s} \, u(e^{s}x) \varphi_1(e^{s} x) \, dx = \int_{\R^3}  u\varphi_1 = 0.
\]
As a consequence $s \star (\varphi_1,\varphi_2) \in T_{s \star (u,v)} \cS$, and the map is well defined. Clearly it is linear. The result follows easily using \eqref{eq:star}.
\end{proof}

\begin{lemma}\label{lem:su bordo}
Let $\{(u_n,v_n)\} \subset \mathcal{E}$ with $(u_n,v_n) \to (u,v) \in \pa \mathcal{E}$ strongly in $H^1(\R^3,\R^2)$ as $n \to \infty$. Then $E(u_n,v_n) \to +\infty$ as $n \to \infty$.
\end{lemma}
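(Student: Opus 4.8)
The plan is to exploit the explicit formula for $E$ from \eqref{expE},
\[
E(u,v) = \frac{8\left(\int_{\R^3}|\nabla u|^2 + |\nabla v|^2\right)^3}{27\left(\int_{\R^3}\mu_1 u^4 + 2\beta u^2 v^2 + \mu_2 v^4\right)^2},
\]
and to show that, as $n \to \infty$, the numerator converges to a strictly positive limit while the denominator converges to $0$ from above. The strong $H^1(\R^3,\R^2)$ convergence $(u_n,v_n)\to(u,v)$ implies, via the Sobolev embedding $H^1(\R^3)\hookrightarrow L^4(\R^3)$, both $\int_{\R^3}|\nabla u_n|^2 + |\nabla v_n|^2 \to \int_{\R^3}|\nabla u|^2 + |\nabla v|^2$ and $\int_{\R^3}\mu_1 u_n^4 + 2\beta u_n^2 v_n^2 + \mu_2 v_n^4 \to \int_{\R^3}\mu_1 u^4 + 2\beta u^2 v^2 + \mu_2 v^4$, so the claim reduces to two elementary observations about the limit $(u,v)$.

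First, $\cS$ is closed under strong $H^1$ convergence, hence $(u,v)\in\cS$ and in particular $\int_{\R^3}u^2 = a_1^2 > 0$; thus $u\not\equiv 0$, and since a constant function in $L^2(\R^3)$ must vanish, $\int_{\R^3}|\nabla u|^2 + |\nabla v|^2 > 0$, so the numerators converge to a strictly positive number. Second, the map $(u,v)\mapsto\int_{\R^3}\mu_1 u^4 + 2\beta u^2 v^2 + \mu_2 v^4$ is continuous on $\cS$, and $\cE$ is precisely the subset on which it is strictly positive; hence $\cE$ is open in $\cS$ and its relative boundary is contained in the zero set of this map. Since $(u,v)\in\pa\cE$ we get $\int_{\R^3}\mu_1 u^4 + 2\beta u^2 v^2 + \mu_2 v^4 = 0$, while each $(u_n,v_n)\in\cE$ forces the denominators $\left(\int_{\R^3}\mu_1 u_n^4 + 2\beta u_n^2 v_n^2 + \mu_2 v_n^4\right)^2$ to be strictly positive and to tend to $0$. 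Combining the two facts, $E(u_n,v_n)\to+\infty$.

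There is essentially no serious obstacle here: the only points needing a line of justification are that the Dirichlet term does not degenerate in the limit (handled by $(u,v)\in\cS$, so it cannot be a nonzero constant) and that the quartic term vanishes in the limit (handled by the characterization of $\pa\cE$ as lying in the zero set of a continuous function). One could alternatively derive the lower bound on the numerator from Lemma~\ref{minP} after rescaling by $s_{(u_n,v_n)}$, but the direct argument above is cleaner and avoids controlling the scaling parameters.
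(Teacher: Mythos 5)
Your proposal is correct and follows essentially the same route as the paper: both pass to the explicit expression \eqref{expE}, use that strong $H^1$ convergence gives $L^4$ convergence so the quartic term tends to its value at $(u,v)\in\pa\cE$, which is squeezed to $0$, while the Dirichlet term tends to $\int_{\R^3}|\nabla u|^2+|\nabla v|^2>0$ because $(u,v)\in\cS$. Your extra justifications (openness of $\cE$ via continuity of the quartic form, and positivity of the gradient term since a constant in $L^2(\R^3)$ vanishes) are exactly the details the paper leaves implicit.
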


\begin{proof}
If $(u_n,v_n) \to (u,v)$ strongly in $H^1(\R^3,\R^2)$, then it converges also in $L^4(\R^3,\R^2)$, and hence
\[
  0 \le \lim_{n \to \infty}\int_{\R^3}  \mu_1 u_n^4 + 2 \beta u_n^2 v_n^2 +\mu_2 v_n^4 =\int_{\R^3} \mu_1 u^4 + 2 \beta u^2 v^2 +\mu_2 v^4 \le 0,
\]
because $(u,v) \in \pa\mathcal{E}$. On the other hand, since $(u,v) \in \cS$
\[
\lim_{n \to \infty} \int_{\R^3} |\nabla u_{n}|^2 + |\nabla v_{n}|^2  = \int_{\R^3} |\nabla u|^2 + |\nabla v|^2 > 0,
\]
and hence the thesis follows by \eqref{expE}.
\end{proof}

The next lemma is crucial for what follows.

\begin{lemma}\label{lem: diff}
The functional $E$ is of class $C^1$ in $\mathcal{E}$, and
\[
dE(u,v)[ (\varphi_1, \varphi_2)] = dJ(s_{(u,v)} \star (u,v))[s_{(u,v)} \star (\varphi_1, \varphi_2)]
\]
for every $(u,v) \in \mathcal{E}$, for every $(\varphi_1, \varphi_2) \in T_{(u,v)} \cS$.
\end{lemma}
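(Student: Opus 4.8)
The plan is to establish the formula by a direct computation using the chain rule, exploiting the fact that $E(u,v) = J(s_{(u,v)}\star(u,v))$ is a composition of three maps: the assignment $(u,v)\mapsto s_{(u,v)}$, the $\star$-action $((u,v),s)\mapsto s\star(u,v)$, and the functional $J$. First I would observe that $J$ is of class $C^1$ on $H^1(\R^3,\R^2)$ (being a polynomial-type functional on a Sobolev space in the subcritical regime), and that the map $\R\times H^1(\R^3,\R^2)\to H^1(\R^3,\R^2)$, $(s,(u,v))\mapsto s\star(u,v)$, is continuous, with continuous partial derivatives: in the $(u,v)$-direction it is the linear isomorphism from Lemma~\ref{lem: tg}, and in the $s$-direction the derivative is $\frac{d}{ds}(s\star(u,v))$, which one computes explicitly from $(s\star w)(x) = e^{3s/2}w(e^s x)$. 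One must be slightly careful that $s\mapsto s\star w$ is not differentiable into $H^1$ at generic $w\in H^1$ with the full strength one might wish, but it is $C^1$ from $\R$ into $H^1$ whenever $w$ itself lies in $H^1$ and one differentiates the expression $e^{2s}\|\nabla w\|_2^2 + \|w\|_2^2$ — in fact what we really need is only that $s\mapsto J(s\star(u,v)) = \Psi^\beta_{(u,v)}(s)$ is smooth, which is transparent from the explicit formula \eqref{explicit expression}, and that $(u,v)\mapsto J(s\star(u,v))$ is $C^1$ for fixed $s$, which follows from the $C^1$ regularity of $J$ and the linearity (hence smoothness) of $(u,v)\mapsto s\star(u,v)$.

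Next I would show that $(u,v)\mapsto s_{(u,v)}$ is of class $C^1$ on $\mathcal{E}$. This follows directly from the explicit formula \eqref{def s_u}:
\[
s_{(u,v)} = \log\left(\frac{4\int_{\R^3}|\nabla u|^2 + |\nabla v|^2}{3\int_{\R^3}\mu_1 u^4 + 2\beta u^2 v^2 + \mu_2 v^4}\right),
\]
whose numerator and denominator are $C^1$ functions of $(u,v)\in H^1(\R^3,\R^2)$, the denominator being strictly positive precisely on $\mathcal{E}$, so the logarithm of their ratio is $C^1$ there. Alternatively, one could invoke the implicit function theorem applied to the equation $\partial_s\Psi^\beta_{(u,v)}(s) = 0$, using that $s\mapsto\Psi^\beta_{(u,v)}(s)$ has a nondegenerate critical point (it is a strict maximum, and from \eqref{explicit expression} the second derivative at the critical point is strictly negative), but the explicit formula is cleaner. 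Consequently $E = J\circ G$, where $G(u,v) := s_{(u,v)}\star(u,v)$ is a $C^1$ map from $\mathcal{E}$ into $H^1(\R^3,\R^2)$, and hence $E$ is $C^1$ on $\mathcal{E}$.

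Finally, for the derivative formula, I would apply the chain rule to $E(u,v) = J(s_{(u,v)}\star(u,v))$. Differentiating in a direction $(\varphi_1,\varphi_2)\in T_{(u,v)}\cS$ produces two terms:
\[
dE(u,v)[(\varphi_1,\varphi_2)] = dJ(s_{(u,v)}\star(u,v))\big[s_{(u,v)}\star(\varphi_1,\varphi_2)\big] + dJ(s_{(u,v)}\star(u,v))\big[\tfrac{d}{ds}(s\star(u,v))\big|_{s=s_{(u,v)}}\big]\cdot ds_{(u,v)}[(\varphi_1,\varphi_2)].
\]
The key point is that the second term vanishes: since $s_{(u,v)}$ is a critical point of $\Psi^\beta_{(u,v)}(s) = J(s\star(u,v))$, we have $0 = \frac{d}{ds}J(s\star(u,v))\big|_{s=s_{(u,v)}} = dJ(s_{(u,v)}\star(u,v))\big[\frac{d}{ds}(s\star(u,v))\big|_{s=s_{(u,v)}}\big]$, which kills the entire second contribution regardless of the value of $ds_{(u,v)}[(\varphi_1,\varphi_2)]$. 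This is exactly the standard ``natural constraint'' cancellation and is the conceptual heart of the argument. I expect the main technical nuisance — not a deep obstacle, but the point requiring care — to be the justification that the various compositions are genuinely differentiable as maps between the relevant Banach spaces, in particular controlling the $s$-dependence of $s\star(u,v)$ in the $H^1$-norm; this is handled by the explicit formulas \eqref{explicit expression} and \eqref{def s_u} together with Lemma~\ref{BaSo2.7}, which guarantees the needed continuity of the $\star$-action. Everything else is bookkeeping with the chain rule.
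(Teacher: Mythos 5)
Your route is genuinely different from the paper's and is correct in substance, but the one display you write needs repair. The paper never pushes the chain rule through the map $(u,v)\mapsto s_{(u,v)}\star(u,v)$: it differentiates the explicit rational expression \eqref{expE} for $E$ directly, recognizes the resulting coefficients as $e^{2s_{(u,v)}}$ and $e^{3s_{(u,v)}}$ via \eqref{def s_u}, and then identifies the outcome with $dJ(s_{(u,v)}\star(u,v))[s_{(u,v)}\star(\varphi_1,\varphi_2)]$ by scaling back the integrals. The ``natural constraint'' cancellation you emphasize is thereby encoded automatically in the algebra, and the question of differentiating the $\star$-action in $s$ never arises.

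In your version the delicate point is precisely the displayed chain rule, whose second term is written as $dJ(s_{(u,v)}\star(u,v))\bigl[\tfrac{d}{ds}(s\star(u,v))\big|_{s=s_{(u,v)}}\bigr]\cdot ds_{(u,v)}[(\varphi_1,\varphi_2)]$. As you yourself note, for generic $(u,v)\in H^1$ the curve $s\mapsto s\star(u,v)$ is \emph{not} differentiable into $H^1$ (its formal derivative involves $x\cdot\nabla u$, which need not lie in $L^2$), so $\tfrac{d}{ds}(s\star(u,v))$ is not an admissible direction for $dJ$, and the chain rule cannot be applied through $G(u,v)=s_{(u,v)}\star(u,v)$ viewed as an $H^1$-valued map; your intermediate sentence claiming $C^1$ dependence of $s\mapsto s\star w$ into $H^1$ for $w\in H^1$ is in fact false. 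The correct implementation is the one you gesture at: apply the chain rule to the scalar function $\Phi(s,(u,v)):=J(s\star(u,v))=\Psi^\beta_{(u,v)}(s)$, which by \eqref{explicit expression} is jointly $C^1$ on $\R\times H^1$; then $E(u,v)=\Phi(s_{(u,v)},(u,v))$, the $s$-partial term vanishes since $\partial_s\Phi(s_{(u,v)},(u,v))=(\Psi^\beta_{(u,v)})'(s_{(u,v)})=0$, and the $(u,v)$-partial at $s=s_{(u,v)}$ equals $e^{2s_{(u,v)}}\int_{\R^3}\nabla u\cdot\nabla\varphi_1+\nabla v\cdot\nabla\varphi_2-e^{3s_{(u,v)}}\int_{\R^3}\mu_1u^3\varphi_1+\beta uv(u\varphi_2+v\varphi_1)+\mu_2v^3\varphi_2$, which after changing variables is exactly $dJ(s_{(u,v)}\star(u,v))[s_{(u,v)}\star(\varphi_1,\varphi_2)]$. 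Your use of the $C^1$ regularity of $(u,v)\mapsto s_{(u,v)}$ from the explicit logarithm of \eqref{def s_u} is fine and plays the same role as the paper's computation. What your approach buys is a conceptual explanation of why the formula holds (the natural-constraint cancellation); what the paper's buys is that no regularity of the $\star$-action in $s$ ever has to be discussed.
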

\begin{proof}
By the last expression in \eqref{expE}, it is clear that $E \in C^1(\mathcal{E})$, and using also \eqref{def s_u}
\begin{align*}
dE(u,v)& [(\varphi_1, \varphi_2)]  =  \left(\frac{ 4 \int_{\R^3}  |\nabla u|^2 + |\nabla v|^2}{3 \int_{\R^3}  \mu_1 u^4 + 2\beta u^2 v^2 + \mu_2 v^4} \right)^2 \int_{\R^3}  \nabla u \cdot \nabla \varphi_1 + \nabla v \cdot \nabla \varphi_2
\\
& \qquad - \left(\frac{ 4 \int_{\R^3}  |\nabla u|^2 + |\nabla v|^2}{3 \int_{\R^3} \mu_1 u^4 + 2\beta u^2 v^2 + \mu_2 v^4} \right)^3  \int_{\R^3} \mu_1 u^3 \varphi_1 + \beta  uv (u \varphi_2 + v \varphi_1) + \mu_2 v^3 \varphi_2  \\
& = e^{2 s_{(u,v)}} \int_{\R^3}  \nabla u \cdot \nabla \varphi_1 + \nabla v \cdot \nabla \varphi_2 - e^{3 s_{(u,v)}} \int_{\R^3} \mu_1 u^3 \varphi_1 + \beta  uv (u \varphi_2 + v \varphi_1) +\mu_2 v^3 \varphi_2 \\
& =  \int_{\R^3} \nabla (s_{(u,v)} \star u) \cdot \nabla (s_{(u,v)} \star\varphi_1) +  \nabla (s_{(u,v)} \star v) \cdot \nabla (s_{(u,v)} \star\varphi_2) \\
& \qquad -  \int_{\R^3} \mu_1 (s_{(u,v)} \star u)^3(s_{(u,v)} \star \varphi_1) + \mu_2 (s_{(u,v)} \star v)^3(s_{(u,v)} \star \varphi_2)  \\
& \qquad -  \int_{\R^3} \beta (s_{(u,v)} \star u) (s_{(u,v)} \star v) \big( (s_{(u,v)} \star u) (s_{(u,v)} \star \varphi_2) + (s_{(u,v)} \star v)(s_{(u,v)} \star \varphi_1) \big) \\
& = dJ(s_{(u,v)} \star (u,v))[s_{(u,v)} \star (\varphi_1, \varphi_2)],
\end{align*}
for every $(u,v) \in \mathcal{E}$ and $(\varphi_1, \varphi_2) \in T_{(u,v)}\cS$.
\end{proof}

The immediate corollary of the previous lemmas is that $(u,v) \in \mathcal{E}$ is a critical point for $E$ on $\cS$ if and only if $s_{(u,v)} \star (u,v)$ is a critical point for $J$ on $\cS$, with $s_{(u,v)} \star (u,v) \in \mathcal{P}$. This result is not enough for our purposes, we wish to obtain a similar characterization for Palais-Smale sequences.

\begin{proposition}\label{prop: PS}
Let $\mathcal{G}$ be a homotopy stable family of compact subsets of $\mathcal{E}$ with closed boundary $B$, and let
\[
  e_{\cG} := \inf_{A \in \mathcal{G}} \max_{(u,v) \in A} E(u,v).
\]
Suppose that
\begin{equation}\label{hp minmax}
\text{$B$ is contained in a connected component of $\mathcal{P}$},
\end{equation}
and that
\begin{equation}\label{hp minmax2}
\max\{\sup E(B),0\} < e_{\cG}<+\infty.
\end{equation}
Then there exist two sequences $\{(\tilde u_n,\tilde v_n)\}$ and $\{(u_n,v_n):= s_{(\tilde u_n, \tilde v_n)} \star (\tilde u_n,\tilde v_n)\}$ with the following properties:
\begin{itemize}
\item[($i$)] $\{(\tilde u_n,\tilde v_n)\}$ is a Palais-Smale sequence for $E$ on $\cS$, at level $e_{\cG}$;
\item[($ii$)] $s_{(\tilde u_n,\tilde v_n)} \to 0$ as $n \to \infty$;
\item[($iii$)] $(u_n,v_n) \in \mathcal{P}$ for every $n$;
\item[($iv$)] $\{(u_n,v_n)\}$ is a Palais-Smale sequence for $J$ on $\cS$, at level $e_{\cG}$.
\end{itemize}
If moreover we can find a minimizing sequence $\{D_n\}$ for $e_{\cG}$ in such a way that $(u,v) \in D_n$ implies $u,v \ge 0$ a.e., then we can find the sequence $\{(u_n,v_n)\}$ satisfying the additional condition
\begin{itemize}
\item[($v$)] $u_{n}^-, v_{n}^- \to 0$ a.e. in $\R^3$ as $n \to \infty$.
\end{itemize}
\end{proposition}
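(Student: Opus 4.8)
The plan is to use the $\star$-invariance of $E$ to replace any minimizing sequence by one contained in $\mathcal{P}$, then to apply Ghoussoub's minimax principle \cite[Theorem 3.2]{Ghou} to $E$ on the (in general incomplete) manifold $\mathcal{E}$, and finally to convert the resulting Palais-Smale sequence for $E$ on $\mathcal{S}$ into one for $J$ by means of Lemmas \ref{lem: tg} and \ref{lem: diff}. Write $m(u,v):=s_{(u,v)}\star(u,v)\in\mathcal{P}$ for $(u,v)\in\mathcal{E}$. First I would check that $m$ pushes families onto $\mathcal{P}$ without changing the minimax: the map $\eta(t,(u,v)):=(t\,s_{(u,v)})\star(u,v)$ is continuous on $[0,1]\times\mathcal{E}$ by \eqref{def s_u} and Lemma \ref{BaSo2.7}, takes values in $\mathcal{E}$ (the $\star$-action multiplies $\int\mu_1u^4+2\beta u^2v^2+\mu_2v^4$ by a positive factor), satisfies $\eta(0,\cdot)=\mathrm{id}$, and reduces to the identity on $B$ because $B\subset\mathcal{P}$ by \eqref{hp minmax}, whence $s_{(u,v)}=0$ there by Lemma \ref{lem: star}. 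Thus $\eta$ is admissible in Definition \ref{def hsf}(b), so $m(A)=\eta(1,A)\in\mathcal{G}$ for every $A\in\mathcal{G}$, with $m(A)\subset\mathcal{P}$, with $\max_{m(A)}E=\max_AE$ (by $\star$-invariance of $E$), and with $m$ preserving the componentwise sign. Hence any minimizing sequence $\{D_n\}$ for $e_{\mathcal{G}}$ may be replaced by $A_n:=m(D_n)\subset\mathcal{P}$; since $E=\tfrac16\|\cdot\|_{\mathcal{D}^{1,2}}^2$ on $\mathcal{P}$ by \eqref{expE}, the sets $A_n$ are $H^1$-bounded, eventually contained in $\{(u,v)\in\mathcal{P}:\|(u,v)\|_{\mathcal{D}^{1,2}}^2\le 6(e_{\mathcal{G}}+1)\}$.

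Next I would apply \cite[Theorem 3.2]{Ghou} to $E\in C^1(\mathcal{E})$ (Lemma \ref{lem: diff}) on $X=\mathcal{E}$, with the family $\mathcal{G}$, closed boundary $B$, the minimizing sequence $\{A_n\}$, and the dual set $F:=\{(u,v)\in\mathcal{E}:E(u,v)\ge e_{\mathcal{G}}\}$: every $A\in\mathcal{G}$ meets $F\setminus B$ at an $E$-maximum point of $A$ (which lies in $F$ since $\max_AE\ge e_{\mathcal{G}}$, and not in $B$ since $\sup_BE<e_{\mathcal{G}}$ by \eqref{hp minmax2}), and $\sup_BE\le e_{\mathcal{G}}\le\inf_FE$. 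The delicate point is that $\mathcal{E}$ is complete only when $\mathcal{E}=\mathcal{S}$; in general it is an open subset of the complete manifold $\mathcal{S}$. This is precisely where Lemma \ref{lem:su bordo} enters: since $E\to+\infty$ as $\pa\mathcal{E}$ is approached, every sublevel set $\{E\le M\}$ is closed in $\mathcal{S}$, hence complete, and a pseudo-gradient deformation that decreases $E$ cannot leave such a set, so the construction in \cite[Theorem 3.2]{Ghou} goes through verbatim. One obtains $\{(\tilde u_n,\tilde v_n)\}\subset\mathcal{E}$ with $E(\tilde u_n,\tilde v_n)\to e_{\mathcal{G}}$ and $\|dE(\tilde u_n,\tilde v_n)\|\to 0$ on $T_{(\tilde u_n,\tilde v_n)}\mathcal{S}=T_{(\tilde u_n,\tilde v_n)}\mathcal{E}$ --- this is $(i)$ --- and, crucially, $\dist\big((\tilde u_n,\tilde v_n),A_n\big)\to 0$.

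From the last property I would derive $(ii)$: picking $a_n=(a_n^1,a_n^2)\in A_n$ with $\|(\tilde u_n,\tilde v_n)-a_n\|\to 0$ and using that $A_n$ is $H^1$-bounded, that $(u,v)\mapsto\|(u,v)\|_{\mathcal{D}^{1,2}}^2$ and $(u,v)\mapsto\int\mu_1u^4+2\beta u^2v^2+\mu_2v^4$ are Lipschitz on $H^1$-bounded sets (the latter because $H^1\hookrightarrow L^4$), that $\int\mu_1(a_n^1)^4+2\beta(a_n^1)^2(a_n^2)^2+\mu_2(a_n^2)^4=\tfrac43\|a_n\|_{\mathcal{D}^{1,2}}^2$ on $\mathcal{P}$, and that $\|a_n\|_{\mathcal{D}^{1,2}}\ge\delta$ by Lemma \ref{minP}, the right-hand side of \eqref{def s_u} evaluated at $(\tilde u_n,\tilde v_n)$ tends to $1$; hence $s_n:=s_{(\tilde u_n,\tilde v_n)}\to 0$. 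Setting $(u_n,v_n):=s_n\star(\tilde u_n,\tilde v_n)\in\mathcal{P}$ by Lemma \ref{lem: star} gives $(iii)$, and $J(u_n,v_n)=J\big(s_n\star(\tilde u_n,\tilde v_n)\big)=E(\tilde u_n,\tilde v_n)\to e_{\mathcal{G}}$. For $\psi\in T_{(u_n,v_n)}\mathcal{S}$ the element $(-s_n)\star\psi$ lies in $T_{(\tilde u_n,\tilde v_n)}\mathcal{S}$ by Lemma \ref{lem: tg}, and Lemma \ref{lem: diff} together with \eqref{eq:star} and $s_n=s_{(\tilde u_n,\tilde v_n)}$ gives
\[
dJ(u_n,v_n)[\psi]=dE(\tilde u_n,\tilde v_n)\big[(-s_n)\star\psi\big];
\]
since $\|(-s_n)\star\psi\|^2=e^{-2s_n}\|\nabla\psi\|_{L^2}^2+\|\psi\|_{L^2}^2\le\max\{e^{-2s_n},1\}\,\|\psi\|^2$, this yields
\[
\|\nabla(J|_{\mathcal{S}})(u_n,v_n)\|\le\max\{e^{-s_n},1\}\,\|dE(\tilde u_n,\tilde v_n)\|\longrightarrow 0,
\]
which is $(iv)$. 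Finally, if $u,v\ge 0$ on every $D_n$, then also on every $A_n$, so from $a_n^1,a_n^2\ge 0$ one gets $\tilde u_n^-\le|\tilde u_n-a_n^1|$, $\tilde v_n^-\le|\tilde v_n-a_n^2|$, hence $\|\tilde u_n^-\|_{L^2},\|\tilde v_n^-\|_{L^2}\to 0$; since $(s\star w)^-=s\star(w^-)$ and $\star$ preserves the $L^2$ norm, $\|u_n^-\|_{L^2},\|v_n^-\|_{L^2}\to 0$, and a subsequence satisfies $u_n^-,v_n^-\to 0$ a.e., which is $(v)$.

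The manipulations of the $\star$-action and the Lipschitz estimates are routine; the real obstacle is the second step: making Ghoussoub's deformation argument work on the possibly incomplete manifold $\mathcal{E}$ (for which Lemma \ref{lem:su bordo} is tailor-made) and securing the additional conclusion $\dist((\tilde u_n,\tilde v_n),A_n)\to 0$ for the chosen minimizing sequence, since it is exactly the $H^1$-boundedness inherited from $A_n\subset\mathcal{P}$ that forces $s_n\to 0$, and only that makes Lemma \ref{lem: diff} transfer the smallness of $\|dE\|$ to that of $\|\nabla(J|_{\mathcal{S}})\|$.
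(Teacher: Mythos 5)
Your argument is correct and follows essentially the same route as the paper: push the minimizing sets onto $\mathcal{P}$ via the homotopy $(t,(u,v))\mapsto (t\,s_{(u,v)})\star(u,v)$, invoke \cite[Theorem 3.2]{Ghou} with Lemma \ref{lem:su bordo} supplying completeness of the sublevel sets of $E$, deduce $s_{(\tilde u_n,\tilde v_n)}\to 0$ from $\dist((\tilde u_n,\tilde v_n),A_n)\to 0$ together with the bounds from $A_n\subset\mathcal{P}$ and Lemma \ref{minP}, and transfer the Palais--Smale property from $E$ to $J$ through Lemmas \ref{lem: tg} and \ref{lem: diff}. The only detail the paper additionally records is that $\mathcal{E}$ need not be connected, which it handles by restricting $E$ to the connected component containing $B$ (available by \eqref{hp minmax}); apart from this minor point, and your slightly cleaner $L^2$-based treatment of the sign condition $(v)$, your proof matches the paper's.
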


\begin{remark}
The proposition says in particular that, if the assumptions of the minimax principle \cite[Theorem 3.2]{Ghou} are satisfied \emph{for the functional $E$}, then we find a Palais-Smale sequence \emph{for the functional $J$}, made of elements in $\mathcal{P}$.
\end{remark}

\begin{proof}
Let $\{D_n\} \subset \mathcal{G}$ be a minimizing sequence for $e_{\cG}$: $\max_{D_n} E \to e_{\cG}$. We define the map
\[
\eta:[0,1] \times \mathcal{E} \to \mathcal{E}, \quad \eta(t,(u,v)) = (t \, s_{(u,v)}) \star  (u,v).
\]
Since $s_{(u,v)}=0$ for any $(u,v) \in \mathcal{P}$ and $B \subset \mathcal{P}$, it is clear that $\eta(t,(u,v))=(u,v)$ for $(t,(u,v)) \in (\{0\} \times \mathcal{E}) \cup ([0,1] \times B)$. Furthermore, by \eqref{def s_u} and Lemma \ref{BaSo2.7}, $\eta$ is continuous. Thus, by Definition \ref{def hsf}
\[
A_n:= \eta(\{1\} \times D_n) = \{s_{(u,v)} \star (u,v): \ (u,v) \in D_n\} \in \mathcal{G}.
\]
Notice that $A_n \subset \mathcal{P}$ for every $n$.

Let $(w,z) \in A_n$. Then $(w,z)=s_{(u,v)} \star (u,v)$ for some $(u,v) \in D_n$, and clearly $E(w,z) = E(s_{(u,v)} \star (u,v)) = E(u,v)$. In particular, $\max_{A_n} E = \max_{D_n} E$, and hence $\{A_n\}$ is another minimizing sequence for $e_{\cG}$, with the property that $A_n \subset \mathcal{P}$ for every $n$. At this point we would like to apply the min-max principle \cite[Theorem 3.2]{Ghou} to this minimizing sequence. A word of caution is needed here, since $\mathcal{E}$ is neither complete, nor connected, and hence in principle the assumptions of \cite[Theorem 3.2]{Ghou} are not satisfied. On the other hand, the connectedness assumption can be avoided considering the restriction of $E$ on the connected component of $\cE$ containing $B$ (by \eqref{hp minmax}, such a connected component does exist; if $B=\emptyset$, we can simply choose a connected component arbitrarily). Regarding the completeness, what is really used in the deformation lemma \cite[Lemma 3.7]{Ghou} is that the sublevel sets $E^d:=\{(u,v) \in \cE: \, E(u,v) \le d\}$ are complete for every $d \in \R$. This follows by Lemma \ref{lem:su bordo}.
Hence, by the min-max principle, \cite[Theorem 3.2]{Ghou}, there exists a Palais-Smale sequence $\{(\tilde u_n,\tilde v_n)\}$ for $E$ on $\mathcal{E}$ at level $e_{\cG}$ with the property that $\dist_{H^1(\R^3,\R^2)}((\tilde u_n,\tilde v_n), A_n) \to 0$ as $n \to \infty$.

Let $s_n:= s_{(\tilde u_n,\tilde v_n)}$. We claim that
\begin{equation}\label{cl111}
s_n \to 0 \quad \text{as $n \to \infty$}.
\end{equation}
In order to prove the claim, we note that since $A_n$ is compact for every $n$, there exists $(w_n,z_n) \in A_n$ with $\dist_{H^1(\R^3,\R^2)}((\tilde u_n,\tilde v_n),A_n) = \|(w_n,z_n)-(\tilde u_n,\tilde v_n)\|_{H^1}$. The function $(w_n,z_n) \in \mathcal{P}$, and hence $s_{(w_n,z_n)} = 0$ for every $n$:
\begin{equation}\label{s=0}
\frac{ 4 \int_{\R^3} |\nabla w_{n}|^2 + |\nabla z_{n}|^2}{ 3 \int_{\R^3} \mu_1 w_n^4 + 2 \beta  w_n^2 z_n^2 + \mu_2 z_n^4 } = 1.
\end{equation}
Notice also that for any $(u,v) \in \cP$ it results
\[
E(u,v) = J(u,v) =  \frac16 \int_{\R^3} |\nabla u|^2+|\nabla v|^2,
\]
so that
\[
\max_{(u,v) \in A_n} E(u,v) \to c \quad \implies \quad \max_{(u,v)\in A_n} \int_{\R^3} |\nabla u|^2+|\nabla v|^2  \le 6c+1
\]
for every $n$ large, and in particular
\begin{equation}\label{11101}
\text{the sequence $\{(w_n,z_n)\}$ is bounded in $H^1$.}
\end{equation}

Now,
\begin{align*}
\int_{\R^3} |\nabla \tilde u_n|^2 &  = \int_{\R^3} |\nabla w_n|^2 +2 \nabla w_n\cdot \nabla (\tilde u_n-w_n) + |\nabla (\tilde u_n-w_n)|^2\\
& = \int_{\R^3} |\nabla w_n|^2 +o(1)
\end{align*}
as $n \to \infty$, since $\|\tilde u_n-w_n\|_{H^1} \to 0$ and $\|w_n\|_{H^1}$ is bounded. Similarly,
\begin{align*}
\int_{\R^3} |\nabla \tilde v_n|^2 = \int_{\R^3} |\nabla z_n|^2 +o(1).
\end{align*}
The fourth order terms can be treated in similar way, using the continuity of the embedding $H^1(\R^3,\R^2) \hookrightarrow L^4(\R^3,\R^2)$:
\begin{align*}
\int_{\R^3} \tilde u_n^4 & = \int_{\R^3} w_n^4 + 4\big(w_n + t(x) (\tilde u_n-w_n) \big)^3 (\tilde u_n-w_n) \\
& = \int_{\R^3}  w_n^4 + o(1),
\end{align*}
where $t(x)\in(0,1)$ comes from the Lagrange theorem; also
\begin{align*}
\int_{\R^3} \tilde v_n^4 = \int_{\R^3} z_n^4 +o(1),
\end{align*}
and finally
\begin{align*}
\int_{\R^3} \tilde u_n^2 \tilde v_n^2 &= \int_{\R^3} w_n^2 z_n^2 + 2 \big(w_n + t(x) (\tilde u_n-w_n) \big) \big(z_n + t(x) (\tilde v_n-z_n) \big)^2 (\tilde u_n-w_n) \\
& \qquad + 2 \int_{\R^3}  \big(w_n + t(x) (\tilde u_n-w_n) \big)^2 \big(z_n + t(x) (\tilde v_n-z_n) \big) (\tilde v_n-z_n) \\
& = \int_{\R^3} w_n^2 z_n^2 + o(1).
\end{align*}
Collecting together the previous estimates, we deduce that
\[
\int_{\R^3} |\nabla \tilde u_{n}|^2 + |\nabla \tilde v_{n}|^2 = \int_{\R^3} |\nabla w_{n}|^2 + |\nabla z_{n}|^2 +r_n
\]
with $r_n \to 0$, and analogously by \eqref{s=0}
\begin{align*}
\int_{\R^3} \mu_1 \tilde u_n^4 + 2 \beta  \tilde u_n^2 \tilde v_n^2 + \mu_2 \tilde v_n^4  & = \int_{\R^3} \mu_1 w_n^4 + 2 \beta  w_n^2 z_n^2 + \mu_2 z_n^4 + o(1) \\
& = \frac43 \left[ \int_{\R^3} \Big( |\nabla w_{n}|^2 + |\nabla z_{n}|^2\Big) +s_n \right]
\end{align*}
with $s_n \to 0$. In conclusion
\[
\frac{4 \int_{\R^3} |\nabla \tilde u_{n}|^2 + |\nabla \tilde v_{n}|^2}{3 \int_{\R^3} \mu_1 \tilde u_n^4 + 2 \beta  \tilde u_n^2 \tilde v_n^2 + \mu_2 \tilde v_n^4} = \frac{  \int_{\R^3} |\nabla w_{n}|^2 + |\nabla z_{n}|^2 + r_n}{  \int_{\R^3} |\nabla w_{n}|^2 + |\nabla z_{n}|^2 +s_n} =1+o(1)
\]
as $n \to \infty$, since the quantity $\int_{\R^3} |\nabla w_{n}|^2 + |\nabla z_{n}|^2$ is bounded from above (by \eqref{11101}) and from below (by Lemma \ref{minP}) by positive values. This proves claim \eqref{cl111}.
In particular, there exist $C_1,C_2>0$ such that
\begin{equation}\label{claim2}
C_1 \le e^{2 s_n} \le C_2 \qquad \text{for every $n$ large}.
\end{equation}

We are now finally ready to conclude. Let $(u_n,v_n):= s_n \star (\tilde u_n,\tilde v_n) \in \mathcal{P}$. We have $E(\tilde u_n,\tilde v_n) = J(u_n,v_n)$ for every $n$, and hence $J(u_n,v_n) \to e_{\cG}$. Furthermore
\begin{align*}
\|dJ(u_n,v_n)\|_*  &= \sup_{ \substack{  (\psi_1,\psi_2) \in T_{(u_n,v_n)} \cS \\ \|(\psi_1,\psi_2)\| = 1}} |dJ(u_n,v_n)[(\psi_1,\psi_2)] | \\
& = \sup_{ \substack{  (\psi_1,\psi_2) \in T_{(u_n,v_n)} \cS \\ \|(\psi_1,\psi_2)\| = 1}} \left|dJ(u_n,v_n)\Big[ s_n \star \big((-s_n) \star  (\psi_1,\psi_2)\big)\Big] \right|,
\end{align*}
where we denoted by $\|\cdot\|_*$ the dual norm in $(T_{(u_n,v_n)} \cS)^*$.
Now, by \eqref{claim2}
\begin{align*}
\underbrace{\min\{C_2^{-1},1\}}_{=: C_3^2}  \|(\psi_1,\psi_2)\|^2 & \le \|(-s_n) \star  (\psi_1,\psi_2)\|^2 \\
& = \sum_i \left(e^{-2s_n} \int_{\R^3}  |\nabla \psi_i|^2 + \int_{\R^3}  \psi_i^2\right) \le  \underbrace{\max\{C_1^{-1},1\}}_{=:C_4^2}   \|(\psi_1,\psi_2)\|^2,
\end{align*}
and recalling also Lemma \ref{lem: tg}, we deduce that
\begin{multline*}
\left\{ (-s_n) \star (\psi_1,\psi_2): \ (\psi_1,\psi_2) \in T_{(u_n,v_n)} \cS, \ \|(\psi_1,\psi_2)\| = 1\right\} \\
\subset \left\{ (\varphi_1,\varphi_2) \in T_{(\tilde u_n,\tilde v_n)} \cS: \ \|(\varphi_1,\varphi_2)\| \in [C_3, C_4]\right\}.
\end{multline*}
The previous argument and Lemma \ref{lem: diff} gives
\begin{align*}
\|dJ(u_n,v_n)\|_* & \le \sup_{ \substack{  (\varphi_1,\varphi_2) \in T_{(\tilde u_n,\tilde v_n)} \cS \\ \|(\varphi_1,\varphi_2)\| \in [C_3, C_4]}} \left|dJ(u_n,v_n)\big[ s_n \star (\varphi_1,\varphi_2)\big] \right|  \\
& =  \sup_{ \substack{  (\varphi_1,\varphi_2) \in T_{(\tilde u_n,\tilde v_n)} \cS \\ \|(\varphi_1,\varphi_2)\| \in [C_3, C_4]}} \left|dJ(s_n \star (\tilde u_n,\tilde v_n))\big[ s_n \star (\varphi_1,\varphi_2)\big] \right| \cdot \frac{\|(\varphi_1,\varphi_2)\|}{\|(\varphi_1,\varphi_2)\|} \\
&   \le C_4 \sup_{ \substack{  (\varphi_1,\varphi_2) \in T_{(\tilde u_n,\tilde v_n)} \cS \\ \|(\varphi_1,\varphi_2)\| \in [C_3, C_4]}} \frac{|dE(\tilde u_n,\tilde v_n)[(\varphi_1,\varphi_2)] |}{\|(\varphi_1,\varphi_2)\|} \to 0
\end{align*}
as $n \to \infty$, as $\{(\tilde u_n,\tilde v_n)\}$ is a Palais-Smale sequence for $E$. To sum up, $\{(u_n,v_n)\}$ is the desired Palais-Smale sequence for $J$ on $\cS$ at level $e_{\cG}$, with $(u_n,v_n) \in \mathcal{P}$ for every $n$.

It remains only to show that, if the original minimizing sequence $\{D_n\}$ is such that $(u,v) \in D_n$ implies $u,v \ge 0$ a.e. in $\R^3$, then $u_{n}^-, v_{n}^- \to 0$ a.e. in $\R^3$. This is a simple consequence of the fact that, under this additional assumptions, we have also $(u,v) \in A_n$ implies $u,v \ge 0$ a.e. in $\R^3$. Thus, by $\dist_{H^1(\R^3,\R^2)}((\tilde u_n,\tilde v_n),A_n) \to 0$, we deduce that $\tilde u_{n}^-, \tilde v_{n}^- \to 0$ a.e. in $\R^3$, and in turn this implies the desired conclusion.
\end{proof}


For future convenience, we observe the validity of the following variant of Proposition \ref{prop: PS}, whose proof can be obtained from the previous one with minor modifications.

\begin{proposition}\label{rem: on s_n}
Let $\{(\tilde u_n,\tilde v_n)\}$ be a Palais-Smale sequence for $E$ at level $e \in (0,+\infty)$, and let us suppose that for every $n$ there exists $(w_n,z_n) \in \mathcal{P}$ such that:
\begin{itemize}
\item[($a$)] $\|(\tilde u_n, \tilde v_n) - (w_n,z_n)\|_{H^1} \to 0$ as $n \to \infty$;
\item[($b$)] $\|(w_n,z_n)\|_{\cD^{1,2}} \le C$.
\end{itemize}
Then $s_n:= s_{(\tilde u_n,\tilde v_n)}$ tends to $0$ as $n \to \infty$, and $(u_n,v_n):=s_n \star (\tilde u_n,\tilde v_n)$ satisfies:
\begin{itemize}
\item[($i$)] $(u_n,v_n) \in \mathcal{P}$ for every $n$;
\item[($ii$)] $J(u_n,v_n) \to e$ as $n \to \infty$;
\item[($iii$)] $\|\nabla (J|_{\cS})(u_n,v_n)\| \to 0$ as $n \to \infty$.
\end{itemize}
If moreover $w_n,z_n \ge 0$ a.e., then we have also
\begin{itemize}
\item[($iv$)] $u_{n}^-, v_{n}^- \to 0$ a.e. in $\R^3$ as $n \to \infty$.
\end{itemize}
\end{proposition}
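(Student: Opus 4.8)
The plan is to reuse the core of the proof of Proposition~\ref{prop: PS} almost verbatim: hypotheses~(a) and~(b) supply precisely the data that the minimizing sets $A_n$ provided there. Two preliminary remarks are in order. Since $(w_n,z_n)\in\mathcal{P}\subset\mathcal{S}$ we have $\|w_n\|_{L^2}^2=a_1^2$ and $\|z_n\|_{L^2}^2=a_2^2$, so~(b) upgrades to $H^1$-boundedness of $\{(w_n,z_n)\}$ --- this is the analogue of~\eqref{11101}, obtained here for free rather than from an energy estimate on the $A_n$'s. Moreover $(w_n,z_n)\in\mathcal{P}$ gives, by Lemma~\ref{minP}, the uniform lower bound $\|(w_n,z_n)\|_{\mathcal{D}^{1,2}}\ge\delta>0$.

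The main point, and the only step that requires work, is to prove $s_n\to0$; this is exactly the computation carried out in the passage following~\eqref{s=0}. From $(w_n,z_n)\in\mathcal{P}$ one has $4\int_{\R^3}|\nabla w_n|^2+|\nabla z_n|^2=3\int_{\R^3}\mu_1w_n^4+2\beta w_n^2z_n^2+\mu_2z_n^4$. Writing $\tilde u_n=w_n+(\tilde u_n-w_n)$ and $\tilde v_n=z_n+(\tilde v_n-z_n)$, expanding the Dirichlet integrals and the three quartic integrals (the latter via the mean value theorem, just as in the displayed estimates for $\int\tilde u_n^4$, $\int\tilde v_n^4$, $\int\tilde u_n^2\tilde v_n^2$ in the proof of Proposition~\ref{prop: PS}) and using~(a) together with the $H^1$-boundedness just noted, one obtains that $\int_{\R^3}|\nabla\tilde u_n|^2+|\nabla\tilde v_n|^2=\int_{\R^3}|\nabla w_n|^2+|\nabla z_n|^2+o(1)$ and $\int_{\R^3}\mu_1\tilde u_n^4+2\beta\tilde u_n^2\tilde v_n^2+\mu_2\tilde v_n^4=\tfrac43\int_{\R^3}|\nabla w_n|^2+|\nabla z_n|^2+o(1)$. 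Plugging these into the defining relation~\eqref{def s_u} for $s_n:=s_{(\tilde u_n,\tilde v_n)}$ and using that $\int_{\R^3}|\nabla w_n|^2+|\nabla z_n|^2$ is bounded above by~(b) and below by Lemma~\ref{minP}, we get $e^{s_n}=1+o(1)$, i.e.\ $s_n\to0$; in particular $0<C_1\le e^{2s_n}\le C_2$ for $n$ large, as in~\eqref{claim2}.

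Once $s_n\to0$ is established, the remaining conclusions follow exactly as at the end of the proof of Proposition~\ref{prop: PS}. Item~(i) holds because $s_n\star(\tilde u_n,\tilde v_n)=s_{(\tilde u_n,\tilde v_n)}\star(\tilde u_n,\tilde v_n)\in\mathcal{P}$ by Lemma~\ref{lem: star}. Item~(ii) is immediate from $J(u_n,v_n)=J(s_{(\tilde u_n,\tilde v_n)}\star(\tilde u_n,\tilde v_n))=E(\tilde u_n,\tilde v_n)\to e$. For~(iii) one reproduces the dual-norm estimate: Lemma~\ref{lem: tg} identifies $T_{(u_n,v_n)}\mathcal{S}$ with $T_{(\tilde u_n,\tilde v_n)}\mathcal{S}$ through $\varphi\mapsto s_n\star\varphi$, bound~\eqref{claim2} makes this a uniform norm equivalence, and Lemma~\ref{lem: diff} rewrites $dJ(u_n,v_n)[s_n\star\varphi]$ as $dE(\tilde u_n,\tilde v_n)[\varphi]$; combining these gives $\|dJ(u_n,v_n)\|_*\le C_4\sup_{\|\varphi\|\in[C_3,C_4]}|dE(\tilde u_n,\tilde v_n)[\varphi]|/\|\varphi\|\to0$, since $\{(\tilde u_n,\tilde v_n)\}$ is Palais-Smale for $E$. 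Finally, for~(iv): if $w_n,z_n\ge0$ a.e.\ then $0\le\tilde u_n^-\le|\tilde u_n-w_n|$ and likewise for $\tilde v_n^-$, so $\tilde u_n^-,\tilde v_n^-\to0$ in $H^1$ by~(a); since $u_n^-=s_n\star\tilde u_n^-$ and $v_n^-=s_n\star\tilde v_n^-$ with $s_n\to0$, Lemma~\ref{BaSo2.7} yields $u_n^-,v_n^-\to0$ in $H^1$, hence a.e.\ in $\R^3$ along a subsequence.

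The only genuine obstacle is the step $s_n\to0$, and there all the estimates are already performed in the proof of Proposition~\ref{prop: PS}; the single adjustment is that the $H^1$-boundedness of the approximating sequence $\{(w_n,z_n)\}$, which previously came from the energy bound on the minimizing sets $A_n$, is now provided directly by hypothesis~(b) together with the $L^2$-normalization built into $\mathcal{P}\subset\mathcal{S}$.
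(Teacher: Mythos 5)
Your proposal is correct and follows exactly the route the paper intends: the paper gives no separate proof of Proposition~\ref{rem: on s_n}, stating only that it follows from the proof of Proposition~\ref{prop: PS} with minor modifications, and you have carried out precisely those modifications (hypotheses (a)--(b) plus the $L^2$-normalization and Lemma~\ref{minP} replacing the bounds previously extracted from the minimizing sets $A_n$ in establishing $s_n\to 0$, with (i)--(iv) then obtained as before). One negligible imprecision: the pointwise bound $0\le \tilde u_n^-\le|\tilde u_n-w_n|$ gives $\tilde u_n^-\to 0$ in $L^2$ rather than in $H^1$ as you claim, but this already yields the a.e.\ convergence (along a subsequence) needed for (iv), since the $\star$ operation preserves $L^2$ norms and signs.
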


The difference between the above propositions and Theorem \ref{thm:PS} (and \cite[Theorem 2.1]{BaSo}) stays in the fact that here one has to search for minimax structures of $E$ on $\mathcal{E}$, instead of $J$ on $\mathcal{P}$. This difference is only apparent, since by using Proposition \ref{prop: PS} we can easily prove Theorem~\ref{thm:PS}.\\


%

\begin{altproof}{Theorem~\ref{thm:PS}}
We show that, in the present setting, the assumptions of Proposition \ref{prop: PS} are satisfied, and therefore the thesis follows.

Let $\mathcal{G}$ be the smallest family of compact subsets of $\mathcal{E}$ with closed boundary $B$ (i.e.\ every set in $\mathcal{G}$ contains $B$), which contains $\mathcal{F}$ and is homotopy stable with respect to homotopies $\eta \in C([0,1] \times \mathcal{E},\mathcal{E})$ fixing $(\{0\} \times \mathcal{E}) \cup ([0,1] \times B)$. We check that
\[
  \mathcal{G} = \left\{ \eta(\{1\} \times A) \left| \begin{array}{l} A \in \mathcal{F}, \text{ and }  \text{$\eta \in C([0,1] \times \mathcal{E},\mathcal{E})$ satisfies}\\ \text{$\eta(t,(u,v))=(u,v)$ for $(t,(u,v)) \in (\{0\} \times \mathcal{E}) \cup ([0,1] \times B)$} \\
\end{array}\right.\right\}.
\]
Firstly, since $A \supset B$ for every $A \in \mathcal{F}$, we have that any $D=\eta(\{1\} \times A) \in \mathcal{G}$ contains $\eta(\{1\} \times B) = B$.

Secondly, we have to prove that for every $\eta \in C([0,1] \times \mathcal{E},\mathcal{E})$ fixing $(\{0\} \times \mathcal{E}) \cup ([0,1] \times B)$ and every $D \in \mathcal{G}$, it results that $\eta(\{1\} \times D) \in \mathcal{G}$, i.e. $\eta(\{1\} \times D) = \sigma(\{1\} \times A)$ for some $A \in \mathcal{F}$ and some $\sigma \in C([0,1] \times \mathcal{E},\mathcal{E})$ fixing $(\{0\} \times \mathcal{E}) \cup ([0,1] \times B)$. Since $D \in \mathcal{G}$, there exists $A \in \mathcal{F}$ and $\tau \in C([0,1] \times \mathcal{E},\mathcal{E})$ fixing $(\{0\} \times \mathcal{E}) \cup ([0,1] \times B)$ such that $D=\tau(\{1\} \times A)$. Thus, defining
\[
\sigma(t,(u,v)) = \eta(t, \tau(t,(u,v))),
\]
it follows that $\eta(\{1\} \times D) = \sigma(\{1\} \times A)$, and $\sigma$ is the desired homotopy.

Having checked that $\mathcal{G}$ is a homotopy stable family of compact subsets of $\mathcal{E}$ with closed boundary $B \subset \mathcal{P}$, we consider the associated minimax level
\[
e_\mathcal{G}:= \inf_{D \in \mathcal{G}} \, \max_{(u,v) \in D} E(u,v).
\]

We show that $e_{\mathcal{G}} = c_{\mathcal{F}}$. Recalling that $E(u,v) = J(u,v)$ for $(u,v) \in \mathcal{P}$, this will imply that $ \max\{\sup E(B),0\}<e_{\mathcal{G}} <+\infty$, and permits to apply Proposition \ref{prop: PS}, yielding the thesis of the theorem.

Now, on one side $\mathcal{F} \subset \mathcal{G}$ and $\max_A J = \max_A E$ for every $A \in \mathcal{F}$; therefore,
\begin{equation}\label{cG<cF}
c_\mathcal{F} =  \inf_{A \in \mathcal{F}} \, \max_A J = \inf_{A \in \mathcal{F}} \, \max_A E  \ge e_{\mathcal{G}}.
\end{equation}

In the opposite direction, we prove that for every $\eps>0$ there exists $A \in \mathcal{F}$ such that $\max_{A} J < e_{\mathcal{G}} +\eps$. This implies that $c_{\mathcal{F}} \le e_{\mathcal{G}}$, and, together with \eqref{cG<cF}, completes the proof. For $\eps>0$, let $D \in \mathcal{G}$ with $\max_D E < e_{\mathcal{G}} +\eps$. By definition of $\cG$, it results $D=\eta(\{1\} \times A')$ for some $\eta \in C([0,1] \times \mathcal{E},\mathcal{E})$ fixing $(\{0\} \times \mathcal{E}) \cup ([0,1] \times B)$ and some $A' \in \mathcal{F}$. Let us consider
\[
\tau:[0,1] \times \mathcal{E} \to \mathcal{E}, \qquad \tau(t,(u,v)) = (t s_{(u,v)}) \star(u,v),
\]
and
\[
\sigma:[0,1] \times \mathcal{P} \to \mathcal{P}, \qquad \sigma(t,(u,v)):= \tau(1,\eta(t,(u,v))) = s_{\eta(t,(u,v))} \star \eta(t,(u,v)).
\]
By Lemma \ref{BaSo2.7} and \eqref{def s_u}, it is not difficult to check that $\sigma \in C([0,1] \times \mathcal{P},\mathcal{P})$, and clearly $\sigma$ fixes $(\{0\} \times \mathcal{P}) \cup ([0,1] \times B)$. But then $A:=\sigma(\{1\} \times A') \in \mathcal{F}$ by definition of homotopy stable family. The crucial observation is that $A = \tau(\{1\} \times D)$. 
This is important since by definition
\[
E(u,v) = J(s_{(u,v)} \star (u,v)) = J(\tau(1, (u,v)))
\]
for every $(u,v) \in D$, which implies that $E(D) = J(\tau (\{1\} \times D)) = J(A)$. In particular, this gives $\max_A J = \max_D E < e_{\cG} +\eps$, and, since $A \in \mathcal{F}$, completes the proof.
\end{altproof}

To conclude this section, we observe that for the proof of Theorem \ref{thm:PSequi}, which is the equivariant version of Theorem \ref{thm:PS}, we can simply use an equivariant minimax theorem (see e.g.\ \cite[Theorem 7.2]{Ghou}) instead of the classical version \cite[Theorem 3.2]{Ghou}. The rest of the argument remains untouched.

\section{A partial Palais-Smale condition}\label{sec: PS cond}

From now on we focus on the symmetric system \eqref{system} with $a_1=a_2=a$ and $\mu_1=\mu_2=\mu$. Without loss of generality we fix $\mu=1$; this choice simplifies some expressions.

We recall that problem \eqref{syst scalar} has a unique positive radial solution $w_0$ (for a suitable $\lambda<0$). We denote by $\ell$ the energy level associated to $w_0$, that is
\[
  \ell:= I(w_0), \quad I(w):= \int_{\R^3} \frac12|\nabla  w|^2 - \frac14 w^4.
\]
It is well known that $\ell>0$ is the ground state energy level of \eqref{syst scalar} (see Section 2 in \cite{BaJeSo} for a complete discussion).

We wish to investigate the behavior of any Palais-Smale sequence for the constrained functional $J_\beta|_{\cP_\be}$. We start with a preliminary remark.

\begin{lemma}\label{lem: basic J}
  The constrained functional $J_\be|_{\cP_\be}$ is bounded from below and coercive.
\end{lemma}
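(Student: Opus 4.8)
The plan is to exploit the defining equation of $\cP_\be$ to rewrite $J_\be$ on this manifold in a much simpler form. First I would note that for $(u,v)\in\cP_\be$ the constraint reads
\[
  \int_{\R^3} |\nabla u|^2 + |\nabla v|^2 = \frac34 \int_{\R^3} \mu_1 u^4 + 2\be u^2 v^2 + \mu_2 v^4,
\]
and inserting this into the definition of $J_\be$ gives
\[
  J_\be(u,v) = \Bigl(\tfrac12 - \tfrac13\Bigr)\int_{\R^3} |\nabla u|^2+|\nabla v|^2 = \tfrac16\,\|(u,v)\|_{\cD^{1,2}}^2 \qquad \text{for every } (u,v)\in\cP_\be,
\]
which is exactly the first line of \eqref{expE} specialised to the case $s_{(u,v)}=0$.

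From this identity both assertions are immediate. Boundedness from below holds because $J_\be\ge 0$ on $\cP_\be$; one may even be slightly more precise and use Lemma \ref{minP} to get $J_\be\ge\frac16\delta^2>0$ there. For coercivity the only thing to keep in mind is that $\cP_\be\subset\cS$, so that the $L^2$-norms are frozen and $\|(u,v)\|^2 = \|(u,v)\|_{\cD^{1,2}}^2 + a_1^2 + a_2^2$ for every $(u,v)\in\cS$. Hence, along any sequence in $\cP_\be$ with $\|(u,v)\|\to\infty$ one necessarily has $\|(u,v)\|_{\cD^{1,2}}\to\infty$, and therefore $J_\be(u,v) = \frac16\bigl(\|(u,v)\|^2 - a_1^2 - a_2^2\bigr)\to+\infty$; equivalently, every sublevel set $\{(u,v)\in\cP_\be : J_\be(u,v)\le d\}$ is bounded in $H^1$.

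I do not anticipate any genuine obstacle here: the substantive step, namely the identity $J_\be = \frac16\|\cdot\|_{\cD^{1,2}}^2$ on $\cP_\be$, has essentially already been recorded in \eqref{expE}, and the remainder is just bookkeeping with the pinned $L^2$-norms. I would nonetheless isolate the statement as a lemma, since this reformulation is the natural starting point for analysing the Palais-Smale sequences of $J_\be|_{\cP_\be}$ in the rest of the section.
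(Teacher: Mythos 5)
Your proposal is correct and follows essentially the same route as the paper: both substitute the defining identity of $\cP_\be$ into $J_\be$ to obtain $J_\be(u,v)=\frac16\int_{\R^3}|\nabla u|^2+|\nabla v|^2$ there, from which nonnegativity is immediate and coercivity follows since the $L^2$-norms are fixed on $\cS$. Your extra remarks (the lower bound via Lemma \ref{minP} and the explicit relation $\|(u,v)\|^2=\|(u,v)\|_{\cD^{1,2}}^2+a_1^2+a_2^2$) merely spell out details the paper leaves implicit.
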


\begin{proof}
The statement follows straightforwardly from the fact that
\begin{equation}\label{J on P}
  J_\be(u,v) = \frac16 \int_{\R^3} |\nabla u|^2 + |\nabla v|^2 = \frac18\int_{\R^3}  u^4  + 2\beta u^2 v^2 +  v^4
\end{equation}
for any $(u,v) \in \cP_\be$.
\end{proof}

The lemma implies that, if we have a Palais-Smale sequence $\{(u_n,v_n)\}$ for $J_\be$ at a finite level, and $(u_n,v_n) \in \mathcal{P}_\beta$ for every $n$, then $\{(u_n,v_n)\}$ is bounded.
The existence of bounded Palais-Smale sequences for problems with $L^2$-constraints is a highly non-trivial fact, hence working on the constraint $\cP_\be$ is extremely helpful.

\begin{proposition}\label{prop: PS cond}
Let $\beta \le 0$ be fixed. Let $\{(u_n,v_n)\}$ be a Palais-Smale sequence for $J_\be|_\cS$ at level $c \in (0,+\infty)$, with
\[
  u_n^-,v_n^- \to 0 \quad \text{a.e. in $\R^3$}, \quad \text{and} \quad  (u_n,v_n) \in \mathcal{P}_\beta.
\]

a) If $c \neq \ell$, then up to a subsequence $(u_n,v_n) \to (u,v)$ strongly in $H^1(\R^3,\R^2)$, and $(u,v)$ is a solution to \eqref{system} for some $\lambda_1,\lambda_2<0$. \\

b) If $c=\ell$, then one of the following alternatives occurs:
\begin{itemize}
\item[($i$)] $(u_n,v_n) \to (u,v)$ strongly in $H^1(\R^3,\R^2)$ up to a subsequence, where $(u,v)$ is a solution to \eqref{system} for some $\lambda_1,\lambda_2<0$ with $J_\be(u,v) = \ell$.
\item[($ii$)] either $u_n \to w_0$ strongly in $H^1(\R^3)$ and $v_n \to 0$ strongly in $\mathcal{D}^{1,2}(\R^3)$, or $v_n \to w_0$ strongly in $H^1(\R^3)$ and $u_n \to 0$ strongly in $\mathcal{D}^{1,2}(\R^3)$, up to a subsequence.
\end{itemize}
\end{proposition}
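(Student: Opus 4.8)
The plan is to exploit the fact that the sequence lies on $\cP_\beta$, so it is bounded in $H^1$ by Lemma~\ref{lem: basic J}, and then to analyze weak limits together with the Lagrange multipliers. After passing to a subsequence we have $(u_n,v_n)\wc(u,v)$ in $H^1$, strongly in $L^4_{\loc}$ and a.e.; since $u_n^-,v_n^-\to0$ a.e., the limits satisfy $u,v\ge0$. From the Palais--Smale condition on $\cS$ we extract $\lambda_{1,n},\lambda_{2,n}\in\R$ (Lagrange multipliers) with $-\Delta u_n-\lambda_{1,n}u_n=u_n^3+\beta u_nv_n^2+o(1)$ in $H^{-1}$, and similarly for $v_n$; testing with $u_n$ and $v_n$ and using boundedness one sees $\lambda_{1,n},\lambda_{2,n}$ are bounded, hence converge (up to a subsequence) to some $\lambda_1,\lambda_2$. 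Passing to the limit in the equations shows $(u,v)$ solves the system (in the weak sense) with these multipliers.

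\textbf{Splitting and the key dichotomy.} The heart of the matter is to control the possible loss of mass. Write $u_n=u+\phi_n$, $v_n=v+\psi_n$ with $\phi_n,\psi_n\wc0$. Using the Brezis--Lieb lemma for the $L^4$, mixed $L^2L^2$ and gradient terms, together with the equations, one derives a splitting of the energy: $J_\beta(u_n,v_n)=J_\beta(u,v)+\widetilde J(\phi_n,\psi_n)+o(1)$, where the ``remainder energy'' $\widetilde J$ drops the lower-order ($\lambda$) terms, i.e. it is the $\mathcal{D}^{1,2}$-level functional $\frac12\int|\nabla\phi_n|^2+|\nabla\psi_n|^2-\frac14\int\phi_n^4+2\beta\phi_n^2\psi_n^2+\psi_n^4$. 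Moreover $\phi_n,\psi_n$ form a Palais--Smale-type sequence for this limit functional in $\mathcal{D}^{1,2}$, and (using $\beta\le0$ and the sign information) the vanishing lemma / concentration-compactness forces: either $\phi_n,\psi_n\to0$ strongly in $\mathcal{D}^{1,2}$, or a nontrivial bubble of $-\Delta w=w^3$ splits off — but such a bubble carries a fixed positive amount of $\mathcal{D}^{1,2}$-energy, namely $\ge 3\ell$ per component after accounting for the Pohozaev/Nehari normalization along $\cP_\beta$. Since $\beta\le0$ the cross term only helps, so in the non-vanishing case the remaining energy is at least $\ell$, giving $c=J_\beta(u,v)+(\text{something}\ge\ell)$. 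Combined with $J_\beta(u,v)\ge0$ (here one needs that $(u,v)$, being a solution with $\lambda_i\le0$, has nonnegative energy; the sign $\lambda_i<0$ is obtained a posteriori from the Pohozaev-type identities plus positivity and $\beta\le0$), this yields the clean trichotomy: if $c\ne\ell$ there is no room for a bubble, so $\phi_n,\psi_n\to0$ strongly, $(u_n,v_n)\to(u,v)$ in $H^1$, and $(u,v)$ is a genuine (nontrivial, since $c>0$) solution; if $c=\ell$, then either no bubble splits off (case (i)) or exactly one bubble equal to $w_0$ splits off in one component while the other component and the weak limit vanish in $\mathcal{D}^{1,2}$, and in fact the remaining $L^2$-mass is carried entirely by that bubble, forcing $u_n\to w_0$ in $H^1$ (not merely $\mathcal{D}^{1,2}$) — case (ii).

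\textbf{Upgrading convergence and ruling out degeneracies.} Several bookkeeping points need care. First, to pass from $\mathcal{D}^{1,2}$-convergence of the remainder to $H^1$-convergence of $(u_n,v_n)$ one must recover the $L^2$-norms: since $\|u_n\|_{L^2}^2=a^2$ is fixed, $\|u\|_{L^2}^2+\lim\|\phi_n\|_{L^2}^2\le a^2$, with equality by weak-$L^2$ decomposition only if no mass escapes to spatial infinity; the partial Palais--Smale analysis (testing the equation, using $\lambda_{1,n}\to\lambda_1<0$ so that $-\Delta-\lambda_{1,n}$ is uniformly coercive) shows the weak limit captures all the $L^2$-mass whenever $\phi_n\to0$ in $\mathcal{D}^{1,2}$, hence strong $H^1$ convergence. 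In case (ii), the escaping bubble is a solution of $-\Delta w=w^3$ in $\mathcal{D}^{1,2}(\R^3)$ with finite $L^2$-norm equal to the leftover $a^2$; by uniqueness of the positive solution of \eqref{syst scalar} this bubble is $w_0$, and since it carries exactly mass $a^2$ the corresponding component converges strongly in $L^2$ (after translating — but here radial symmetry prevents translations, so the bubble is centered and convergence is genuinely strong in $H^1$), while the other component retains no mass, giving $\to0$ in $\mathcal{D}^{1,2}$ (its $L^2$-norm need not vanish a priori, but the Pohozaev identity on $\cP_\beta$ combined with $\beta\le 0$ forces its gradient to vanish). Second, one must exclude that \emph{both} components bubble or that a half-bubble pair $(w_0,w_0)$-type object appears; the strict competition $\beta\le0$ is what kills the mixed possibilities, since a two-component bubble would need energy $\ge 2\ell>\ell$, contradicting $c=\ell$, and would also violate $c<2\ell$ implicitly (this is where knowing the level is exactly $\ell$, not just $\le\ell$, is used). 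Third, the sign $\lambda_1,\lambda_2<0$: from the limit equation and the Pohozaev identity (valid since $(u,v)\in\cP_\beta$ in the limit) one gets $\lambda_i\int u^2=\int(\text{positive lower-order stuff})>0$ unless the component is trivial, and a separate argument using $\beta\le0$ shows that a nontrivial positive solution of the system cannot have $\lambda_i=0$ (else $-\Delta u=u^3+\beta uv^2\le u^3$, and a Liouville/Pohozaev contradiction with the $L^2$-constraint in $\R^3$ follows). \textbf{The main obstacle} I expect is precisely this last cluster of issues: establishing the energy splitting with sharp constants so that the \emph{only} non-compactness is a single $w_0$-bubble at level exactly $\ell$, and simultaneously upgrading $\mathcal{D}^{1,2}$ to $H^1$ by showing no $L^2$-mass leaks away — equivalently, proving that the weak limit plus the (at most one) bubble account for the full mass $a^2$. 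Everything else (boundedness, weak convergence, Lagrange multipliers bounded, passing to the limit in the equation) is routine.
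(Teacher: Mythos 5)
There is a genuine gap: your proof is organized around a Brezis--Lieb/concentration-compactness ``bubbling'' dichotomy that is not the operative mechanism here, and the two steps that actually decide the dichotomy are left unproved. Since the sequence is radial and bounded in $H^1_{\rad}(\R^3,\R^2)$, the embedding $H^1_{\rad}(\R^3)\hookrightarrow L^4(\R^3)$ is compact, so the remainders $\phi_n=u_n-\bar u$, $\psi_n=v_n-\bar v$ converge to $0$ strongly in $L^4$ and no ``bubble of $-\Delta w=w^3$'' can split off; your energy bookkeeping (``$\ge 3\ell$ per component'', ``a two-component bubble needs $2\ell$'', ``the bubble carries exactly the leftover mass $a^2$, hence $L^2$-strong convergence'') has no basis, and in the genuine non-compact scenario nothing converges to a nontrivial profile at all: the bad component tends to $0$ in $\mathcal{D}^{1,2}$ and $L^4$ while its $L^2$-mass of size $a^2$ disperses. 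The paper's dichotomy is instead driven by the signs of the Lagrange multipliers: testing \eqref{gradient to zero} with $(u_n,v_n)$ and using the $\cP_\beta$ relation together with the lower bound $\int|\nabla u_n|^2+|\nabla v_n|^2\ge 5c>0$ gives $\la_1+\la_2<0$, so at least one multiplier, say $\la_1$, is negative; coercivity of $-\Delta-\la_1$ then upgrades $u_n\to\bar u$ to strong $H^1$ convergence (this is the mechanism you mention only in passing, and it is what identifies $\bar u=w_0$ with full mass in case (ii), not a mass count for an escaping bubble). You never establish this sign information, which is the hinge of the whole argument.

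The second missing ingredient is the treatment of the case $\la_2\ge 0$: one must show the weak limit $\bar v$ is identically zero. Your proposed shortcut (``a nontrivial positive solution cannot have $\la_i=0$, else a Liouville/Pohozaev contradiction with the $L^2$-constraint follows'') fails precisely because $\bar v$ need not satisfy the mass constraint -- losing that constraint in the limit is the whole difficulty -- and it also does not cover $\la_2>0$. In the paper this step uses that $\la_1<0$ forces exponential decay of $\bar u$, so that $\bar v\ge 0$ satisfies $-\Delta\bar v+c(x)\bar v\ge 0$ with $0\le c(x)=-\beta\bar u^2(x)$ exponentially small, and a Liouville-type lemma (\cite[Lemma 3.12]{BaSo}) then gives $\bar v\equiv 0$; only afterwards does one compute, using $L^4$-compactness and \eqref{J on P}, that the level must equal $\ell$, and use $(u_n,v_n)\in\cP_\beta$ together with $w_0\in\mathcal{M}$ to get $\|v_n\|_{\mathcal{D}^{1,2}}\to 0$. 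Your sketch contains no substitute for this Liouville step, and your assertion that ``the Pohozaev identity on $\cP_\beta$ combined with $\beta\le 0$ forces its gradient to vanish'' is exactly the statement that needs proof. So while the routine parts (boundedness via $\cP_\beta$, bounded and convergent multipliers, passing to the limit in the equation, coercivity when $\la_i<0$) are fine, the core of the proposal -- the splitting dichotomy, the energy quantization, and the vanishing of the weak limit in the non-compact case -- would not go through as written.
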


\begin{proof}
We refine the analysis from \cite{BaJeSo, BaSo} to prove the convergence of the Palais-Smale sequences. The phrase ``up to a subsequence" will be implicitly understood in this proof.

The weak convergence of $(u_n,v_n)$ to a limit $(\bar u, \bar v) \in H^1(\R^3,\R^2)$ follows directly from Lemma \ref{lem: basic J}. By compactness of the embedding $H^1_{\rad}(\R^3) \hookrightarrow L^4(\R^3)$, $(u_n,v_n) \to(\bar u, \bar v)$ strongly in $L^4$ and a.e.\ in $\R^3$. Notice also that by \eqref{J on P}
\begin{equation}\label{sequence bdd below}
  \int_{\R^3} |\nabla u_n|^2 + |\nabla v_n|^2 \ge 5 c >0
\end{equation}
for every $n$ sufficiently large. Now, since $dJ_\be |_{\cS}(u_n,v_n) \to 0$ (and using the fact that the problem is invariant under rotation), by the Lagrange multipliers rule
there exist two sequences of real numbers $(\lambda_{1,n})$ and $(\lambda_{2,n})$ such that
\begin{multline}\label{gradient to zero}
\int_{\R^3} \left( \nabla u_n \cdot  \nabla \varphi + \nabla v_n \cdot \nabla \psi -  u_n^3 \varphi - v_n^3 \psi - \beta u_n v_n(u_n \psi + v_n \varphi) \right) \\
-\int_{\R^3} \left(\lambda_{1,n} u_n \varphi +\lambda_{2,n} v_n \psi\right) = o(1) \|(\varphi,\psi)\|_{H^1}
\end{multline}
for every $(\varphi, \psi) \in H^1(\R^3,\R^2)$, with $o(1) \to 0$ as $n \to \infty$. Using the boundedness of $\{(u_n,v_n)\}$ and equation \eqref{sequence bdd below}, one can prove as in \cite[Lemma 3.8]{BaJeSo} that $\lambda_{1,n} \to \lambda_1$ and $\lambda_{2,n} \to \lambda_2$, and $\la_1+\la_2<0$, hence at least one of $\lambda_1$ and $\lambda_2$ is a strictly negative value. If $\lambda_1 <0$ (resp. $\lambda_2<0$), then $u_n \to \bar u$ (resp. $v_n \to \bar v$) strongly in $H^1(\R^3)$ by \cite[Lemma 3.9]{BaJeSo}. Notice also that, by weak and a.e.\ convergence and by \eqref{gradient to zero}, the limit $(\bar u, \bar v) \in H^1(\R^3,\R^2)$ solves
\begin{equation}\label{system+}
\begin{cases}
-\Delta \bar u - \lambda_1 \bar u =   \bar{u}^3+ \beta \bar u \bar{v}^2 & \text{in $\R^3$} \\
-\Delta \bar v- \lambda_2 \bar v =  \bar{v}^3 +\beta \bar{u}^2 \bar v & \text{in $\R^3$}\\
\bar u \ge 0, \ \bar v \ge 0 & \text{in $\R^3$},
\end{cases}\qquad \text{for some $\lambda_1,\lambda_2 \in \R$}.
\end{equation}

So far we showed that, independently of the level $c$, the Palais-Smale sequence $\{(\bar u_n,\bar v_n)\}$ tends weakly to a solution of \eqref{system+} (notice that the mass constraint is not present), one of the Lagrange multipliers $\lambda_i$ is negative, and the corresponding component is strongly convergent. Without loss of generality, we can suppose that $\lambda_1<0$, so that $u_n \to \bar u$ strongly. If $\lambda_2<0$, then also $v_n \to \bar v$ strongly in $H^1(\R^3)$ (see \cite[Lemma 3.9]{BaJeSo} again). In what follows we prove that if $c \neq \ell$, then it is necessary that $\lambda_2<0$; while if $c = \ell$, then it is possible that $\lambda_2 \ge 0$, but in such a situation $v_n \to 0$ strongly in $\mathcal{D}^{1,2}(\R^3)$, and $u_n \to w_0$ strongly in $H^1(\R^3)$.

Suppose then that $\lambda_2 \ge 0$. Since $\lambda_1<0$, the function $\bar u$ decays exponentially at infinity, see \cite[Lemma 3.11]{BaSo}. As a consequence, if $\lambda_2 \ge 0$, then
\[
-\Delta \bar v + c(x) \bar v \ge 0 \quad \text{in $\R^3$}, \quad \text{with} \quad 0 \le c(x):= - \beta \bar{u}^2(x) \le C e^{-C |x|}.
\]
Since moreover $\bar v \ge 0$ in $\R^3$ and $\bar v \in H^1(\R^3)$, by the Liouville-type Lemma 3.12 in \cite{BaSo} we infer that $\bar v \equiv 0$ in $\R^3$. But then $\bar u$ is positive and solves \eqref{syst scalar}, and by uniqueness $\bar u= w_0$. It is well known that any radial solution to \eqref{syst scalar} stays in
\[
\mathcal{M}:= \left\{ u \in S_a^r: \int_{\R^3} |\nabla u|^2 = \frac34 \int_{\R^3} u^4\right\},
\]
and hence
\[
\ell = I(w_0) = \frac{1}{8} \int_{\R^3} w_0^4.
\]
Consequently, using that $(u_n,v_n) \to (w_0,0)$ in $L^4$, and recalling \eqref{J on P}, we have
\[
\begin{split}
c &= \lim_{n \to \infty} J_\be(u_n,v_n)
   = \lim_{n\to\infty} \frac{1}{8}   \int_{\R^3} u_n^4 + \beta u_n^2 v_n^2 + v_n^4  = \frac{1}{8} \int_{\R^3} w_0^4 = \ell.
\end{split}
\]
Therefore, in the case $c \neq \ell$ we must have $\lambda_2<0$, and hence $(u_n,v_n) \to (\bar u, \bar v)$ strongly in $H^1$. If on the other hand $c = \ell$, we proved that in case $\lambda_2 \ge 0$ (that is, if we do not have strong convergence of the whole Palais-Smale sequence) we have $u_n \to w_0$ strongly and $v_n \wc 0$ weakly in $H^1$. To check that indeed $v_n \to 0$ strongly in $\mathcal{D}^{1,2}$, it is sufficient to recall that $(u_n,v_n) \in \cP_\be$ for every $n$, so that
\[
\int_{\R^3} |\nabla v_n|^2 = \frac34\int_{\R^3} \left( u_n^4 + 2\beta u_n^2 v_n^2+ v_n^4\right) - \int_{\R^3} |\nabla u_n|^2 \to \frac34 \int_{\R^3} w_0^4 - \int_{\R^3} |\nabla w_0|^2
\]
as $n \to \infty$. Since $w_0 \in \mathcal{M}$, the last term is equal to $0$, that is $\|v_n\|_{\mathcal{D}^{1,2}} \to 0$, as desired.
\end{proof}

\section{Dependence of the energy level $\inf_{\cP_\be} J_\be$ with respect to $\beta$}\label{sec: energy}

In the first part of this section we analyze the behavior of the energy level $\inf_{\mathcal{P}_\beta} J_\beta$ when $\beta$ varies. We stress that $\beta=0$ is included in our analysis. We set
\[
  m_{\beta}:= \inf_{\mathcal{P}_\beta} J_\beta,
\]
and recall the explicit expression of the functional $E_\beta(u,v) = J_\beta(s^\beta_{(u,v)} \star (u,v))$, see \eqref{expE}.

\begin{lemma}\label{inf ray}
We have
\[
m_\beta= \inf_{(u,v) \in \cE_\be} E_\beta(u,v).
\]
\end{lemma}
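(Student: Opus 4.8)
The plan is to prove the equality $m_\beta = \inf_{(u,v)\in\cE_\be} E_\beta(u,v)$ by showing the two inequalities separately, using Lemma~\ref{lem: star} as the central tool. First I would show $m_\beta \ge \inf_{\cE_\be} E_\beta$. Take any $(u,v)\in\cP_\be$. By Lemma~\ref{lem: star}(i), since $\cP_\be\subset\cE_\be$ and $s^\beta_{(u,v)}=0$ for points of $\cP_\be$, we have $E_\beta(u,v) = J_\beta(0\star(u,v)) = J_\beta(u,v)$. Hence $J_\beta(u,v) = E_\beta(u,v) \ge \inf_{\cE_\be} E_\beta$, and taking the infimum over $(u,v)\in\cP_\be$ gives $m_\beta \ge \inf_{\cE_\be} E_\beta$.

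For the reverse inequality $m_\beta \le \inf_{\cE_\be} E_\beta$, take any $(u,v)\in\cE_\be$. By Lemma~\ref{lem: star}(i), the point $s^\beta_{(u,v)}\star(u,v)$ lies in $\cP_\be$, so $J_\beta\big(s^\beta_{(u,v)}\star(u,v)\big) \ge m_\beta$. But by the definition of $E_\beta$ we have precisely $E_\beta(u,v) = J_\beta\big(s^\beta_{(u,v)}\star(u,v)\big) \ge m_\beta$. Taking the infimum over $(u,v)\in\cE_\be$ yields $\inf_{\cE_\be} E_\beta \ge m_\beta$, which combined with the first inequality completes the proof.

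I do not anticipate a genuine obstacle here: the statement is essentially a reformulation of the fact (recorded in Lemma~\ref{lem: star}) that every point of $\cE_\be$ can be pushed onto $\cP_\be$ by the dilation action $s\star\cdot$, along which the energy is the function $\Psi^\beta_{(u,v)}$ whose unique critical point is the maximum attained exactly on $\cP_\be$. The only small point to keep in mind is that $\cP_\be\subset\cE_\be$ (so the first infimum is over a subset on which $E_\beta$ and $J_\beta$ agree), which is exactly the inclusion used in the proof of Lemma~\ref{minP}; everything else is the elementary two-sided comparison above.
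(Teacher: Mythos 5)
Your proof is correct and is essentially identical to the paper's own argument: both inequalities are obtained exactly as in the paper, using that $s^\beta_{(u,v)}=0$ on $\cP_\be$ (so $E_\beta=J_\beta$ there) and that $s^\beta_{(u,v)}\star(u,v)\in\cP_\be$ for every $(u,v)\in\cE_\be$. Nothing is missing.
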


\begin{proof}
For every $(u,v) \in \mathcal{P}_\beta$, the value $s^\beta_{(u,v)}$ defined by \eqref{def s_u} is equal to $0$, and hence by definition of $E_\beta$
\begin{align*}
J_\beta(u,v) &= E_\beta(u,v) \ge \inf_{\cE_\be} E_\beta \quad \implies \quad m_\beta \ge \inf_{\cE_\be} E_\beta.
\end{align*}
For the reverse inequality, we note that for any $(u,v) \in \cE_\be$
\[
  E_\beta(u,v) =  J_\beta(s^\beta_{(u,v)} \star (u,v)) \ge m_\beta \quad \implies \quad \inf_{\cE_\be} E_\beta \ge m_\beta. \qedhere
\]
\end{proof}

\begin{lemma}\label{lem: monot}
The level $m_\beta$ is monotone non-increasing in $\beta$. In particular, $m_\beta \ge m_0>0$ for every $\beta \le 0$.
\end{lemma}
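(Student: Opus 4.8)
The plan is to bypass the $\beta$-dependent constraint $\mathcal{P}_\beta$ entirely and work instead with the reformulation $m_\beta = \inf_{(u,v) \in \cE_\be} E_\beta(u,v)$ provided by Lemma~\ref{inf ray}. The advantage is that the explicit expression for $E_\beta$ in \eqref{expE} isolates the $\beta$-dependence into a single scalar quantity: in the present normalization $\mu=1$,
\[
  E_\beta(u,v) = \frac{8\left(\int_{\R^3}|\nabla u|^2+|\nabla v|^2\right)^3}{27\, D_\beta(u,v)^2},
  \qquad D_\beta(u,v) := \int_{\R^3} u^4 + 2\beta u^2 v^2 + v^4,
\]
so the map $\beta \mapsto E_\beta(u,v)$ is completely transparent once one understands how $D_\beta$ varies with $\beta$.

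Fix $\beta_1 \le \beta_2$. Since $\int_{\R^3} u^2 v^2 \ge 0$ for every $(u,v)$, one immediately gets the monotonicity $D_{\beta_1}(u,v) \le D_{\beta_2}(u,v)$ pointwise on $\cS$. This single inequality yields both ingredients I need. First, it gives the nesting of domains $\cE_{\beta_1} \subseteq \cE_{\beta_2}$: if the smaller denominator $D_{\beta_1}(u,v)$ is positive, then so is $D_{\beta_2}(u,v) \ge D_{\beta_1}(u,v)$, hence $(u,v) \in \cE_{\beta_1}$ forces $(u,v) \in \cE_{\beta_2}$. Second, on the smaller set $\cE_{\beta_1}$ both denominators are strictly positive, and the numerator $\int_{\R^3}|\nabla u|^2 + |\nabla v|^2$ is strictly positive (as any $(u,v)\in\cS$ is non-constant with positive $L^2$-mass in each component), so $0 < D_{\beta_1}(u,v) \le D_{\beta_2}(u,v)$ gives $E_{\beta_1}(u,v) \ge E_{\beta_2}(u,v)$ for every $(u,v) \in \cE_{\beta_1}$.

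Chaining these facts produces the desired inequality:
\[
  m_{\beta_1} = \inf_{\cE_{\beta_1}} E_{\beta_1} \ge \inf_{\cE_{\beta_1}} E_{\beta_2} \ge \inf_{\cE_{\beta_2}} E_{\beta_2} = m_{\beta_2},
\]
where the first step uses the pointwise bound $E_{\beta_1} \ge E_{\beta_2}$ on $\cE_{\beta_1}$ and the second uses $\cE_{\beta_1} \subseteq \cE_{\beta_2}$ (enlarging the set over which one takes the infimum can only decrease it). This establishes that $m_\beta$ is non-increasing. For the final assertion, monotonicity applied to $\beta \le 0$ gives $m_\beta \ge m_0$, and the strict positivity $m_0>0$ follows from Lemma~\ref{minP} with $\beta=0$: for any $(u,v) \in \cP_0$ we have $J_0(u,v) = \tfrac16\|(u,v)\|_{\cD^{1,2}}^2 \ge \tfrac16\delta^2 > 0$, whence $m_0 \ge \tfrac16\delta^2 > 0$. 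There is no serious obstacle here; the only point requiring care is getting the direction of the inequalities right, which hinges on the observation that increasing $\beta$ \emph{enlarges} the denominator $D_\beta$ (because of the nonnegative coupling term) and therefore \emph{lowers} the value of $E_\beta$.
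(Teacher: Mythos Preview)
Your proof is correct and uses essentially the same ingredients as the paper's: the reformulation $m_\beta=\inf_{\cE_\beta}E_\beta$ from Lemma~\ref{inf ray}, the pointwise monotonicity of $D_\beta$ in $\beta$, the resulting nesting $\cE_{\beta_1}\subset\cE_{\beta_2}$, and Lemma~\ref{minP} for the strict positivity of $m_0$. The only cosmetic difference is that the paper packages the same chain of inequalities as a contradiction argument, whereas you argue directly; your version is in fact slightly cleaner.
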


\begin{proof}
Suppose that $\beta_1<\beta_2 \le 0$ but $m_{\beta_2} > m_{\beta_1}$. Notice that
\begin{equation}\label{13gen1}
  \int_{\R^3} u^4 + v^4 + 2\beta_1 u^2 v^2 \le \int_{\R^3} u^4 + v^4 + 2\beta_2 u^2 v^2,
\end{equation}
and hence $\cE_{\beta_1} \subset \cE_{\beta_2}$. Since $m_{\beta_2} > m_{\beta_1}$, there exists $(u,v) \in \cE_{\beta_1}$ such that $m_{\beta_2} > E_{\beta_1}(u,v) \ge m_{\beta_1}$. But then $(u,v) \in \cE_{\beta_2}$, and using again \eqref{13gen1} we deduce that
\[
  m_{\beta_2} > E_{\beta_1}(u,v) \ge E_{\beta_2}(u,v) \ge \inf_{\cE_{\beta_2}} E_{\beta_2} = m_{\beta_2},
\]
a contradiction.

The inequality $m_0>0$ is the content of Lemma \ref{minP} for $\beta=0$.
\end{proof}

We can now show that, in fact, $m_\beta$ takes always the same value, independently of $\beta \le 0$.

\begin{lemma}\label{lem: inf P}
For every $\beta \le 0$ it results that $m_\beta =\ell$.
\end{lemma}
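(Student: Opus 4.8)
The plan is to prove the two inequalities $m_\beta\le\ell$ and $m_\beta\ge\ell$ separately, working throughout with the scaling‑reduced functional. By Lemma~\ref{inf ray} we have $m_\beta=\inf_{\cE_\beta}E_\beta$, and the relevant formula is the last expression for $E_\beta$ in \eqref{expE}.

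For $m_\beta\le\ell$ I would exhibit a one‑parameter test family degenerating to the semi‑trivial level $\ell$. Fix any bounded radial $v\in S_a$ and consider $(w_0,\,s\star v)$ as $s\to-\infty$. Since $\int_{\R^3}|\nabla(s\star v)|^2=e^{2s}\int_{\R^3}|\nabla v|^2\to0$, $\int_{\R^3}(s\star v)^4=e^{3s}\int_{\R^3}v^4\to0$, and $0\le\int_{\R^3}w_0^2(s\star v)^2\le e^{3s}\|v\|_\infty^2a^2\to0$, the pair $(w_0,s\star v)$ belongs to $\cE_\beta$ for $s$ sufficiently negative; inserting it into \eqref{expE} and using $w_0\in\mathcal{M}$, i.e.\ $\int_{\R^3}|\nabla w_0|^2=\tfrac34\int_{\R^3}w_0^4$, I get
\[
E_\beta(w_0,s\star v)\ \longrightarrow\ \frac{8\bigl(\tfrac34\int_{\R^3}w_0^4\bigr)^3}{27\bigl(\int_{\R^3}w_0^4\bigr)^2}=\frac18\int_{\R^3}w_0^4=\ell
\]
as $s\to-\infty$, hence $m_\beta\le\ell$.

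For $m_\beta\ge\ell$ I would first reduce to $\beta=0$: if $(u,v)\in\cE_\beta$ with $\beta\le0$, then $0<\int_{\R^3}(u^4+2\beta u^2v^2+v^4)\le\int_{\R^3}(u^4+v^4)$, so $(u,v)\in\cE_0$ and $E_\beta(u,v)\ge E_0(u,v)$ by \eqref{expE}; thus $m_\beta\ge m_0$ (which in passing reproves Lemma~\ref{lem: monot}). It then remains to show $E_0(u,v)\ge\ell$ for every $(u,v)\in\cS$. Here I would invoke the scalar ground‑state characterization (see \cite[Section~2]{BaJeSo}): for every $w\in S_a$ one has $\max_{s\in\R}I(s\star w)=\tfrac{8(\int_{\R^3}|\nabla w|^2)^3}{27(\int_{\R^3}w^4)^2}\ge\ell$, equivalently $\bigl(\int_{\R^3}w^4\bigr)^2\le\tfrac{8}{27\ell}\bigl(\int_{\R^3}|\nabla w|^2\bigr)^3$. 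Writing $A=\int_{\R^3}|\nabla u|^2$, $B=\int_{\R^3}|\nabla v|^2$, applying this to $u$ and to $v$, and using the elementary superadditivity $A^{3/2}+B^{3/2}\le(A+B)^{3/2}$, I obtain
\[
\int_{\R^3}(u^4+v^4)\ \le\ \Bigl(\tfrac{8}{27\ell}\Bigr)^{1/2}\bigl(A^{3/2}+B^{3/2}\bigr)\ \le\ \Bigl(\tfrac{8}{27\ell}\Bigr)^{1/2}(A+B)^{3/2},
\]
so that, by \eqref{expE},
\[
E_0(u,v)=\frac{8(A+B)^3}{27\bigl(\int_{\R^3}u^4+v^4\bigr)^2}\ \ge\ \frac{8(A+B)^3}{27\cdot\tfrac{8}{27\ell}(A+B)^3}=\ell .
\]
Hence $m_0\ge\ell$, and combining with the upper bound gives $\ell\le m_0\le m_\beta\le\ell$, i.e.\ $m_\beta=\ell$.

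The argument is essentially soft once two ingredients are available: the explicit scaling‑reduced formula \eqref{expE}, and the scalar fact $\ell=\inf_{w\in S_a}\max_s I(s\star w)$, which is standard and already used in \cite{BaJeSo}. The one point requiring genuine care — the natural candidate for an obstacle — is making the upper‑bound construction rigorous, namely ensuring that $(w_0,s\star v)\in\cE_\beta$ along the whole family; this is exactly why $v$ is taken bounded, so that every $v$‑dependent contribution vanishes in the limit $s\to-\infty$. I do not expect any substantive difficulty beyond this.
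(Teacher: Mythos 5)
Your proof is correct, but it takes a genuinely different route from the paper in both inequalities. For the lower bound the paper argues by contradiction: assuming $m_0<\ell$ it extracts, via the partial Palais--Smale property of Proposition~\ref{prop: PS cond} at the level $m_0\neq\ell$, a strongly convergent minimizing sequence whose limit solves the uncoupled system at $\beta=0$, and then uniqueness of the scalar normalized ground state forces $m_0=2\ell$, a contradiction; you instead prove the pointwise inequality $E_0(u,v)\ge\ell$ on $\cS$ directly from the scalar characterization $\ell=\inf_{w\in S_a}\max_{s}I(s\star w)$ (equivalently the sharp Gagliardo--Nirenberg--type bound $(\int_{\R^3}w^4)^2\le\tfrac{8}{27\ell}(\int_{\R^3}|\nabla w|^2)^3$ on $S_a$) together with the superadditivity of $t\mapsto t^{3/2}$, and then pass to $\beta\le 0$ by the monotonicity in $\beta$. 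For the upper bound the paper simply quotes \cite[Lemma 3.10]{BaSo}, whereas you rebuild that sequence by hand with the degenerating pair $(w_0,s\star v)$, $v$ bounded, and evaluate $E_\beta$ via \eqref{expE} and $\int_{\R^3}|\nabla w_0|^2=\tfrac34\int_{\R^3}w_0^4$. Your argument is more self-contained and softer: it avoids the compactness machinery of Proposition~\ref{prop: PS cond} entirely and yields the stronger statement $E_0\ge\ell$ everywhere on $\cS$. Its only external ingredient is the variational characterization of $\ell$ as the infimum of $I$ over the scalar Pohozaev-type manifold $\mathcal{M}$ (equivalently the min-max over dilations), which is indeed available in \cite[Section 2]{BaJeSo} and is classical (sharp Gagliardo--Nirenberg), but note that the present paper only records $\ell=I(w_0)$ with $w_0\in\mathcal{M}$, so this minimality should be cited explicitly rather than treated as contained in the definition of $\ell$. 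The paper's route, by contrast, reuses machinery already established here (Proposition~\ref{prop: PS cond} and uniqueness of $w_0$) at the cost of an indirect contradiction argument and the external reference for the upper bound.
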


\begin{proof}
We show first that $m_\beta \ge \ell$, and to this purpose it is sufficient to show that $m_0 \ge \ell$, due to Lemma \ref{lem: monot}. Arguing by contradiction we suppose that $0 < m_0 < \ell$, and consider a minimizing sequence $\{(u_n,v_n)\}$ for $m_0$. It is not restrictive to assume that $u_n,v_n \ge 0$ a.e in $\R^3$, and hence by Proposition \ref{prop: PS cond} we have that up to a subsequence $(u_n,v_n) \to (u_0,v_0)$ strongly in $H^1$. Since $\beta = 0$, the system \eqref{system} is given by two uncoupled equations, and both $u_0$ and $v_0$ are positive radial solutions to \eqref{syst scalar}. By uniqueness, we deduce that $u_0 = v_0 = w_0$, and hence
\[
  \ell > m_0 = J_0(u_0,v_0) = I(u_0) + I(v_0) = 2 \ell \quad \text{with} \quad \ell >0,
\]
a contradiction.

Now we show that $m_\beta \le \ell$. According to Lemma 3.10 in \cite{BaSo}, there exists a sequence $\{(u_n,v_n)\} \subset \cP_\be$ such that $J_\be(u_n,v_n) \to \ell$, $u_n \to w_0$ strongly in $H^1$, and $v_n \to 0$ strongly in $\mathcal{D}^{1,2}$. The inequality $m_\be \le \ell$ follows immediately.
\end{proof}

Let us consider the involution $\sigma: H^1_{\rad}(\R^3,\R^2) \to H^1_{\rad}(\R^3,\R^2)$, $\sigma(u,v) = (v,u)$. Notice that both $J_\be$ and $\cP_\be$ are $\sigma$-invariant, by the symmetry of \eqref{system}.
Moreover $\si$ has no fixed points in $\cP_\be$ for $\beta \le -1$, because $(u,u) \in \cP_\be$ implies 
\[
  0 < \int_{\R^3} |\nabla u|^2 = \frac34 (1+\beta) \int_{\R^3} u^4.
\]

Next we consider the fixed point set $\cP_\be^\si := \{(u,v)\in\cP_\beta: u=v\}$ as $\be\to-1$, and the infimum $m_\be^\si := \inf_{\cP_\be^\si} J_\be$.
\begin{lemma}\label{lem:mbeta}
For $\be \downarrow -1$ there holds $m_\be^\si \to +\infty$.
\end{lemma}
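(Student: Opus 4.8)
The plan is to exploit the explicit formula~\eqref{expE} for $E_\beta$ restricted to the fixed-point set $\cP_\beta^\sigma$, where $u=v$. On this set the relevant quantity reduces to a scalar problem: if $(u,u)\in\cS$ then $\int_{\R^3}u^2=a^2$, and $s^\beta_{(u,u)}$ is finite precisely when $\int_{\R^3}(2+2\beta)u^4>0$, i.e.\ when $\beta>-1$. From the last line of~\eqref{expE} we get, for $(u,u)\in\cE_\beta$,
\[
  E_\beta(u,u)=\frac{8\left(2\int_{\R^3}|\nabla u|^2\right)^3}{27\left((2+2\beta)\int_{\R^3}u^4\right)^2}
   =\frac{1}{(1+\beta)^2}\cdot\frac{8\left(\int_{\R^3}|\nabla u|^2\right)^3}{27\left(\int_{\R^3}u^4\right)^2}.
\]
By Lemma~\ref{inf ray} applied in the $\sigma$-symmetric setting (or simply by the identification $m_\beta^\sigma=\inf\{E_\beta(u,u):(u,u)\in\cE_\beta\}$, which follows from Lemma~\ref{lem: star} exactly as in the proof of Lemma~\ref{inf ray}), we obtain
\[
  m_\beta^\sigma=\frac{1}{(1+\beta)^2}\inf_{u\in S_a^r,\ \int u^4>0}\frac{8\left(\int_{\R^3}|\nabla u|^2\right)^3}{27\left(\int_{\R^3}u^4\right)^2}.
\]

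The key step is then to observe that the infimum on the right-hand side is a \emph{fixed positive constant}, independent of $\beta$. Indeed, by the Gagliardo--Nirenberg inequality
\[
  \int_{\R^3}u^4\le C\left(\int_{\R^3}u^2\right)^{1/2}\left(\int_{\R^3}|\nabla u|^2\right)^{3/2}=C\,a\left(\int_{\R^3}|\nabla u|^2\right)^{3/2},
\]
so the quotient $\left(\int|\nabla u|^2\right)^3/\left(\int u^4\right)^2$ is bounded below by $1/(C^2a^2)>0$ over all admissible $u$. (In fact this scalar infimum is exactly the ground state level-type constant for the single equation, and equals $\ell$ after normalization — compare with the definition of $\cM$ and the identity $\ell=\frac18\int w_0^4$ in Lemma~\ref{lem: inf P} — but for the present statement only positivity of the constant is needed.) Denote this constant by $\kappa>0$.

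Combining the two displays, $m_\beta^\sigma=\kappa/(1+\beta)^2$. Since $\kappa>0$ is independent of $\beta$ and $1+\beta\downarrow 0^+$ as $\beta\downarrow-1$, we conclude $m_\beta^\sigma=\kappa/(1+\beta)^2\to+\infty$, which is the assertion. I do not expect a genuine obstacle here; the only point requiring a little care is justifying the scaling identity $m_\beta^\sigma=\inf_{u}E_\beta(u,u)$ on the symmetric set — that is, checking that for $(u,u)\in\cS$ with $\int u^4>0$ the unique maximizing scaling $s^\beta_{(u,u)}\star(u,u)$ again lies in $\cP_\beta$ and still has both components equal, which is immediate since $s\star$ acts diagonally and commutes with $\sigma$. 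One should also note that $\cP_\beta^\sigma\neq\emptyset$ for $\beta>-1$ (take any $u\in S_a^r$ with $\int u^4>0$ and scale), so that $m_\beta^\sigma$ is a well-defined finite number for each such $\beta$, and the limit statement is meaningful.
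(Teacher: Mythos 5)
Your proof is correct and follows essentially the same route as the paper's: both reduce the claim to the Gagliardo--Nirenberg inequality on the diagonal, giving $m_\beta^\sigma \ge \mathrm{const}/(1+\beta)^2$ (the paper estimates $J_\beta$ directly on $\cP_\beta^\sigma$ via the constraint identity $\int_{\R^3}|\nabla u|^2=\frac34(1+\beta)\int_{\R^3}u^4$, while you factor through $E_\beta$ and get the exact scaling $m_\beta^\sigma=\kappa/(1+\beta)^2$). The only slip is harmless: restricting \eqref{expE} to the diagonal yields the constant $\frac{16}{27}$, not $\frac{8}{27}$, in the numerator, which affects neither the positivity of $\kappa$ nor the conclusion.
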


\begin{proof}
This is a simple consequence of the Gagliardo -Nirenberg inequality: if $(u,u) \in \mathcal{P}_\beta^\sigma$, then
\[
\int_{\R^3} |\nabla u|^2 = \frac34 (1+\beta) \int_{\R^3} u^4 \le \frac{3 C a}{4} (1+\beta)\left( \int_{\R^3} |\nabla u|^2\right)^\frac32,
\]
whence it follows that
\[
 J_\beta(u,u)   =  \frac13\int_{\R^3} |\nabla u|^2  \ge \frac{16}{27C^2a^2(1+\beta)^2}. \qedhere
\]
\end{proof}

\section{The minimax scheme}\label{sec: many PS}

In this section we set up a minimax scheme using the Krasnoselskii genus-type argument. For any closed $\sigma$-invariant set $A \subset \cE_\be$, we define the \emph{genus} $\gamma(A)$ as the smallest integer $n \in \N \cup \{0\}$ such that there exists a continuous map $h:A \to \R^n \setminus \{0\}$ with $h(\sigma(u,v)) = - h(u,v)$ for every $(u,v) \in A$. If no such map exists we set $\gamma(A) = +\infty$. We also note that $\gamma(\emptyset) = 0$. Below, we report some standard properties of $\gamma$, stated and proved for instance in \cite[Lemma 4.4]{DaWeWe}.

\begin{lemma}\label{lem: prop gamma}
Let $A,B \subset \cE_\be$ be closed and $\sigma$-invariant. We have:
\begin{itemize}
\item[($i$)] if $A \subset B$, then $\gamma(A) \le \gamma(B)$.
\item[($ii$)] $\gamma(A \cup B) \le \gamma(A) + \gamma(B)$.
\item[($iii$)] If $h:A \to \cE_\be$ is continuous and $\sigma$-equivariant, i.e.\ $h(\sigma(u,v)) = \sigma(h(u,v))$ for every $(u,v) \in A$, then $\gamma(A) \le \gamma(\overline{h(A)})$.
\end{itemize}
For a subset $A\subset \cE_\be$ that does not contain fixed points of $\sigma$, there holds:
\begin{itemize}
\item[($iv$)] if $\gamma(A)>1$, then $A$ is an infinite set.
\item[($v$)] If $A$ is compact, then $\gamma(A) <+\infty$, and there exists a relatively open $\sigma$-invariant neighborhood $N$ of $A$ in $\cE_\be$ such that $\gamma(A) = \gamma(\overline{N})$.
\end{itemize}
Finally,
\begin{itemize}
\item[($vi$)] if $S$ is the boundary of a bounded symmetric neighborhood of zero in a $k$-dimensional normed vector space and $\psi:S \to \cE_\be$ is a continuous map satisfying $\psi(-s) = \sigma(\psi(s))$, then $\gamma(\psi(S)) \ge k$.
\end{itemize}
\end{lemma}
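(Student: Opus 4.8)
The plan is to establish all six properties by the textbook arguments for the Krasnoselskii genus attached to a \emph{free} isometric involution, the single reusable ingredient being an equivariant Tietze extension. Concretely: $\sigma$ is an isometry of $H^1_{\rad}(\R^3,\R^2)$ with $\sigma^2=\mathrm{id}$, so $\cE_\be$ is a metric $\sigma$-space; given a closed $\sigma$-invariant $C\subset\cE_\be$ and a continuous \emph{odd} map $f\colon C\to\R^n$ (odd meaning $f(\sigma x)=-f(x)$), one takes any Tietze extension $\tilde f\colon\cE_\be\to\R^n$ and replaces it by $\hat f(x):=\frac12\bigl(\tilde f(x)-\tilde f(\sigma x)\bigr)$, a continuous odd extension of $f$. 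With this at hand, property (i) is immediate (restrict an odd nonvanishing map to the smaller set), and (iii) follows by noting that $\overline{h(A)}$ is closed and $\sigma$-invariant (since $h$ is equivariant and $\sigma$ a homeomorphism with $\sigma A=A$) and that $g\circ h\colon A\to\R^n\setminus\{0\}$ is odd and nonvanishing whenever $g\colon\overline{h(A)}\to\R^n\setminus\{0\}$ is.

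For (ii) I would assume $p:=\gamma(A)$ and $q:=\gamma(B)$ finite, pick odd nonvanishing $f\colon A\to\R^p$ and $g\colon B\to\R^q$, extend them to odd maps $\hat f,\hat g$ on $\cE_\be$ as above, and check that the stacked map $(\hat f,\hat g)\colon A\cup B\to\R^{p+q}$ is odd and never zero (it is nonzero on $A$ because $\hat f=f$ there, and on $B$ because $\hat g=g$ there), so $\gamma(A\cup B)\le p+q$. For (iv) I would note that a nonempty finite $\sigma$-invariant set without fixed points is a finite disjoint union of two-point orbits $\{a_i,\sigma a_i\}$ and is discrete; the assignment $a_i\mapsto 1$, $\sigma a_i\mapsto -1$ is then a continuous odd map into $\R\setminus\{0\}$, so such a set has genus $\le 1$, and the contrapositive gives the claim.

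The two halves of (v) are the most hands-on. For finiteness of $\gamma(A)$: for each $x\in A$ freeness gives a relatively open $U_x\ni x$ in $A$ with $U_x\cap\sigma(U_x)=\emptyset$; extract a finite subcover $U_{x_1},\dots,U_{x_k}$, take a partition of unity $\{\rho_i\}$ on $A$ subordinate to it with $\supp\rho_i\subset U_{x_i}$, and verify that $x\mapsto\bigl(\rho_1(x)-\rho_1(\sigma x),\dots,\rho_k(x)-\rho_k(\sigma x)\bigr)$ is odd and nonvanishing (if $\rho_j(x)>0$ then $x\in U_{x_j}$, hence $\sigma x\notin U_{x_j}\supset\supp\rho_j$, so the $j$-th coordinate equals $\rho_j(x)>0$), giving $\gamma(A)\le k<\infty$. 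For the neighbourhood, with $n:=\gamma(A)$ and an odd extension $\hat f\colon\cE_\be\to\R^n$ of a nonvanishing odd map on $A$, the set $N_0:=\hat f^{-1}(\R^n\setminus\{0\})$ is open, $\sigma$-invariant and contains $A$; since $A$ is compact, $\dist(\cdot,\cE_\be\setminus N_0)$ has a positive minimum $2\delta$ on $A$, and because $\sigma$ is an isometry with $\sigma A=A$, the set $N:=\{x\in\cE_\be:\dist(x,A)<\delta\}$ is a relatively open $\sigma$-invariant neighbourhood of $A$ with $\overline N\subset N_0$; then $\hat f\ne 0$ on $\overline N$ gives $\gamma(\overline N)\le n$, while $A\subset\overline N$ and (i) give the reverse inequality. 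For (vi): $\psi(S)$ is compact and $\sigma$-invariant (as $-S=S$ and $\psi(-s)=\sigma\psi(s)$); if $\psi$ hits a $\sigma$-fixed point then $\gamma(\psi(S))=+\infty$ and there is nothing to prove, so otherwise assume $\gamma(\psi(S))=n<k$, pick an odd nonvanishing $g$ on $\psi(S)$, and observe that $g\circ\psi\colon S\to\R^n\setminus\{0\}$ is odd, contradicting Borsuk's antipodal theorem, which forbids an odd continuous map from the boundary of a bounded symmetric neighbourhood of $0$ in $\R^k$ into $\R^{k-1}\setminus\{0\}$.

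I expect the obstacle here to be bookkeeping rather than conceptual: ensuring that \emph{every} extension and cutoff can be taken odd (the symmetrisation step) and that the neighbourhood in (v) is genuinely $\sigma$-invariant, both of which rest on $\sigma$ being an isometry; plus the clean reduction of (vi) to the classical Borsuk--Ulam theorem and the remark, used throughout, that any $\sigma$-invariant set meeting the fixed-point locus automatically has infinite genus, so the presence of $\sigma$-fixed points in $\cE_\be$ never causes trouble. Since all of this is standard, one may alternatively simply invoke \cite[Lemma 4.4]{DaWeWe} and the references therein.
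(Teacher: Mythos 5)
Your proposal is correct. Note, however, that the paper does not prove this lemma at all: it records the properties as standard and refers to \cite[Lemma 4.4]{DaWeWe} (which treats exactly this genus relative to the involution $\sigma(u,v)=(v,u)$), which is precisely the fallback you mention in your last sentence. What you supply in addition is the self-contained textbook argument behind that citation, and it goes through as written: the single technical ingredient is the symmetrised Tietze extension $\hat f(x)=\tfrac12\bigl(\tilde f(x)-\tilde f(\sigma x)\bigr)$, which is legitimate because $A$ is closed in the metric space $\cE_\be$, $\cE_\be$ is $\sigma$-invariant in the symmetric setting $a_1=a_2$, $\mu_1=\mu_2$, and $\sigma$ restricts to the identity on the values of $f$ over $A$ by oddness; your uses of it in (ii) and (v), the two-point-orbit map in (iv), the partition-of-unity map and the $\delta$-neighbourhood $N=\{\dist(\cdot,A)<\delta\}$ in (v) (whose $\sigma$-invariance indeed rests on $\sigma$ being an $H^1$-isometry with $\sigma A=A$), and the reduction of (vi) to the generalised Borsuk antipodal theorem are all the standard steps and are carried out correctly, including the observation that sets meeting the fixed-point locus have infinite genus. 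The only bookkeeping convention worth making explicit is that in (v) the closure $\overline N$ is taken relative to $\cE_\be$ (which is open in $\cS$, not closed in $H^1$), so that $\overline N\subset N_0=\hat f^{-1}(\R^n\setminus\{0\})$ and the genus of $\overline N$ is defined; with that understanding your argument gives $\gamma(\overline N)\le\gamma(A)\le\gamma(\overline N)$ as required.
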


Let now $\mathcal{A}_\beta :=\{A \subset \cP_\be: \ \text{$A$ is closed and $\sigma$-invariant}\}$, and, for any $k \in \N$, let us define
\[
  \mathcal{A}_{k,\beta}:=\{ A \in \mathcal{A}_\beta: \ \text{$A$ is compact and $\gamma(A) \ge k$}\}.
\]
We define the minimax level
\[
  c_k = c_{k,\beta} := \inf_{A \in \mathcal{A}_{k,\beta}} \max_{(u,v) \in A} J_\beta(u,v).
\]

\begin{lemma}\label{lem: c_k finite}
Any $c_{k,\beta}$ is a real number, that is $\mathcal{A}_{k,\beta} \neq \emptyset$ for every $k$. Moreover, for every $k \ge 1$ there exists $C_k>0$ independent by $\beta<0$ such that $\ell \le c_{k,\beta} < C_k$.
\end{lemma}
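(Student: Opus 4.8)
The plan is to prove the two assertions of Lemma~\ref{lem: c_k finite} separately, establishing first that each $c_{k,\beta}$ is finite (by exhibiting an admissible set in $\mathcal{A}_{k,\beta}$), and then obtaining the uniform two-sided bound $\ell \le c_{k,\beta} < C_k$.

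First, for the lower bound $c_{k,\beta} \ge \ell$, I would note that every $A \in \mathcal{A}_{k,\beta}$ is in particular a nonempty subset of $\mathcal{P}_\beta$, so for any $(u,v) \in A$ we have $J_\beta(u,v) \ge \inf_{\mathcal{P}_\beta} J_\beta = m_\beta = \ell$ by Lemma~\ref{lem: inf P}. Taking the max over $A$ and then the infimum over $A \in \mathcal{A}_{k,\beta}$ gives $c_{k,\beta} \ge \ell$. This also shows $c_{k,\beta}$ is bounded below independently of $\beta \le 0$.

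For the nonemptiness of $\mathcal{A}_{k,\beta}$ and the uniform upper bound, the key is to construct, for each $k$, a fixed compact $\sigma$-invariant set of genus $\ge k$ sitting inside $\mathcal{E}_\beta$ (or rather its image under the $\star$-projection into $\mathcal{P}_\beta$), whose energy is controlled uniformly in $\beta < 0$. Here I would exploit the competing sign $\beta < 0$: since $\int (u^4 + 2\beta u^2v^2 + v^4) \le \int (u^4 + v^4)$, both the set $\mathcal{E}_\beta$ and the functional $E_\beta$ behave monotonically, and $E_\beta(u,v) \le E_0(u,v)$ pointwise on $\mathcal{E}_0 \subset \mathcal{E}_\beta$. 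So it suffices to build a single $k$-dimensional symmetric family in $\mathcal{E}_0$ with $E_0$ bounded by some $C_k$. Concretely, take $k$ functions $\phi_1,\dots,\phi_k \in H^1_{\rad}(\R^3)$ with pairwise disjoint supports (e.g. radial bumps on disjoint annuli), each with $\|\phi_i\|_{L^2} = a$, and parametrize the sphere $S^{k-1}$ by letting a point $\theta = (\theta_1,\dots,\theta_k)$ correspond to $(u_\theta, v_\theta)$ built from $\sum_i \theta_i \phi_i$ in a way that is odd under $\theta \mapsto -\theta$ and intertwines with $\sigma$ — the standard device is to use pairs $(\phi_i^+, \phi_i^-)$ or to take $u_\theta = \sum \theta_i^+ \phi_i + \sum \theta_i^- \psi_i$, $v_\theta = \sum \theta_i^- \phi_i + \sum \theta_i^+ \psi_i$ with $\{\phi_i\}\cup\{\psi_i\}$ disjointly supported, so that $\sigma(u_\theta,v_\theta) = (u_{-\theta}, v_{-\theta})$. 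One then checks $(u_\theta, v_\theta) \in \mathcal{E}_0$ for all $\theta$ (the quartic integral is strictly positive since the supports are disjoint, killing the cross terms), composes with the continuous map $(u,v) \mapsto s^0_{(u,v)} \star (u,v)$ into $\mathcal{P}_0 \subset \mathcal{P}_\beta$, obtaining a continuous $\sigma$-equivariant map $\psi: S^{k-1} \to \mathcal{P}_\beta$. By Lemma~\ref{lem: prop gamma}($vi$) its image has genus $\ge k$; it is compact and $\sigma$-invariant (after replacing by $\psi(S^{k-1})$ which is automatically $\sigma$-invariant by equivariance), so it lies in $\mathcal{A}_{k,\beta}$. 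Finally $\max_{\psi(S^{k-1})} J_\beta = \max_\theta E_\beta(u_\theta,v_\theta) \le \max_\theta E_0(u_\theta, v_\theta) =: C_k < \infty$, where the last max is finite by compactness of $S^{k-1}$ and continuity of $E_0$ on $\mathcal{E}_0$, and $C_k$ depends only on the fixed profiles $\phi_i, \psi_i$, hence not on $\beta$.

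The main obstacle I anticipate is arranging the $k$-dimensional symmetric family so that it simultaneously (a) lands in $\mathcal{E}_\beta$ for every $\theta$ in the parametrizing sphere — which forces care with the disjoint-support construction so the positive quartic term never degenerates, uniformly over the compact sphere — and (b) is genuinely odd/$\sigma$-equivariant in the sense needed to apply the Borsuk–Ulam-type estimate Lemma~\ref{lem: prop gamma}($vi$), while (c) keeping $E_0$ bounded independently of $\theta$. Once the profiles are chosen with disjoint supports, (a) and (c) follow from compactness and the explicit formula \eqref{expE}, and (b) is a matter of bookkeeping with the two families $\{\phi_i\}$, $\{\psi_i\}$; the composition with the $\star$-flow is continuous by Lemma~\ref{BaSo2.7} and \eqref{def s_u}. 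The monotonicity $E_\beta \le E_0$ on $\mathcal{E}_0$ for $\beta \le 0$ is what upgrades the bound to one uniform in $\beta$.
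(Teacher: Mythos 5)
Your proposal follows essentially the same route as the paper's proof: the lower bound comes from $c_{k,\beta}\ge c_{1,\beta}=\inf_{\mathcal{P}_\beta}J_\beta=\ell$ (Lemma \ref{lem: inf P}), and the upper bound comes from an equivariant $(k-1)$-sphere of pairs $(u_\theta,v_\theta)$ with $u_\theta v_\theta\equiv 0$, projected onto $\mathcal{P}_\beta$ by the $\star$-map and shown to have genus $\ge k$ via Lemma \ref{lem: prop gamma}($vi$); the paper generates such pairs more compactly as $\phi(w)=\bigl(a\,w^+/\|w^+\|_{L^2},\,a\,w^-/\|w^-\|_{L^2}\bigr)$ with $w$ on the unit sphere of a $k$-dimensional space of mean-zero radial functions, which is the same device. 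One correction, though: your stated mechanism for the $\beta$-uniformity is wrong as a general principle. For $\beta\le 0$ the inclusions and inequalities go the other way: $\mathcal{E}_\beta\subset\mathcal{E}_0$ (not $\mathcal{E}_0\subset\mathcal{E}_\beta$), $E_\beta\ge E_0$ on $\mathcal{E}_\beta$ (see the proof of Lemma \ref{lem: monot}), and $\mathcal{P}_0\not\subset\mathcal{P}_\beta$ in general, since a point of $\mathcal{P}_0$ lies in $\mathcal{P}_\beta$ only if $\int u^2v^2=0$. So ``$E_\beta\le E_0$ upgrades the bound to one uniform in $\beta$'' is not what saves you. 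What does save you — and what the paper uses — is precisely the disjointness of supports in your construction: since $u_\theta v_\theta\equiv 0$, the coupling term vanishes identically, hence $s^\beta_{(u_\theta,v_\theta)}$, the fact that the projected set lies in $\mathcal{P}_\beta$, and the value of $J_\beta$ on it are literally independent of $\beta$, and your $C_k=\max_\theta E_0(u_\theta,v_\theta)$ then works verbatim. With that replacement (and the bookkeeping detail that all $2k$ profiles must have $L^2$-norm $a$ so that $(u_\theta,v_\theta)\in\mathcal{S}$ for every $\theta\in S^{k-1}$), your argument is correct and coincides in substance with the paper's.
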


\begin{proof}
We choose a $k$-dimensional subspace $W$ of $\{w \in H^1_{\rad}(\R^3): \int_{\R^3} w = 0\}$, and set $T:=\{ w \in W: \|w\|_{H^1} = 1\}$. Let us consider the maps $\phi:T \to \mathcal{S}$  and $\psi: T \to \cP_\be$ defined by
\[
  \phi(w):= \left( \frac{a}{\|w^+\|_{L^2}} w^+, \frac{a}{\|w^-\|_{L^2}} w^-\right), \quad \text{and} \quad
  \psi(w):=  s^\beta_{\phi(w)} \star \phi(w),
\]
where $s^\beta_{\phi(w)}$ is given by \eqref{def s_u}. Notice that, since the components of $\phi(w)$ have disjoint positivity sets, $s^{\beta}_{\phi(w)}$ is independent of $\beta$; hence, $\psi(T) \subset \cP_\be$ for every $\beta$. Analogously, any function in $\psi(T)$ has components with disjoint positivity set, so that $J_\be(u,v)$ is independent of $\beta$ for every $(u,v) \in \psi(T)$.

Observe that $\psi$ is continuous by Lemma \ref{BaSo2.7} and that $\psi(-w) = \sigma(\psi(w))$. Now Lemma~\ref{lem: prop gamma} ($vi$) implies $\gamma(\psi(T)) \ge k$, $\psi(T) \subset \mathcal{P}_\beta$ is $\sigma$-invariant, and $\psi(T)$ is compact as the continuous image of a compact set. Consequently $\psi(T) \in \mathcal{A}_{k,\beta}$ for every $\beta$, and moreover
\[
  c_{k,\beta} \le \max_{\psi(T)} J_\beta =: C_k,
\]
as desired. The fact that $c_{k,\beta} \ge \ell $ for every $k \in \N$ and $\beta<0$ follows simply by the fact that
\[
\ell= \inf_{\mathcal{P}_\beta} J_\beta = c_{1,\beta} \le c_{k,\beta},
\]
by Lemma \ref{lem: inf P}.
\end{proof}

Now we define for $k\in\N$:
\[
  \be_k := \inf\{\be\in(-1,0): c_{k+1,\beta} \ge m_\be^\si\}.
\]
As a consequence of Lemmas~\ref{lem:mbeta} and \ref{lem: c_k finite} we have $\be_k\in (-1,0)$. Clearly $c_{k,\beta} < m_\be^\si$ provided $\be < \be_k$.

\begin{lemma}\label{ex PS seq}
For any $k\in\N$ and any $\be<\be_k$ there exists a Palais-Smale sequence $\{(u_n^k,v_n^k)\}$ for $J_\be$ on $\cS$ at level $c_{k,\be}$, satisfying the additional conditions $(u_n^k)^-,(v_n^k)^- \to 0$ a.e. in $\R^3$ as $n \to \infty$, and $\{(u_n^k,v_n^k)\} \subset \cP_\be$.
\end{lemma}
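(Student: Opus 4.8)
The plan is to realize $c_{k,\beta}$ as the minimax value of a $\sigma$-homotopy stable family and then invoke Theorem~\ref{thm:PSequi}. Set $\mathcal{F} := \mathcal{A}_{k,\beta}$ and take the closed boundary to be $B := \emptyset$, which is admissible. First I would check the three conditions of Definition~\ref{def hsf equi}: ($a$) is vacuous since $B = \emptyset$, and ($c$) holds because every member of $\mathcal{A}_{k,\beta}$ is $\sigma$-invariant by definition. For ($b$), given $A \in \mathcal{A}_{k,\beta}$ and a $\sigma$-equivariant homotopy $\eta \in C([0,1]\times\mathcal{P}_\beta,\mathcal{P}_\beta)$ with $\eta(0,\cdot) = \mathrm{id}$, writing $h := \eta(1,\cdot)$, the set $h(A) = \eta(\{1\}\times A)$ is compact (continuous image of a compact set), it is $\sigma$-invariant because $h$ is $\sigma$-equivariant and $A$ is $\sigma$-invariant, and $\gamma(h(A)) \ge \gamma(A) \ge k$ by Lemma~\ref{lem: prop gamma}($iii$) (here $h(A)$, being compact, is closed, so $\overline{h(A)} = h(A)$). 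Hence $h(A) \in \mathcal{A}_{k,\beta}$, so $\mathcal{A}_{k,\beta}$ is a $\sigma$-homotopy stable family of compact subsets of $\mathcal{P}_\beta$ with closed boundary $\emptyset$, and its minimax value is exactly $c_{k,\beta}$.

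Next I would verify the quantitative hypotheses of Theorem~\ref{thm:PSequi}: with $B = \emptyset$ assumption \eqref{hp minmax equi} holds trivially ($\mathcal{P}_\beta$ is nonempty), and, using $\sup J_\beta(\emptyset) = -\infty$, assumption \eqref{hp minmax2 equi} reduces to $0 < c_{k,\beta} < +\infty$, which is precisely Lemma~\ref{lem: c_k finite} ($\ell \le c_{k,\beta} < C_k$ with $\ell > 0$). Theorem~\ref{thm:PSequi} then delivers a sequence $\{(u_n^k,v_n^k)\} \subset \mathcal{P}_\beta$ with $J_\beta(u_n^k,v_n^k) \to c_{k,\beta}$ and $\|\nabla(J_\beta|_{\cS})(u_n^k,v_n^k)\| \to 0$, i.e.\ all the assertions of the lemma except the sign condition on $(u_n^k)^-, (v_n^k)^-$.

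To obtain that last condition I would use the ``moreover'' clause of Theorem~\ref{thm:PSequi}, which requires a minimizing sequence for $c_{k,\beta}$ all of whose sets consist of pairs with nonnegative components. Such a sequence is produced by the map $\Phi\colon\mathcal{P}_\beta\to\mathcal{P}_\beta$, $\Phi(u,v) := s^\beta_{(|u|,|v|)}\star(|u|,|v|)$. It is well defined since $(|u|,|v|)\in\cE_\beta$ whenever $(u,v)\in\mathcal{P}_\beta\subset\cE_\beta$, the quartic integral $\int_{\R^3}\mu_1 u^4 + 2\beta u^2 v^2 + \mu_2 v^4$ being insensitive to passing to absolute values; it is continuous by Lemma~\ref{BaSo2.7} and \eqref{def s_u}; it is $\sigma$-equivariant because \eqref{def s_u} is symmetric under exchanging the two components when $\mu_1 = \mu_2$; and $J_\beta(\Phi(u,v)) = E_\beta(|u|,|v|) = E_\beta(u,v) = J_\beta(u,v)$ for $(u,v)\in\mathcal{P}_\beta$, using $|\nabla|u|| = |\nabla u|$ a.e.\ and the last expression in \eqref{expE}. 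Hence, from any minimizing sequence $\{A_n\}\subset\mathcal{A}_{k,\beta}$ for $c_{k,\beta}$, the sets $\Phi(A_n)$ are compact, $\sigma$-invariant, satisfy $\gamma(\Phi(A_n)) \ge \gamma(A_n) \ge k$ by Lemma~\ref{lem: prop gamma}($iii$), have $\max_{\Phi(A_n)} J_\beta = \max_{A_n} J_\beta \to c_{k,\beta}$, and are made of pairs with nonnegative components (since $s\star w \ge 0$ when $w \ge 0$). Feeding $\{\Phi(A_n)\}$ into the additional conclusion of Theorem~\ref{thm:PSequi} yields $(u_n^k)^-,(v_n^k)^- \to 0$ a.e., completing the proof.

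The only delicate point I foresee is the use of genus monotonicity above: for $\beta\in(-1,0)$ the involution $\sigma$ has fixed points in $\mathcal{P}_\beta$, so a priori a set in $\mathcal{A}_{k,\beta}$ could contain one; but this is harmless, since such a set automatically has genus $+\infty \ge k$, and Lemma~\ref{lem: prop gamma}($iii$) holds for arbitrary closed $\sigma$-invariant subsets of $\cE_\beta$. Note finally that the restriction $\beta < \beta_k$ does not enter this particular argument; it guarantees $c_{k,\beta} < m_\beta^\sigma$, which will be needed only later, to rule out symmetric limits $u = v$.
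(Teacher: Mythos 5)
Your argument is correct and is essentially the paper's own proof: you verify that $\mathcal{A}_{k,\beta}$ is a $\sigma$-homotopy stable family with empty boundary, use Lemma~\ref{lem: c_k finite} for \eqref{hp minmax2 equi}, pass to a minimizing sequence of nonnegative pairs, and apply Theorem~\ref{thm:PSequi}. The only (harmless) redundancy is the rescaling in your map $\Phi$: since $(u,v)\in\cP_\beta$ forces $(|u|,|v|)\in\cP_\beta$ (the gradient and quartic terms are unchanged), one has $s^\beta_{(|u|,|v|)}=0$, so $\Phi(u,v)=(|u|,|v|)$ and your sequence coincides with the paper's sets $|A_n|$.
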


\begin{proof}
Using point ($iii$) in Lemma \ref{lem: prop gamma}, it is immediate to check that the family $\mathcal{A}_{k,\be}$ is a $\sigma$-homotopy stable family of compact subsets of $\cP_\be$ with boundary $\emptyset$, according to Definition \ref{def hsf equi}. Moreover, by Lemma \ref{lem: c_k finite} assumption \eqref{hp minmax2 equi} is satisfied.
%
%
Let then $\{A_n\} \subset \mathcal{A}_{k,\be}$ be a minimizing sequence for $c_{k,\be}$: $\max_{A_n} J_\be \to c_{k,\be}$ as $n\to\infty$. We note that then also $|A_n|$ is a minimizing sequence, where
\begin{equation}\label{mod set}
  |A_n|:= \{(|u|, |v|): \ (u,v) \in A_n\}, \qquad \text{for all }n.
\end{equation}
Indeed, $|A_n|$ inherits the equivariancy and the compactness from $A_n$, and by point ($iii$) in Lemma \ref{lem: prop gamma} we have $\gamma(|A_n|) \ge  \gamma(A_n) \ge k$ for every $k$.
As a consequence, the thesis follows directly from Theorem \ref{thm:PSequi}.
\end{proof}

Now we aim at showing the validity of a multiplicity result of Lusternik-Schnirelman type. We define the critical set
\[
  \mathcal{K}_c^+ := \left\{(u,v) \in \cS: \ u,v \ge 0 \text{ a.e. in $\R^3$}, \ J_\be(u,v)= c, \
                         d J_\be|_{\cS}(u,v) = 0\right\}.
\]
By the Pohozaev identity $\mathcal{K}_c^+ \subset \cP_\be$, and clearly $\mathcal{K}_c^+$ is $\sigma$-invariant.

\begin{lemma}\label{lem: multiplicity}
Fix $\be<\beta_{k+p}$ and suppose that $c= c_{j,\be} = c_{j+1,\be} = \dots= c_{j+p,\be}$ for some $j \ge 1$, $p \ge 0$. If $c \neq \ell$, then $\gamma(\mathcal{K}_c^+) > p$.
\end{lemma}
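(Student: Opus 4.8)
The plan is to run the standard Lusternik--Schnirelman deformation argument for the $\sigma$-equivariant genus. It is convenient to argue with $E_\be$ on $\cE_\be$ rather than with $J_\be$ on $\cP_\be$: recall that $E_\be=J_\be$ on $\cP_\be$, that $s\star(\cdot)$ leaves $E_\be$ invariant, and that $(u,v)\mapsto s^\be_{(u,v)}\star(u,v)$ maps compact $\sigma$-invariant subsets of $\cE_\be$ into $\cP_\be$ without changing the genus (it is continuous and $\sigma$-equivariant, Lemmas~\ref{lem: star}, \ref{BaSo2.7}, \ref{lem: prop gamma}(iii)) nor the attained value of the functional; hence $c_{j,\be}$ can equally be written as a min-max of $E_\be$ over the corresponding genus classes in $\cE_\be$. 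All deformations below are performed on $\cE_\be$ and transported back to $\cP_\be$ at the end.

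First I would record two facts about $\mathcal{K}_c^+$. (1) It is compact: a sequence in $\mathcal{K}_c^+$ is in particular a Palais--Smale sequence for $J_\be|_\cS$ at level $c$ contained in $\cP_\be$ with $u_n^-\equiv v_n^-\equiv 0$, so since $c\neq\ell$ Proposition~\ref{prop: PS cond}(a) provides a subsequence converging strongly in $H^1$ to a nonnegative solution of \eqref{system} at level $c$, i.e.\ to an element of $\mathcal{K}_c^+$. (2) $\mathcal{K}_c^+$ contains no fixed point of $\sigma$: the choice $\be<\beta_{k+p}$ gives (via the definition of $\beta_{k+p}$ and $c=c_{j+p,\be}\le c_{k+p,\be}$, the relevant range being $j\le k$) that $c<m_\be^\si$, so a point $(u,u)\in\mathcal{K}_c^+\subset\cP_\be$ would lie in $\cP_\be^\si$ with $J_\be(u,u)=c<m_\be^\si=\inf_{\cP_\be^\si}J_\be$, which is impossible. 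Now assume, for contradiction, that $\gamma(\mathcal{K}_c^+)\le p$. By Lemma~\ref{lem: prop gamma}(v) there is a relatively open $\sigma$-invariant neighborhood $N$ of $\mathcal{K}_c^+$ in $\cE_\be$ with $\gamma(\overline N)=\gamma(\mathcal{K}_c^+)\le p$; shrinking $N$ and using Lemma~\ref{lem:su bordo}, we may keep $\overline N$ bounded away from $\pa\cE_\be$ and free of fixed points of $\sigma$.

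The technical heart is an $\eps$-deformation for $E_\be$. Since $c\neq\ell$, I claim a localized compactness property \emph{along sign-respecting almost-critical sequences}: for every neighborhood $U$ of $\mathcal{K}_c^+$ there exist $\eps,\delta>0$ such that there is no $(u,v)\in\cE_\be\setminus U$ with $|E_\be(u,v)-c|\le\eps$, $\|dE_\be|_\cS(u,v)\|\le\delta$ and $\|u^-\|,\|v^-\|\le\delta$; otherwise a diagonal sequence, transported via the $E_\be$--$J_\be$ correspondence of Section~\ref{sec: nat} to a Palais--Smale sequence for $J_\be$ on $\cS$ lying in $\cP_\be$ with asymptotically vanishing negative parts, would by Proposition~\ref{prop: PS cond}(a) converge to a point of $\mathcal{K}_c^+$ outside $U$. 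From this one constructs, for small $\eps>0$, a continuous $\sigma$-equivariant map $\zeta:\cE_\be\to\cE_\be$ with $\zeta=\mathrm{id}$ outside $\{\,|E_\be-c|<2\eps\,\}$ and $\zeta(\{E_\be\le c+\eps\}\setminus N)\subset\{E_\be\le c-\eps\}$, the flow being built from a pseudo-gradient of $E_\be|_\cS$ cut off away from $N$ and near the bad levels, and \emph{arranged so that $\|u^-\|,\|v^-\|$ stay small}, which is inherited from starting inside the cone $\{u,v\ge 0\}$ and, for $\be\le 0$, from the sign structure of the nonlinearity. Producing $\zeta$ with these three properties \emph{at once} --- $\sigma$-equivariance, the expected downward deformation off $N$, and preservation of approximate positivity (so that Proposition~\ref{prop: PS cond} is applicable) --- is the step I expect to be the main obstacle, precisely because no genuine Palais--Smale condition holds on $\cP_\be$ and one cannot simply invoke an off-the-shelf deformation lemma.

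Granting $\zeta$, the conclusion is routine. As in Lemma~\ref{ex PS seq}, choose a minimizing sequence of \emph{nonnegative} competitors for $c_{j+p,\be}=c$ and a single such set $A$ (compact, $\sigma$-invariant, $\gamma(A)\ge j+p$, with $E_\be<c+\eps$ on $A$ and $u,v\ge 0$ on $A$). The set $A\setminus N$ is closed and $\sigma$-invariant, and from $A\subset(A\setminus N)\cup\overline N$ together with Lemma~\ref{lem: prop gamma}(i)--(ii) we get $\gamma(A\setminus N)\ge\gamma(A)-\gamma(\overline N)\ge(j+p)-p=j$. Then $\zeta(A\setminus N)$ is compact, $\sigma$-invariant, contained in $\{E_\be\le c-\eps\}$, and $\gamma(\zeta(A\setminus N))\ge\gamma(A\setminus N)\ge j$ by Lemma~\ref{lem: prop gamma}(iii); transporting it into $\cP_\be$ via $(u,v)\mapsto s^\be_{(u,v)}\star(u,v)$ yields a member of $\mathcal{A}_{j,\be}$ on which $J_\be\le c-\eps<c=c_{j,\be}$, contradicting the definition of $c_{j,\be}$. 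Hence $\gamma(\mathcal{K}_c^+)>p$.
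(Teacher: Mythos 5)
Your setup (compactness of $\mathcal{K}_c^+$ via Proposition \ref{prop: PS cond}, absence of fixed points of $\sigma$ from $c<m_\be^\si$, the neighborhood $N$ with $\gamma(\overline N)\le p$ from Lemma \ref{lem: prop gamma}(v), and the subadditivity step giving $\gamma(A\setminus N)\ge j$) matches the paper. But the core of your argument is the equivariant deformation $\zeta$ pushing $\{E_\be\le c+\eps\}\setminus N$ below level $c-\eps$ while keeping $u^-,v^-$ small, and this is exactly the step you leave unproven --- you even flag it as ``the main obstacle.'' That is a genuine gap, not a technicality: the ``localized compactness'' you invoke only controls almost-critical points with small negative parts, whereas a pseudo-gradient flow must be controlled along \emph{all} trajectories in the strip, and there is no reason the flow of $E_\be|_\cS$ (or $J_\be|_\cS$) preserves approximate nonnegativity. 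Indeed the introduction explains precisely why sign-preservation cannot be engineered here: the Lagrange multipliers $\la_i$ are not known to be negative, so one cannot truncate the nonlinearity and use the maximum principle as in \cite{DaWeWe}, and no Palais--Smale condition holds on $\cP_\be$ (semitrivial limits $(w_0,0)$, $(0,w_0)$ at level $\ell$ escape $\cS$). So as written, the proof does not go through.

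The paper avoids any deformation lemma altogether. It observes that, once $\gamma(\overline N)\le p$, the closed $\sigma$-invariant set $F:=\{E_\be\ge e_{j,\be}\}\setminus N$ is \emph{dual} to the class $\mathcal{B}_{j+p,\be}$ (it meets every $D\in\mathcal{B}_{j+p,\be}$, because $\gamma(D\setminus N)\ge j$ forces $\max_{D\setminus N}E_\be\ge e_{j,\be}$, using $e_{k,\be}=c_{k,\be}$ from Lemma \ref{lem: e_j}). Then Ghoussoub's equivariant minimax principle with dual sets (\cite[Theorem 7.2]{Ghou}), applied to a minimizing sequence $\{D_n\}\subset\cP_\be$ consisting of nonnegative pairs (obtained by replacing $D_n$ with $|D_n|$ and projecting via $s^\be_{(u,v)}\star(u,v)$), produces a Palais--Smale sequence for $E_\be$ at level $e_{j+p,\be}=c$ that is simultaneously $H^1$-close to $D_n$ (hence has vanishing negative parts and admits the comparison points $(w_n,z_n)\in\cP_\be$ needed for Proposition \ref{rem: on s_n}) and close to $F$, hence bounded away from $\mathcal{K}_c^+$. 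Transporting by $s_n\star(\cdot)$ and applying Proposition \ref{prop: PS cond}(a) (here $c\ne\ell$ is used) forces convergence to a point of $\mathcal{K}_c^+$, contradicting the distance to $F$. In short, the localization of the Palais--Smale sequence near a dual set replaces the deformation you were trying to build; if you want to salvage your route you would have to prove the positivity-compatible equivariant deformation lemma, which is precisely what the dual-set formulation is designed to circumvent.
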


For the proof, we introduce new minimax classes as follows: we consider $\mathcal{B}_\beta :=\{A \subset \cE_\be: \ \text{$A$ is closed and $\sigma$-invariant}\}$, and, for any $k \in \N$,
\[
  \mathcal{B}_{k,\beta}:=\{ A \in \mathcal{B}_\beta: \ \text{$A$ is compact and $\gamma(A) \ge k$}\}.
\]
The associated minimax levels are
\[
  e_k = e_{k,\beta} := \inf_{A \in \mathcal{B}_{k,\beta}} \max_{(u,v) \in A} E_\beta(u,v).
\]

\begin{lemma}\label{lem: e_j}
It results that $e_{k,\be} = c_{k,\be}$, for every $k \in \N$.
\end{lemma}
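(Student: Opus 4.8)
The plan is to establish the two inequalities $e_{k,\be}\le c_{k,\be}$ and $c_{k,\be}\le e_{k,\be}$ separately, using the retraction $(u,v)\mapsto s^\be_{(u,v)}\star(u,v)$ of $\cE_\be$ onto $\cP_\be$ (already exploited in the proof of Proposition \ref{prop: PS}) as the bridge between the two minimax families. The first inequality is immediate: since $\cP_\be\subset\cE_\be$, every $A\in\mathcal{A}_{k,\be}$ is a compact $\sigma$-invariant subset of $\cE_\be$ with $\gamma(A)\ge k$, hence $A\in\mathcal{B}_{k,\be}$; and because $s^\be_{(u,v)}=0$ on $\cP_\be$ (Lemma \ref{lem: star}) we have $E_\be=J_\be$ there, so $\max_A E_\be=\max_A J_\be$. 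Taking the infimum over $A\in\mathcal{A}_{k,\be}$ yields $e_{k,\be}\le c_{k,\be}$.

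For the reverse inequality I would fix $A\in\mathcal{B}_{k,\be}$ and consider the map $h\colon A\to\cP_\be$, $h(u,v):=s^\be_{(u,v)}\star(u,v)$. It is continuous: by \eqref{def s_u} the map $(u,v)\mapsto s^\be_{(u,v)}$ is a quotient of continuous functionals with nonvanishing denominator on $\cE_\be$, hence continuous, and then continuity of $h$ follows from Lemma \ref{BaSo2.7}. It is $\sigma$-equivariant: since $\mu_1=\mu_2$ and $a_1=a_2$, the right-hand side of \eqref{def s_u} is invariant under the swap $(u,v)\mapsto(v,u)$, so $s^\be_{\sigma(u,v)}=s^\be_{(u,v)}$, and as $\star$ commutes with $\sigma$ we get $h(\sigma(u,v))=s^\be_{(u,v)}\star\sigma(u,v)=\sigma(h(u,v))$. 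Consequently $h(A)\subset\cP_\be$ by Lemma \ref{lem: star}, $h(A)$ is compact (continuous image of a compact set) and $\sigma$-invariant ($\sigma(h(A))=h(\sigma(A))=h(A)$), and Lemma \ref{lem: prop gamma}($iii$) gives $\gamma(h(A))\ge\gamma(A)\ge k$ (here $\overline{h(A)}=h(A)$ since $h(A)$ is already closed); thus $h(A)\in\mathcal{A}_{k,\be}$. Finally, using $J_\be=E_\be$ on $\cP_\be$ together with the $\star$-invariance $E_\be(s\star(u,v))=E_\be(u,v)$, for each $(u,v)\in A$ we get $J_\be(h(u,v))=E_\be\bigl(s^\be_{(u,v)}\star(u,v)\bigr)=E_\be(u,v)$, so $\max_{h(A)}J_\be=\max_A E_\be$. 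Hence $c_{k,\be}\le\max_{h(A)}J_\be=\max_A E_\be$, and taking the infimum over $A\in\mathcal{B}_{k,\be}$ gives $c_{k,\be}\le e_{k,\be}$, completing the argument.

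I do not expect any genuine obstacle. The only points requiring a little care are the continuity and the $\sigma$-equivariance of $h$ — the latter being precisely where the symmetry hypotheses $\mu_1=\mu_2$, $a_1=a_2$ of this section are used — and the remark that $h(A)$, being compact, is closed, so that the genus monotonicity in Lemma \ref{lem: prop gamma}($iii$) applies directly. All the analytic ingredients (continuity of the $\star$-action via Lemma \ref{BaSo2.7}, the explicit formula \eqref{def s_u}, and the $\star$-invariance of $E_\be$) have already been recorded above, so the proof is essentially a bookkeeping exercise.
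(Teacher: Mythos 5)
Your argument is correct and coincides with the paper's own proof: both directions rest on the identity $E_\be=J_\be$ on $\cP_\be$ (giving $e_{k,\be}\le c_{k,\be}$ via $\cA_{k,\be}\subset\cB_{k,\be}$) and on pushing an arbitrary (respectively, $\eps$-optimal) set $D\in\cB_{k,\be}$ onto $\cP_\be$ by the continuous $\sigma$-equivariant map $h(u,v)=s^\be_{(u,v)}\star(u,v)$, using Lemma \ref{lem: prop gamma}($iii$) to preserve the genus and $J_\be(h(u,v))=E_\be(u,v)$ to compare the maxima. The only cosmetic difference is that you take the infimum over all of $\cB_{k,\be}$ directly while the paper phrases it with an $\eps$-approximate minimizer; the substance is identical.
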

\begin{proof}
Let $D \subset \cB_{k,\be}$ such that $\max_D E_\be < e_{k,\be} +\eps$, and let us consider the map
\[
h(u,v) = s_{(u,v)}^\be \star (u,v).
\]
By \eqref{def s_u} and Lemma \ref{BaSo2.7}, it is not difficult to check that $h$ is continuous and $\sigma$-equivariant. Then, by Lemma \ref{lem: prop gamma}-($iii$), the compact $\sigma$-invariant set $A:= h(D)$ satisfies $\gamma(A) \ge k$, and hence $A \in \cA_{k,\be}$. By definition, $E_\be(u,v) = J_\be(h(u,v))$ for any $(u,v) \in \cE_\be$, and in particular
\[
c_{k,\be} \le \max_A J_\be = \max_D E_\be \le e_{k,\be}+\eps.
\]
Since $\eps$ was arbitrarily chosen, we infer that $c_{k,\be} \le e_{k,\be}$. On the other hand, as $\cA_{k,\be} \subset \cB_{k,\be}$ and $E_\be=J_\be$ on $\cP_\be$, we have also that for any $A \in \cA_{k,\be}$
\[
\max_{A} J_\be = \max_{A} E_\be \ge e_{k,\be},
\]
whence the reverse inequality $e_{k,\be} \le c_{k,\be}$ follows.
\end{proof}

Now we proceed with the
\begin{proof}[Proof of Lemma \ref{lem: multiplicity}]
Suppose by contradiction that $\gamma(\mathcal{K}_c^+) \le p$. By Proposition \ref{prop: PS cond}, we have that $\mathcal{K}_c^+$ is compact. Then, by point ($v$) of Lemma \ref{lem: prop gamma}, there exists an open $\sigma$-invariant neighborhood $N$ of $\mathcal{K}_c^+$ in $\cE_\be$ such that $\gamma(\overline{N}) \le p$. Let $D \in \mathcal{B}_{j+p,\be}$ be arbitrarily chosen. Since $D \subset (D \setminus N) \cup \overline{N}$, by point ($ii$) of Lemma \ref{lem: prop gamma} we infer that $\gamma(D \setminus N) \ge j$, that is $D \setminus N \in \mathcal{B}_{j,\be}$. But then, by the definition of $e_j = c_j$, we have that $(D\setminus N) \cap E_{\be, e_{j}} \neq \emptyset$, where $E_{\be, e_{j}}$ is the superlevel set $\{E_\be \ge e_j\}$. For the closed $\sigma$-invariant set $F:=E_{\be, c_{j}} \setminus N$, we deduce that $F \cap D \neq \emptyset$ for every $D \in \mathcal{B}_{j+p,\be}$.

Let now $\{D_n\}$ be a minimizing sequence for $e_{j+p}$. Arguing as in the beginning of the proof of Proposition \ref{prop: PS}, we can suppose that $D_n \subset \cP_\be$ for every $n$. Moreover, arguing as in the proof of Lemma \ref{ex PS seq}, we can assume that any $(u,v) \in D_n$ is such that $u,v \ge 0$. Therefore, applying Theorem 7.2 in \cite{Ghou}, we deduce that there exists a Palais-Smale sequence $\{(\tilde u_n,\tilde v_n)\}$ for $E_\be$ on $\cS$ at level $e_{j+p}$ with the properties that
\begin{equation}\label{cond dist 1 11}
  \dist_{H^1(\R^3,\R^2)}((\tilde u_n,\tilde v_n), D_n) \to 0, \quad \text{and} \quad  \dist_{H^1(\R^3,\R^2)}((\tilde u_n,\tilde v_n), E_{\be, e_{j}} \setminus N) \to 0.
\end{equation}
Notice in particular that, since $D_n$ is compact, the first condition implies the existence of $(w_n,z_n) \in \mathcal{P}_\be$ with the properties ($a$) and ($b$) of Proposition \ref{rem: on s_n} (for the property ($b$), we can argue as in the proof of Proposition \ref{prop: PS}, using the fact that the level $e_j=c_j$ is finite), and satisfying also $w_n, z_n \ge 0$ a.e. in $\R^3$ for every $n$. Thus, Proposition \ref{rem: on s_n} ensures that $s_n=s_{(\tilde u_n,\tilde v_n)}$ tends to $0$ as $n \to \infty$, and that $(u_n,v_n) = s_n \star (\tilde u_n,\tilde v_n)$ is a Palais-Smale sequence for $J_\be$ at level $e_j$, with $(u_n,v_n) \in \cP_\be$ for every $n$, and $u_n^-,v_n^- \to 0$ a.e. in $\R^3$. 
Since $e_j = c_j \neq \ell$, Proposition \ref{prop: PS cond} implies that $(u_n,v_n) \to (u,v) \in \mathcal{K}^+_c$ strongly in $H^1$, up to a subsequence.

We are finally ready to reach a contradiction. On one side, by Lemma \ref{BaSo2.7}, we have $(\tilde u_n,\tilde v_n) = (-s_n) \star (u_n,v_n) \to (u,v) \in \mathcal{K}^+_c$ up to a subsequence, and in particular
\[
\dist_{H^1(\R^3,\R^2)}((\tilde u_n,\tilde v_n), \mathcal{K}_c^+) \to  0;
\]
but on the other side, by \eqref{cond dist 1 11}, there exists $C>0$ such that for every $n$ large
\[
\dist_{H^1(\R^3,\R^2)}((\tilde u_n,\tilde v_n), \mathcal{K}_c^+) \ge \inf_{(w,z) \in E_{\be, e_{j}} \setminus N}\dist_{H^1(\R^3,\R^2)}((w,z), \mathcal{K}_c^+) - o(1)  \ge C
\]
by definition of $N$, a contradiction.
\end{proof}

\section{Completion of the proof of the main results}\label{sec: end}

Combining Proposition \ref{prop: PS cond} and Lemmas \ref{ex PS seq} and \ref{lem: multiplicity}, the only fact that one has to check in order to obtain Theorem \ref{thm: main} is that
\[
\ell < c_{2,\be} \le \ldots \le c_{k+1,\be} < m_\be^\si \quad\text{for } \be < \be_k.
\]
Only the first inequality $\ell< c_{2,\be}$ is not obvious. This is a consequence of the following statement.

\begin{lemma}\label{lem: genus sub}
There exists $\delta>0$ such that the nonnegative closed sublevel set
\[
  J_{\mathcal{P}_\be^+}^{\ell+\delta}
   := \left\{(u,v) \in \cP_\be: \ u,v\ge 0 \ \text{a.e. in $\R^3$},\ J_\be(u,v) \le \ell +\delta\right\}
\]
has genus $1$.
\end{lemma}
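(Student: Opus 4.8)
The plan is to establish the two bounds $\gamma\bigl(J^{\ell+\delta}_{\mathcal{P}_\be^+}\bigr)\ge 1$ and $\gamma\bigl(J^{\ell+\delta}_{\mathcal{P}_\be^+}\bigr)\le 1$ for a suitably small $\delta>0$. The lower bound is immediate: $m_\be=\ell$ by Lemma~\ref{lem: inf P}, and since replacing $(u,v)$ by $(|u|,|v|)$ leaves every integral occurring in the definitions of $\cP_\be$ and of $J_\be$ unchanged, the set $J^{\ell+\delta}_{\mathcal{P}_\be^+}$ is nonempty, hence of genus at least $1$. For the upper bound I would exhibit a continuous \emph{odd} map of this set into $\R\setminus\{0\}$, using the $\si$-anti-invariant functional $\theta(u,v):=\int_{\R^3}|\nabla u|^2-\int_{\R^3}|\nabla v|^2$; it then suffices to find $\delta>0$ such that $\theta$ does not vanish on $J^{\ell+\delta}_{\mathcal{P}_\be^+}$ (this also forces in particular that $\si$ has no fixed point on the sublevel set, consistently with $m_\be^\si>\ell=m_\be$, which is trivial for $\be\le-1$ since $\cP_\be^\si=\emptyset$ and for $-1<\be<0$ follows from the same argument below applied on $\cP_\be^\si$, because the only diagonal solution of \eqref{system} is $(w_\be,w_\be)$, of energy $\ell(1+\be)^{-2}>\ell$).

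So the heart of the matter is the claim: \emph{there is $\delta>0$ such that $(u,v)\in\cP_\be$, $u,v\ge0$ and $J_\be(u,v)\le\ell+\delta$ force $\|\nabla u\|_{L^2}\ne\|\nabla v\|_{L^2}$.} I argue by contradiction. If the claim fails, one finds $(u_n,v_n)\in\cP_\be$ with $u_n,v_n\ge0$, $\|\nabla u_n\|_{L^2}=\|\nabla v_n\|_{L^2}$ and $J_\be(u_n,v_n)\to\ell$; by \eqref{J on P} this sequence is bounded in $\cD^{1,2}$ with $\|\nabla u_n\|_{L^2}^2=\|\nabla v_n\|_{L^2}^2\to 3\ell$, and it is a minimizing sequence for $\inf_{\cE_\be}E_\be=m_\be=\ell$ (recall $E_\be=J_\be$ on $\cP_\be$). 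Ekeland's principle, applied to $E_\be$ on a sublevel set of $\cE_\be$ which is complete by Lemma~\ref{lem:su bordo}, provides a Palais--Smale sequence $(\tilde u_n,\tilde v_n)$ for $E_\be$ at level $\ell$ with $\|(\tilde u_n,\tilde v_n)-(u_n,v_n)\|\to0$; Proposition~\ref{rem: on s_n}, with $(w_n,z_n):=(u_n,v_n)$, then yields $s_n:=s^\be_{(\tilde u_n,\tilde v_n)}\to0$ and a Palais--Smale sequence $(\hat u_n,\hat v_n):=s_n\star(\tilde u_n,\tilde v_n)\in\cP_\be$ for $J_\be|_\cS$ at level $\ell$ with $\hat u_n^-,\hat v_n^-\to0$ a.e. Since $s_n\to0$ and $(\tilde u_n,\tilde v_n)$ is $H^1$-close to $(u_n,v_n)$, one keeps $\|\nabla\hat u_n\|_{L^2}^2-\|\nabla\hat v_n\|_{L^2}^2\to0$. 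Now Proposition~\ref{prop: PS cond}, case (b), applies at level $c=\ell$: its alternative (ii) cannot occur, because there one Dirichlet norm converges to $\|\nabla w_0\|_{L^2}^2=6\ell>0$ while the other converges to $0$, against $\|\nabla\hat u_n\|_{L^2}^2-\|\nabla\hat v_n\|_{L^2}^2\to0$. Hence alternative (i) holds, and in the limit we obtain a positive solution $(\hat u,\hat v)$ of \eqref{system} with $J_\be(\hat u,\hat v)=\ell$ and $\|\nabla\hat u\|_{L^2}=\|\nabla\hat v\|_{L^2}=\sqrt{3\ell}$.

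The remaining step — which I expect to be the genuine obstacle — is to exclude this last object: a positive normalized solution of \eqref{system} sitting at the minimal level $m_\be=\ell=\inf_{\cP_\be}J_\be$ whose two components carry equal Dirichlet energy. Such a pair is a global minimizer of $J_\be|_{\cP_\be}$, equivalently of $E_\be|_{\cE_\be}$, and $E_\be$ is invariant under the diagonal dilation $(u,v)\mapsto s\star(u,v)$; I would exploit this, together with the non-attainment of $m_\be=\ell$ and the uniqueness of the scalar ground state $w_0$ (spreading out one component towards the semi-trivial profile), to force each of $\hat u,\hat v$ to be a $\star$-rescaling of $w_0$ and then $\hat u=\hat v$, which is impossible for $\be<0$: a diagonal solution of \eqref{system} must be $(w_\be,w_\be)$ with energy $\ell(1+\be)^{-2}>\ell$, and for $\be\le-1$ no diagonal pair lies in $\cP_\be$ at all. (If this direct exclusion turns out to be delicate, the alternative is to combine the reduction above with a finer $\si$-anti-invariant functional near such exceptional points.) This contradiction supplies the required $\delta$, whence $\gamma\bigl(J^{\ell+\delta}_{\mathcal{P}_\be^+}\bigr)=1$; together with the lower bound this proves the lemma.
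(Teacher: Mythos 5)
Your overall architecture is legitimate: replacing the paper's covering of the sublevel set by two disjoint closed sets $D_1\cup\si(D_1)$ with an explicit odd map $\theta(u,v)=\int_{\R^3}|\nabla u|^2-|\nabla v|^2$ is a fine way to get $\gamma\le 1$, and your contradiction argument --- minimizing sequence with $\theta=0$, Ekeland's principle on $E_\be$, Proposition \ref{rem: on s_n} to pass to a Palais--Smale sequence for $J_\be$ in $\cP_\be$, exclusion of alternative (ii) of Proposition \ref{prop: PS cond} via the Dirichlet norms ($\|\nabla w_0\|_{L^2}^2=6\ell$ versus $0$) --- runs parallel to the paper's proof and is correct as far as it goes. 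But the decisive step, which you yourself flag as ``the genuine obstacle'', is left unproven, and the route you sketch does not work: there is no reason why a positive solution of \eqref{system} at level $\ell$ with equal Dirichlet norms should have each component a $\star$-rescaling of $w_0$, and you cannot invoke ``non-attainment of $m_\be=\ell$'', because that is precisely what has to be proved at this point (it is established nowhere earlier in the paper). Likewise, the observation about diagonal solutions $(w_\be,w_\be)$ only rules out $\hat u=\hat v$; it says nothing about a non-diagonal positive minimizer with $\|\nabla\hat u\|_{L^2}=\|\nabla\hat v\|_{L^2}$.

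The missing idea is a short monotonicity-in-$\be$ argument. Once alternative (i) gives a strong limit $(\hat u,\hat v)$, a nonnegative solution of \eqref{system} with negative Lagrange multipliers and $J_\be(\hat u,\hat v)=\ell$, the strong maximum principle yields $\hat u,\hat v>0$ in $\R^3$, hence $\int_{\R^3}\hat u^2\hat v^2>0$. Since $\be<0$, the explicit formula \eqref{expE} then gives the strict inequality
\[
  \ell \;=\; E_\be(\hat u,\hat v)\;>\;E_0(\hat u,\hat v)\;\ge\;\inf_{\cE_0}E_0\;=\;m_0\;=\;\ell,
\]
by Lemmas \ref{inf ray} and \ref{lem: inf P}, a contradiction; in other words, the level $\ell$ cannot be attained by a pair with both components nontrivial. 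Inserting this step closes your argument and yields the required $\delta$. Note that the paper actually proves the stronger dichotomy that every nonnegative pair in the sublevel set is $\eps$-close either to $(w_0,0)$ or to $(0,w_0)$ (in $H^1\times\cD^{1,2}$), from which the genus bound follows by writing the sublevel set as a disjoint union of two closed sets exchanged by $\si$; your functional $\theta$ would serve equally well once the exclusion above is available.
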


\begin{proof}
We claim that for every $\eps>0$ there exists $\delta>0$ such that $(u,v) \in \cP_\be$, $u,v \ge 0$ a.e.\ in $\R^3$ and $J_\be(u,v) \le \ell + \delta$ implies
\begin{equation}\label{cl 22 gen}
  \text{either} \quad \|u- w_0\|_{H^1} + \|v\|_{\mathcal{D}^{1,2}}  < \eps, \quad
  \text{or} \quad \|v- w_0\|_{H^1} + \|u\|_{\mathcal{D}^{1,2}} < \eps.
\end{equation}
If this claim were false, then we would find $\eps>0$ and a sequence $\{(w_n, z_n)\} \subset \cP_\be$, $w_n,z_n \ge 0$ a.e. in $\R^3$, such that $J_\be(w_n,z_n) \to \ell$ and
\begin{equation}\label{011106}
  \text{both} \quad \|w_n- w_0\|_{H^1} + \|z_n\|_{\mathcal{D}^{1,2}} \ge \eps, \quad
  \text{and} \quad \|z_n- w_0\|_{H^1} + \|w_n\|_{\mathcal{D}^{1,2}} \ge \eps.
\end{equation}
Since $\{(w_n,z_n)\} \subset \cP_\be$, we have
\[
E_\be(w_n,z_n) =J_\be(w_n,z_n) = \frac16 \|(w_n,z_n)\|_{\cD^{1,2}}^2 \to \ell,
\]
and hence $\{(w_n,z_n)\}$ is a bounded minimizing sequence for $E_\be$ on $\cE_\be$, see Lemmas \ref{inf ray} and \ref{lem: inf P}. By Ekeland's variational principle, there exists then a Palais-Smale sequence $\{(\tilde u_n,\tilde v_n)\}$ for $E_\be$ on $\cE_\be$, with the property that $\|(\tilde u_n,\tilde v_n)-(w_n,z_n)\|_{H^1} \to 0$ as $n \to \infty$. As a consequence, letting $s_n:= s_{(\tilde u_n,\tilde v_n)}$ and $(u_n,v_n) := s_n \star (\tilde u_n,\tilde v_n)$, by Proposition \ref{rem: on s_n} we have that $\{(u_n,v_n)\}$ is a Palais-Smale sequence at level $\ell$ for $J_\beta$ on $\cS$, with $(u_n,v_n) \in \cP_\be$ for every $n$, $u_n^-,v_n^- \to 0$ a.e. in $\R^3$, and $s_n \to 0$ as $n \to \infty$.

In order to describe the asymptotic behavior of $\{(u_n,v_n)\}$, we observe at first that Proposition \ref{prop: PS cond} is applicable, and hence one of the alternatives ($i$) and ($ii$) holds.

Let us prove that ($ii$) cannot occur. By \eqref{011106} and the fact that $\|(\tilde u_n,\tilde v_n)-(w_n,z_n)\|_{H^1} \to 0$, we have
\begin{equation}\label{011105}
\text{both} \quad \|\tilde u_n- w_0\|_{H^1} + \|\tilde v_n\|_{\mathcal{D}^{1,2}} \ge \frac34\eps, \quad
  \text{and} \quad \|\tilde v_n- w_0\|_{H^1} + \|\tilde u_n\|_{\mathcal{D}^{1,2}} \ge \frac34\eps.
\end{equation}
Now, if alternative ($ii$) holds, we have for instance $u_n \to w_0$ strongly in $H^1(\R^3)$ and $v_n \to 0$ strongly in $\cD^{1,2}(\R^3)$. But using the fact that $s_n \to 0$, we deduce that also $\tilde u_n = -s_n \star u_n  \to w_0$ strongly in $H^1$, and $\|\tilde v_n\|_{\cD^{1,2}} = e^{-s_n} \|v_n\|_{\D^{1,2}} \to 0$, in contradiction with \eqref{011105}.

This shows that necessarily alternative ($i$) in Proposition \ref{prop: PS cond} holds true, i.e. $(\tilde u_n,\tilde v_n) \to (u_\be,v_\be)$ strongly in $H^1(\R^3,\R^2)$, where $(u_\be, v_\be)$ is a positive solution to \eqref{system}, and achieves the minimum of $E_\be$ on $\cE_\be$. Both $u_\beta$ and $v_\beta$ are strictly positive in $\R^3$ by the strong maximum principle, and hence $\int_{\R^3} u_\beta^2 v_\beta^2 >0$. But then, recalling \eqref{expE},
\[
  \ell = E_\beta(u_\beta,v_\beta) > E_0(u_\beta,v_\beta) \ge \inf_{(u,v) \in \cE_0} E_0(u,v) = m_0 = \ell,
\]
a contradiction again. This proves the validity of claim \eqref{cl 22 gen}.

Let now $\eps>0$ so small that $\|u- w_0\|_{H^1} < \eps$ implies $\|u\|_{\mathcal{D}^{1,2}} > \eps$. The above argument shows that for any such $\eps$ there exists a small positive $\delta$ such that $J_{\mathcal{P^+}}^{\ell+\delta} \subset D$, where
\begin{align*}
  D &:= \left\{(u,v) \in \cP_\be \left|\begin{array}{l}
          \text{either} \quad \|u- w_0\|_{H^1} + \|v\|_{\mathcal{D}^{1,2}} \le \eps, \\
          \text{or} \quad \|v- w_0\|_{H^1} + \|u\|_{\mathcal{D}^{1,2}}  \le \eps \end{array} \right. \right\}.
\end{align*}
By definition, we have that $D=D_1 \cup D_2$ with
\[
  D_1:= \left\{(u,v) \in \cP_\be : \  \|u- w_0\|_{H^1} + \|v\|_{\mathcal{D}^{1,2}}  \le \eps\right\},
  \quad \quad D_2 := \sigma(D_1),
\]
and $D_1 \cap D_2 = \emptyset$ by the choice of $\eps$. Therefore, $D$ is the disjoint union of two closed sets with $D_2 = \sigma(D_1)$, which implies $\gamma(D)=1$. By the monotonicity property of the genus, point ($i$) in Lemma \ref{lem: prop gamma}, the thesis follows.
\end{proof}

\begin{proof}[Conclusion of the proof of Theorem \ref{thm: main}]
We omit the dependence of the quantities with respect to $\beta$, which is fixed throughout this proof. Since we already know that $c_k \le c_{k+1}$ for every $k \ge 1$, in order to show the validity of the theorem we can simply prove that $c_2>c_1 = \inf_{\cP} J = \ell$. By contradiction, suppose that $c_2 = c_1$. Then there exists a sequence $\{A_n\} \subset \mathcal{A}_2$ with $\sup_{A_n} J \to \ell$, and in particular $\sup_{A_n} J \le \ell+\delta$ for every $n$ sufficiently large. Let us consider the set $|A_n|$, defined in \eqref{mod set}. By point ($iii$) of Lemma \ref{lem: prop gamma}, we know that $\gamma(|A_n|) \ge \gamma(A_n) \ge 2$ for every $n$; on the other hand, observing that $J(u,v) = J(|u|, |v|)$ for every $(u,v) \in \mathcal{P}$, we deduce that $|A_n| \subset J_{\cP^+}^{\ell+\delta}$, and hence by point ($i$) of Lemma \ref{lem: prop gamma} together with Lemma \ref{lem: genus sub} we have also $\gamma(|A_n|) \le \gamma(J_{\cP^+}^{\ell+\delta}) =1$, a contradiction.

It remains to show that, for $\beta \le -\mu = -1$, we have $J(u_{k},v_{k}) \to +\infty$ as $k \to +\infty$. Let us introduce the generalized Morse index $m_\cP(u,v)$ of $J|_{\mathcal{P}}$ in a critical point $(u,v)$ as the dimension of the negative and null eigenspace of the linearized operator $d^2 J|_{\mathcal{P}}(u,v)$. Similarly we write $m(u,v)$ for the generalized Morse index of $J$ on $\cS$. Observe that these differ by at most one because $\cP$ is a codimension one submanifold of $\cS$. In fact, $m(u,v)=m_\cP(u,v)+1$, because the path $t\mapsto t*(u,v)$ is transversal to $\cP$, and $J$ achieves its maximum along the path at $t=0$. By \cite[Corollary 10.5]{Ghou}, the min-max characterization of $(u_k,v_k)$ yields an estimate on the Morse index (on the line of \cite{BahLio, So}), and to be precise we have that $m_\cP(u_{k},v_{k}) \ge k$. Now, let us assume that $c_k \to \bar c<+\infty$. Then the sequence $(u_{k},v_{k})$ is a Palais-Smale sequence for $J|_{\mathcal{P}}$ at level $\bar c > \ell$ made of positive functions. Hence, by Proposition \ref{prop: PS cond}, it is convergent to a limit $(\bar u, \bar v)$, which is a positive radial solution to \eqref{system} and has therefore finite generalized Morse index. This can be seen as follows. The gradient $\nabla_uJ:\cS\to TS_a$ with respect to the standard scalar product in $H^1$ is given by
\[
  \nabla_uJ(u,v) = (-\De+1)^{-1}\left(-\De u-u^3-\be uv^2-\la_1(u,v)u\right)
\]
where $\la_1(u,v)\in\R$ is determined by the equation $\nabla_uJ(u,v) \in T_uS_a$, i.e.\ $\int_{\R^3}u\cdot\nabla_uJ(u,v) = 0$. Similarly we obtain
\[
  \nabla_vJ(u,v) = (-\De+1)^{-1}\left(-\De v-v^3-\be u^2v-\la_2(u,v)v\right).
\]
Therefore the Hessian of $J$ at the critical point $(\bar u, \bar v)$ in the direction $(\phi,\psi)\in T_{(\bar u, \bar v)}\cS$ is computed as follows:
\[
\begin{aligned}
D^2J(\bar u,\bar v)[(\phi,\psi),(\phi,\psi)]
 &= \int_{\R^3} \left(|\nabla\phi|^2+|\nabla\psi|^2-\la_1(\bar u,\bar v)\phi^2-\la_2(\bar u,\bar v)\psi^2\right)\\
 &\hspace{1cm} -\int_{\R^3} \left((3u^2+\be v^2)\phi^2-(3v^2+\be u^2)\psi^2-4\be uv\phi\psi\right).
\end{aligned}
\]
We have used the fact that $\int_{\R^3}\bar u\phi=0=\int_{\R^3}\bar v\psi$. Here $\la_1(\bar u,\bar v),\la_2(\bar u,\bar v)$ are the Lagrange multipliers of the solution $(\bar u, \bar v)$, hence they are negative. Therefore the first integral above is strictly positive definite, whereas the second integral defines a quadratic form on $T_{(\bar u, \bar v)}\cS \subset H^1_{\mathrm{rad}}\times H^1_{\mathrm{rad}}$ which is even defined, and continuous, on $L^4\times L^4$. Since the embedding of $H^1_{\mathrm{rad}}(\R^3)$ into $L^4(\R^3)$ is compact, the negative eigenspace and the kernel of $D^2J(\bar u,\bar v)$ must be finite-dimensional, and  $D^2J(\bar u,\bar v)$ is strictly positive definite on a subspace $X^+\subset T_{(\bar u, \bar v)}\cS$ of finite codimension, i.e.\
$D^2J(\bar u,\bar v)[(\phi,\psi),(\phi,\psi)] \ge c\|(\phi,\psi)\|^2$ for some $c>0$, and all $(\phi,\psi)\in X^+$. This, however, contradicts the fact that $m(u_k,v_k) \to +\infty$, and completes the proof.
\end{proof}

\begin{proof}[Proof of Theorem \ref{thm: phase sep}]
We can proceed exactly as in Section 3.4 in \cite{BaSo}.
\end{proof}

\end{document}